\numberwithin{equation}{section}
\newcommand{\cG}{\mathcal{G}_{\Phi^o}}
\newcommand{\cH}{\mathcal{H}}
\newcommand{\cK}{\mathcal{A}_c}
\newcommand{\cM}{{\mathcal M}}
\newcommand{\N}{\mathbb{N}}
\newcommand{\R}{\mathbb{R}}
\renewcommand{\S}{\mathbb{S}}
\newcommand{\doublevariable}{z}
\newcommand{\An}{\widehat A}
\newcommand{\Bn}{\widehat B}
\newcommand{\En}{\widehat E}
\renewcommand{\epsilon}{\varepsilon}
\newcommand{\Fn}{\widehat F}
\newcommand{\sg}{\mathrm{sg}}
\newcommand{\oic} {[0,+\infty)}
\newcommand{\opensetRn}{\widehat \Omega}
\newcommand{\opensetRnplusone}{\Omega}
\newcommand{\bopensetRn}{\An}
\newcommand{\bopensetRnplusone}{A}
\newcommand{\genericopenset}{O}
\newcommand{\hsectangent}{{\widehat \xi}}
\newcommand{\hsecdual}{{\widehat\xi}^*}
\newcommand{\direction}{\zeta}
\newcommand{\level}{\lambda}
\newcommand{\loc}{\mathrm{loc}}
\newcommand{\Rn}{\mathbb{R}^n}
\newcommand{\norm}{\Psi}
\newcommand{\phisec} {\varphi}
\newcommand{\strictlyincluded}{\subset\subset}
\newcommand{\test}{\psi}
\newcommand{\unitballPhi} {B_\Phi}
\newcommand{\unitballPsi} {B_\Psi}
\newcommand{\supp}{{\ensuremath{\mathrm{supp\,}}}}
\newcommand{\p}{\partial}
\newcommand{\esssup}{\mathop{\mathrm{ess\, sup}}}
\renewcommand{\div}{\mathop{\mathrm{div}}}
\newcommand{\essinf}{\mathop{\mathrm{ess\, inf}}}
\newcommand{\sign}{\mathop{\mathrm{sign}}}
\theoremstyle{plain}
\newtheorem{theorem}{Theorem}[section]
\newtheorem{lemma}[theorem]{Lemma}
\newtheorem{proposition}[theorem]{Proposition}
\newtheorem{corollary}[theorem]{Corollary}
\theoremstyle{definition}
\newtheorem{example}[theorem]{Example}
\newtheorem{definition}[theorem]{Definition}
\newtheorem{remark}[theorem]{Remark}
\date{\today}
\begin{document}

\title[Minimizers of anisotropic perimeters with cylindrical norms]
{Minimizers of anisotropic perimeters with cylindrical norms}

\author[G. Bellettini, M. Novaga, Sh. Yu. Kholmatov]{G. Bellettini$\!^1$,
 M. Novaga$\!^2$, Sh. Yu. Kholmatov$\!^{3,4}$}

\begin{abstract}
We study various  regularity properties of 
minimizers of the $\Phi$--perimeter, where $\Phi$ is a norm. 
 Under suitable assumptions on $\Phi$ and on
the dimension of the ambient space, we prove that the boundary of a 
cartesian minimizer  is locally a Lipschitz graph out of a
closed singular set of small Hausdorff dimension. Moreover,
we show the following anisotropic Bernstein-type result:
any entire cartesian minimizer 
is the subgraph of a monotone function depending only on one variable.
\end{abstract}

\keywords{Non parametric minimal surfaces,
anisotropy,  sets of finite perimeter, minimal cones, anisotropic Bernstein
problem}

\address{$^1\!$Dipartimento di Matematica, Universit\`a di 
Roma ``Tor Vergata'',
Via della Ricerca Scientifica 1, 00133 Roma, Italy}%

\email{$^1\!$belletti@mat.uniroma2.it}

\address{$^2\!$Dipartimento di Matematica, Universit\`a di Pisa,
Largo Bruno Pontecorvo 5, 56127 Pisa, Italy}

\email{$^2\!$novaga@dm.unipi.it}

\address{$^3\!$International Centre for Theoretical Physics (ICTP),
Viale Miramare 11, 34156 Trieste, Italy}

\address{$\!^4\!$Scuola Internazionale Superiore di
Studi Avanzati (SISSA),
Via Bonomea 265, 34136 Trieste, Italy}

\email{$^{3,4}\!$sholmat@sissa.it}

\maketitle

\tableofcontents

\section{Introduction}\label{sec:introduction}
In this paper we are interested in regularity
properties of  minimizers
of the anisotropic perimeter
$$
P_\Phi(E,\Omega) = \int_{\Omega\cap \partial^* E} \Phi^o(\nu_E)~d\mathcal H^{n},
$$
of $E$ in $\Omega,$ and of the related area-type functional
$$
\cG(v, \opensetRn) = \int_{\opensetRn} \Phi^o(-Dv,1).
$$
 Here $\Omega \subseteq \R^{n+1}$  is an open set,
$\Phi: \R^{n+1} \to [0,+\infty)$ is a norm
 (called anisotropy), $\Phi^o$ is its dual, $E\subset \R^{n+1}$ is a set of locally finite perimeter,
$\partial^*E$ is its reduced boundary,
$\nu_E$ is the outward (generalized) unit normal to $\partial^* E,$
and $\mathcal H^{n}$ is the $n$-dimensional
Hausdorff measure in $\R^{n+1}.$ On the other hand, $\widehat \Omega \subseteq \Rn,$  
$v$ belongs to
the space   $BV_{\rm loc}(\widehat \Omega)$ of functions with locally bounded total variation in 
$\widehat \Omega,$ and $Dv$ is the 
distributional derivative of $v.$  When $\Omega=\widehat\Omega\times\R$ the two functionals
coincide provided $E$ is cartesian, {\it i.e.} 
 $E$ is the subgraph $\sg(v) \subset \widehat \Omega
\times \R$ of the function $v\in BV_\loc(\widehat \Omega)$
(see \eqref{minP}).

Anisotropic perimeters appear in many models in material science 
and phase transitions \cite{Gur:93, Tay:78},
in crystal growth \cite{BNP:2001, BCCN:05, CH:1972, CH:1974, TC:86, AT:1995},
and in boundary detection
and tracking \cite{CKSS:97}.
Functionals like $\mathcal G_{\Phi^o},$ 
having  linear growth in the gradient, 
 appear quite frequently
in calculus of variations \cite{Gi:84, BPV:96, AFP:00}. The one-homogeneous case is particularly
relevant, since it is related to the 
{\it anisotropic total variation functional}
\begin{equation}\label{eq:TVphisec}
TV_{\phisec}(v,\opensetRn) = \int_{\opensetRn} \phisec^o(Dv),
\end{equation}
a useful functional appearing, for example,
 in  image reconstruction and
denoising \cite{ROF92, Cham:04, Cham:09, AmBe:, Moll:05}. Here $\phisec: \Rn \to [0,+\infty)$ 
is a norm, and its dual $\phisec^o$ is typically the restriction 
of $\Phi^o$ on the ``horizontal'' $\R^n.$

Minimizers  of $P_\Phi$ have been widely studied 
\cite{Tay:78,ATW:95}; in particular, it is known 
\cite{Bom:82,AlScYa:77} 
that  if $\Phi^2$ is smooth and uniformly convex, 
(boundaries of) minimizers are smooth out of a ``small'' closed singular set.
In contrast to the classical case, where perimeter minimizers are 
smooth  out of
a closed set of Hausdorff dimension at most $n-7,$ 
the behaviour of minimizers of anisotropic perimeters is more irregular: for instance, there exist 
singular minimizing cones even for smooth and uniformly convex anisotropies in  $\R^4$
\cite{Mo:}.  Referring to functionals of the form \eqref{eq:TVphisec}, 
we recall that, if
$n\le 7,$  H\"older continuity of minimizers 
for the 
image denoising functional  \cite{ROF92}, consisting of the Euclidean total
variation $TV$ plus the usual quadratic
fidelity term, has been studied in \cite{VChN:11}.
In \cite{Me:15} such result is extended to the anisotropic total variation $TV_\phisec$.

One of the remarkable results in the classical theory of minimal surfaces is the classification of 
entire minimizers of the Euclidean  perimeter $P$: 
if $n\le 6$ the only entire minimizers 
are hyperplanes, while for $n=7$ there are nonlinear entire
minimizers (see for instance \cite[Chapter 17]{Gi:84} and references therein);
in the cartesian case (sometimes called the non parametric case),
this is the  well-known Bernstein problem. 
In the anisotropic setting, to our best knowledge, only a few results are available:
entire minimizers in $\R^2$ are classified  in \cite{NoPa:05}, and 
minimizing cones in $\R^3$ for crystalline 
anisotropies are classified in \cite{Tay:86}. 
In \cite{Je:61,Si:77} the authors show that if $n\le 2$ and $\Phi^2$ is smooth,
the only 
 entire cartesian minimizers are the subgraphs of linear functions
(anisotropic Bernstein problem), and the same result holds up to dimension 
$n\le 6$ if $\Phi$ is close enough to the Euclidean norm \cite{Si:77}.
However, the anisotropic Bernstein problem seems to be still open
in dimensions $4\le n\le 6,$ even for smooth and uniformly convex norms
(see \cite{Ovvo:14} for recent results in this direction).

The above  discussion shows the difficulty of describing perimeter
minimizers in the presence of an anisotropy; it seems therefore rather natural
to look for reasonable assumptions on $\Phi$ that allow
to simplify the classification problem. A possible
requirement, which will be often (but not always)
assumed in the sequel of the paper,
is that $\Phi$ is cylindrical over $\phisec,$ {\it i.e.}
\begin{equation}\label{eq:cyl}
\Phi(\hsectangent,\xi_{n+1}) = \max\{\phisec(\hsectangent),|\xi_{n+1}|\},
\quad (\hsectangent,\xi_{n+1})\in\R^{n+1}.
\end{equation}
Despite its splitted expression, 
a cylindrical anisotropy  is neither smooth nor
strictly convex, and this still makes the above mentioned 
classification rather complicated. 
For instance, 
in Examples \ref{ex:minEx} and \ref{ex:Union of cones} 
we show that there  exist 
singular cones minimizing $P_\Phi$ in any dimension $n\ge1.$
Moreover, while 
it can be proved that if
horizontal and vertical sections of $E$ 
are minimizers of $P_\phisec$ and $P$ 
respectively
then $E$ is a minimizer of $P_\Phi$  (Remark \ref{rem:minimal_sections}),
in general sections of a minimizer of $P_\Phi$ 
need not satisfy   this minimality property
(Examples \ref{ex:2dim_strip} and \ref{ex:Union of cones}). 

These phenomena lead us to investigate the classification problem under 
some  simplifying assumptions on the structure of  minimizers. We shall consider
two cases: cylindrical minimizers
(Definition \ref{def:cylindric_min}), and cartesian minimizers
(Definition \ref{def:cartesian_min}), the latter
being our main interest.
Cylindrical minimizers of $P_\Phi$ are studied in 
Section \ref{sec:cylindrical_minimizers}:
in particular, in Example \ref{ex:char.cylind.min} 
we classify all cylindrical minimizers of $P_\Phi$ when $n=2$ and the unit ball $B_\Phi$ 
of $\Phi$ (sometimes called Wulff shape) is a 
cube. Cartesian minimizers are studied in Sections 
\ref{sec:cartesian_minimizers}, 
\ref{sec:cartesian_minimizers_for_cylindrical_norms} and \ref{sec: LipRegCarMin}.
In Section
\ref{sec:cartesian_minimizers} we investigate
the relationships between cartesian minimizers of $P_\Phi$ 
and minimizers of $\cG,$
provided $\Phi$ is partially monotone (Definition \ref{def:partial_monotone_norm}). 
In Theorem \ref{teo:Subgraphs} we show that 
the subgraph of a minimizer of $\cG$ 
is also a minimizer of $P_\Phi$  among all perturbations not preserving the 
cartesian structure.
In particular, for $\Phi$ satisfying \eqref{eq:cyl} the subgraph $E$ of some 
function $u:\opensetRn\to\R$
is a cartesian minimizer of $P_\Phi$ in $\opensetRn\times\R$ 
if and only if $u$ is a minimizer of 
$TV_\phisec.$

Sections 
\ref{sec:cartesian_minimizers_for_cylindrical_norms} and \ref{sec: LipRegCarMin} contain
our main results,
 valid under the assumptions that  
$$
\Phi \,\,\text{is  cylindrical  over   $\phisec$ and }\,\, E  {\rm  ~is~ cartesian}.
$$
In Theorem \ref{teo:Euc} (see also Corollary \ref{cor:char.min.cylindrical}) 
we prove the following 
Bernstein-type classification result: {\it if 
eiter $n\le 7$ and $\phisec$ is Euclidean, or if $n=2$ and 
$\phisec^o$ is strictly convex,  
then any entire cartesian  minimizer of $P_\Phi$ in $\R^{n+1}$
({\it i.e.} the subgraph of a minimizer of 
$TV_\phisec$)  is the subgraph of the 
composition of a monotone function on $\R$ with a linear function on $\R^n.$}
We notice that this result is sharp: if $n=8,$ there are entire cartesian minimizers
of $P$ in $\R^9$ which cannot be represented as the subgraph of the
composition of a  monotone and a linear function (see Remark \ref{rem:opt_Euc}).

In view of our assumptions, 
also the regularity results of Section \ref{sec: LipRegCarMin} are concerned with
 the anisotropic total variation 
functional. For our purposes, 
it is useful
to  remark that, even if the anisotropy $\phisec$ 
is smooth and uniformly convex, in general minimizers of $TV_\phisec$
are not necessarily continuous. In contrast, we remark that minimizers of $TV_\phisec$
with continuous boundary data on bounded domains are continuous, 
see \cite{Je:15, Me:15, Ma:16}. 
Nevertheless, in Theorems \ref{teo: reg_in_all_dim} and \ref{teo:Al} 
we show that, {\it if 
$\phisec^2\in C^3$ is uniformly convex,  then the boundary of the subgraph 
of a minimizer of $TV_\phisec$ is locally Lipschitz (that is, locally a Lipschitz graph)
out of a closed singular set with a suitable 
 Hausdorff dimension depending on $\phisec$}. 
As observed in 
Remark \ref{rem:opt_Bernstein}, 
for $\phisec$ Euclidean these statements  are optimal,
while the statement is false already in dimension $n=2$ for $\phisec$ the square norm.

\section{Notation and preliminaries}\label{sec:notation_and_preliminaries}

\jot=10pt

In what follows $n \geq 1$,
$\opensetRnplusone\subseteq \R^{n+1}$ and 
$\opensetRn \subseteq \Rn$ are open sets.
$BV(\opensetRnplusone)$ (resp. $BV_{{\rm loc}}(\opensetRnplusone)$)
stands for the space of functions with bounded (resp. locally bounded)
variation in $\opensetRnplusone$ \cite{AFP:00}.
The characteristic function of a (measurable) set $E\subset\Omega$ is denoted by
$\chi_E;$ we write $E \in BV(\opensetRnplusone)$ (resp. $E \in 
BV_{{\rm loc}}(\opensetRnplusone)$) when $\chi_E \in BV(\opensetRnplusone)$ (resp.
$\chi_E \in BV_{{\rm loc}}(\opensetRnplusone)$). Similar notation holds
in $\opensetRn.$
$P(E,A)$ denotes the Euclidean perimeter of the set $E$ in the open 
set $A.$ Recall  that the  perimeter of $E\in BV_\loc(\opensetRnplusone)$ does not change if
we change $E$ into another set in the same Lebesgue equivalence class;
henceforth we shall always assume
that any set $E$ coincides with its points of density one \cite{AFP:00, Fed:69}.
The outward generalized unit normal to the reduced boundary $\partial^*E$ of $E \in 
BV_{{\rm loc}}(\Omega)$ is denoted by $\nu_E.$
We often use the splitting
$\R^{n+1} = \{(x,t):\,\,x\in\R^n,\,t\in\R\}$
and write $\nu_E = (\widehat \nu_E, (\nu_E)_t)$ and 
$e_{n+1} = (0,\dots,0,1).$
If  $F\subseteq \R^{n+1},$  $x\in\R^n,$ $t\in\R$ we let
\begin{equation}\label{eq:Sections}
F_t:=\{y\in\R^n:\quad (y,t)\in F\},\quad 
F_x:=\{s\in\R:\quad (x,s)\in F\}. 
\end{equation}
Unless otherwise specified, in the sequel
we take $m\in\{n,n+1\}$.

$\cK(\opensetRnplusone)$ is the collection of all open
relatively compact subsets of $\opensetRnplusone.$  
The sequence $\{E_h\}$ of 
 subsets of $\R^m$ 
converges to set $E\subset\R^m$ in $L_\loc^1(\opensetRnplusone)$ if
$\chi_{E_h}\to\chi_E$ as $h\to+\infty$ in $L^1(A)$ 
for any $A\in\cK(\opensetRnplusone).$

Finally,
for a function  $u : \opensetRn \to \R$ we let
$$\sg(u):=\{(x,t)\in\R^{n+1}:\,\,x\in\opensetRn,\,\,u(x) > t\}$$
be the subgraph of $u.$
We recall \cite{Gi:84, MM1964}
that
\begin{equation}\label{impassump}
P_\Phi(\sg(u), \An\times \R) <+\infty \qquad \forall \An\in\cK(\opensetRn).
\end{equation}
if and only if $u\in BV_{\loc}(\opensetRn).$ 
%

\subsection{Norms}\label{subsec:norms}
A norm on $\R^m$ 
is a convex function $\norm : \R^m \to [0,+\infty)$ satisfying 
$\norm(\lambda \xi) = |\lambda|\norm(\xi)$ for all $\lambda>0$ and $\xi\in\R^m,$
and for which 
there exists a constant $c>0$ such that
\begin{equation}\label{comparison}
 c |\xi| \le \norm(\xi), \qquad 
\xi\in\R^m.
\end{equation}

We let
$\unitballPsi := \{\xi \in \R^m : \norm(\xi)\leq 1\},$
which is sometimes called Wulff shape,
and   $\norm^o: (\R^m)^* \to \oic$ the dual norm of $\norm,$ 
$$\norm^o(\xi^*) = \sup\{ \xi^*\cdot\xi:\,\,\xi\in B_\norm\},
\qquad 
\xi^* \in (\R^m)^*,
$$
where $(\R^m)^*$ is the dual of $\R^m,$ and $\cdot$ is the Euclidean
scalar product.
We have
\begin{equation}\label{duality}
\xi^*\cdot \xi \le \norm^o(\xi^*)\norm(\xi),\quad \xi^*\in(\R^m)^*,\,\,\xi\in\R^m,
\end{equation}
and $\Psi^{oo}=\Psi.$
When $m=n+1$ we often split $\xi \in \R^{n+1}$ as $\xi = (\hsectangent, \xi_{n+1}) \in \Rn \times
\R,$ and employ the symbol $\Phi$ (resp. $\phisec$) to denote a norm
in $\R^{n+1}$ (resp. in $\Rn$). In $\R^{n+1}$ we
frequently exploit the restriction
$\Phi_{\vert_{\{\xi_{n+1} =0\}}}^{}$ of  $\Phi$ 
to the horizontal hyperplane $\{\xi_{n+1}=0\},$ 
which is a norm on $\R^n.$
Note that 
\begin{equation}\label{restHorPl}
\left(\Phi_{\vert_{\{\xi_{n+1} =0\}} }^{} \right)^o 
\le \Phi_{\vert_{\{\xi_{n+1}^* =0\}}}^o.
\end{equation}
Indeed, let 
$$\phisec:=\Phi_{ \vert_{\{\xi_{n+1} =0\}} }^{} \,\,\, \text{and}\,\,\, 
\phi:=\left(\Phi_{ \vert_{ \{\xi_{n+1}^* =0\} } }^o\right)^o.$$
Fix $\hsecdual\in\R^n$ and choose $\hsectangent\in\R^n$ such that 
$\phisec(\hsectangent)=\Phi(\hsectangent,0)=1$ and
$\phisec^o(\hsecdual)=\hsectangent\cdot \hsecdual.$ Thus,
$$\phisec^o(\hsecdual)=(\hsectangent,0)\cdot 
(\hsecdual,0)\le \Phi^o(\hsecdual,0)=\phi^o(\widehat{\xi^*}). $$ 

\begin{remark}\label{rem:StrInq}
Inequality  \eqref{restHorPl} may be strict. 
For 
$\alpha\in(0,\pi/2)$ 
consider
the symmetric parallelogram with vertices 
at $(1\pm\cot\alpha,\pm1),$ $(-1\pm\cot\alpha,\pm1),$ and let
 $\Phi_\alpha$ be the Minkowski
functional of $P_\alpha.$
Notice that
$$(\Phi_\alpha)_{\vert_{\{\xi_2 =0\}} }^{}(\xi_1) = |\xi_1|$$ and
$$  (\Phi_\alpha^o)_{\vert_{\{\xi_2^* =0\}}}^{}(1) = \Phi_\alpha^o(1,0) = 
\sup\{\xi_1:\,\,(\xi_1,\xi_2)\in P_\alpha\} = 1+\cot\alpha,$$
thus
$$
\left((\Phi_\alpha)_{\vert_{\{\xi_2 =0\}}}^{}\right)^o(1) = 1<
1+\cot\alpha= (\Phi_\alpha^o)_{\vert_{\xi_2^* =0}}^{}(1).
$$
In Lemma \ref{lem:CommutRestAdj} we give necessary and 
sufficient conditions on $\Phi$ 
ensuring that equality 
in \eqref{restHorPl} holds. 
\end{remark}

\begin{definition}[\textbf{Cylindrical and conical norms}]\label{def:cylindrical_norm}
We say that the norm $\Phi: \R^{n+1} \to [0,+\infty)$
is {\it cylindrical} over $\phisec$ if
\begin{equation}\label{eq:phi_dual_conical}
\Phi(\hsectangent,\xi_{n+1}) = \max\{\phisec( \hsectangent ),|\xi_{n+1}|\},
\qquad (\hsectangent,\xi_{n+1}) \in \R^{n+1},
\end{equation}
where $\phisec: \R^n \to \oic$ is a  norm.
We say that $\Phi: \R^{n+1} \to \oic$
is {\it conical} over $\phisec,$ if 
\begin{equation*}
\Phi(\xi) = \phisec(\hsectangent) + \vert \xi_{n+1}\vert, 
\qquad (\hsectangent,\xi_{n+1}) \in \R^{n+1}.
\end{equation*}
\end{definition}
Notice that if $\Phi$ is cylindrical over $\phisec$ then
$\Phi^o$ is conical over $\phisec^o,$ and vice-versa.

\subsection{Perimeters}\label{subsec:perimeter}
Let $\norm: \R^m \to \oic$ be a norm  and
 $\genericopenset \subseteq \R^m$
be an open set. For any $E\in BV_{{\rm loc}}(\genericopenset)$ 
and for any $\bopensetRnplusone \in \cK(\genericopenset)$ we define \cite{AmBe:}
 the $\norm$-perimeter of $E$ in $\bopensetRnplusone$ as
$$P_\norm(E,\bopensetRnplusone):=\int_{\bopensetRnplusone}
\norm^o(D\chi_E) =
\sup\left\{-\int_E\div \eta \,dx:\,\, \eta\in C_c^1(\bopensetRnplusone, B_\norm)\right\}.
$$
It is known \cite{AmBe:} that
\begin{equation}\label{perii}
P_\norm(E,\bopensetRnplusone)=
\int_{\bopensetRnplusone\cap \partial^*E}
\norm^o(\nu_E)\,d\cH^{m-1}.
\end{equation}

\begin{definition}[\textbf{Minimizer of anisotropic perimeter}]\label{def:local_minimizer_of_F}
 We say that $E\in BV_{{\rm loc}}(\genericopenset)$
is a {\it minimizer}
 of $P_\norm$ {\it  by compact perturbations
in} $\genericopenset$ (briefly, a {\it minimizer of} $P_\norm$ {\it in} $\genericopenset)$
if
\begin{equation}\label{eq:local_minimizer}
P_\norm(E,\bopensetRnplusone)\le P_\norm(F,\bopensetRnplusone)
\end{equation}
for any $\bopensetRnplusone\in\cK(\genericopenset)$
and $F\in BV_{{\rm loc}}(\genericopenset)$
such that
$E\Delta F\strictlyincluded\bopensetRnplusone.$
\end{definition}

From \eqref{perii} it follows that if  $E$ is minimizer
of $P_\norm$ in $\genericopenset,$ then   so is $\R^m\setminus E.$ If $m=1,$ then 
$\Phi(\xi) = \Phi(1)|\xi|,$ 
thus $E\subset\R$ is a minimizer of $P_\norm$ in an open interval $I$
if and only if it is a minimizer of the Euclidean perimeter,
so $E$ is of the form
\begin{equation}\label{eq:1_dimensional_minimizers}
\emptyset,\quad I,\quad (-\infty,\lambda)\cap I,
\quad(\lambda,+\infty)\cap I, \qquad \lambda\in I. 
\end{equation}

The following example is  
based on a standard calibration argument\footnote{See 
for instance \cite{ABD03} for some definitions, results and references
 concerning
calibrations.}.

\begin{example}[\textbf{Half-spaces}]\label{ex:half-spaces}
\rm
 Let $H\subset\R^m$ be a half-space and $\genericopenset \subseteq \R^m$
be open.
Then $E=H\cap \genericopenset$ 
is a minimizer of $P_\norm$ in $\genericopenset.$
Indeed, let $\direction\in\R^m$ be
such that $\norm(\direction)=1$ and $\nu_H^{}\cdot \direction =\norm^o(\nu_H^{}).$
Consider $F \in BV_{\rm loc}(O)$
with  $E\Delta F\strictlyincluded A\strictlyincluded\genericopenset.$
Observe that
$\partial^*(E\setminus F)$ can be written as a pairwise 
disjoint\footnote{Up to sets of 
zero $\cH^{m-1}$-measure.}
 union 
of  $(\R^m\setminus F) \cap \partial E,$
$E \cap \partial^* F$ and  $J := \{\doublevariable \in \partial E 
\cap \partial^* F : \nu_H^{}(\doublevariable)
 = -\nu_F^{}(\doublevariable)\}$ (see for example \cite[Theorem 16.3]{Maggi}).
For the vector field
$N:\R^m\to \R^m$ constantly equal to $\direction,$
we have
\begin{equation}\label{equai1}
\begin{aligned}
0=&\int_{E\setminus F}
\div N ~d\doublevariable
\\
=& \int_{(\R^m\setminus F)\cap A \cap \p E}
\nu_H^{}\cdot N\,d\cH^{m-1} -\int_{E
\cap A \cap \p^*  F}
\nu_F^{}\cdot N\,d\cH^{m-1} + \int_{J\cap A} \nu_H^{}\cdot N\,d\cH^{m-1}\\
=:& {\rm I}-{\rm II} + {\rm III}.
\end{aligned} 
\end{equation}
Similarly,
\begin{equation}\label{equai2}
\begin{aligned}
0=&\int_{F\setminus E} \div (-N) ~d\doublevariable
\\
=& -\int_{
(\R^m\setminus E)\cap A \cap  \partial^* F}
\nu_F^{}\cdot N\,d\cH^{m-1} +\int_{F \cap A \cap \partial E}
\nu_H^{}\cdot N\,d\cH^{m-1} - \int_{J\cap A} \nu_F^{}\cdot N\,d\cH^{m-1}\\
=:& -{\rm IV} +{\rm V}- {\rm VI}.
\end{aligned}
\end{equation}
Adding \eqref{equai1}-\eqref{equai2} and using $\nu_F^{} \cdot N \leq \norm^o(\nu_F^{})$ we obtain 
\begin{align*}
& \int_{A \cap \partial E} \norm^o(\nu_H)~d\mathcal H^{m-1} = 
\int_{A \cap \partial E} \nu_H^{} \cdot N~d\mathcal H^{m-1}
= {\rm I}+ {\rm III} +{\rm V} 
\\
=& {\rm II}+ {\rm IV} +{\rm VI}= \int_{A \cap \partial^* F} \nu_F^{} \cdot N~d\mathcal H^{m-1}
\le  \int_{A \cap \partial^* F} \norm^o(\nu_F^{})~d\mathcal H^{m-1}.
\end{align*}
\end{example}
\bigskip

The previous argument does not apply to a strip between two parallel planes.

\begin{example}[\textbf{Parallel planes}]\label{ex:parallel_planes}\rm
Let $n=2,$ and let $\Phi:\R^3\to\oic$ be cylindrical over the Euclidean
norm.
Given $a<b$ consider
$E = \{(x,t) \in \R^3:\, a < t < b\}.$
Then $E$ is not a
minimizer of $P_\Phi$
in $\R^3.$ Indeed, it is sufficient to compare $E$ with the set
$E \setminus C,$
obtained from $E$ by removing a  sufficiently large cylinder
$C = B_R \times [a,b]$ homothetic
to $\unitballPhi,$ where $B_R  = \{x\in \R^2 :\, \vert x\vert <R\}.$
Then $P_\Phi(E)$ is reduced by $2 \pi R^2$ (the sum of the 
areas of the top and bottom
facets of $C$), while it is increased by the lateral
area $2 \pi(b-a)R$
of $C.$
Hence, for $R>0$ sufficiently large,
\eqref{eq:local_minimizer} is not satisfied.
Notice that the horizontal sections of $E$ are
either empty or
a plane, which both are  minimizers of the Euclidean perimeter
in $\R^2.$
\end{example}

\begin{remark}\label{rem:minimal_sections}
Suppose that   $\Phi:\R^{n+1}\to\oic$ 
is cylindrical over $\phisec.$
Assume that $E\in BV_\loc(\opensetRnplusone)$ has the following property:
for almost every  $t\in\R$ the set $E_t$
(horizontal section)
is a  minimizer of $P_{\phisec}$
in $\opensetRnplusone_t$ and for almost every
 $x\in\R^n$ the set $E_x$
(vertical section)
is a minimizer of Euclidean perimeter
in $\opensetRnplusone_x.$
Then  by Remark \ref{rem:miranda}
we get that $E$ is a  minimizer of $P_\Phi$
in $\opensetRnplusone.$
\end{remark}

\begin{example}\label{ex:minEx}
For any $l,\gamma\in\R$ we define  
the cones\footnote{A set $E\subseteq \R^m$ 
is a {\it cone } if there exists $x_0\in\p E$ such that for any 
$x\in E$ and $\lambda>0$ it holds $x_0+\lambda (x-x_0)\in E.$} in $\R^{n+1}$
\begin{equation}\label{mincones}
C_1^{(n)}(l,\gamma) := (-\infty, l)\times \R^{n-1}\times(\gamma,+\infty),\quad
C_2^{(n)}(l,\gamma) := (l,+\infty)\times\R^{n-1}\times(-\infty,\gamma). 
\end{equation}
From Example \ref{ex:half-spaces}
and Remark \ref{rem:minimal_sections} it follows that
the following sets are minimizers of $P_\Phi$ in $\R^{n+1}$
provided that $\Phi$ satisfies \eqref{eq:phi_dual_conical}:
\begin{itemize}
\item[a)] $C_1^{(n)}(l_1,\gamma_1)\cup C_2^{(n)}(l_2,\gamma_2)\subset\R^{n+1},$
where $l_1\le l_2,$ $\gamma_1\ge\gamma_2$ (see Figure \ref{doublecone}).
\begin{figure}[ht]
\includegraphics[scale=0.4]{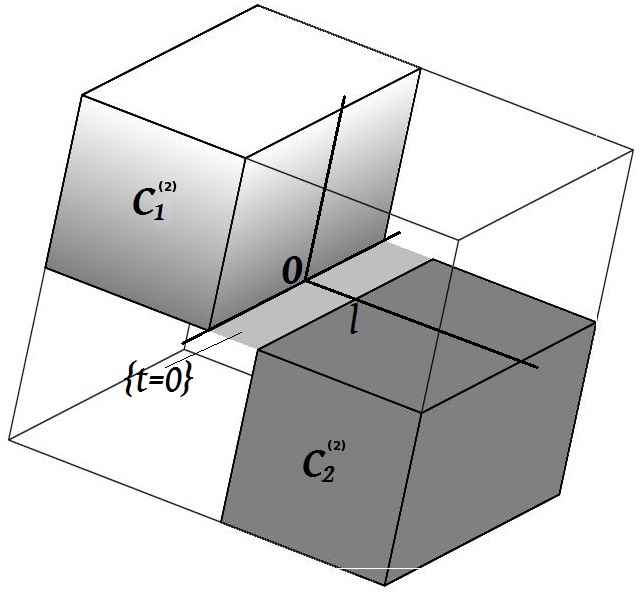}\qquad 
\includegraphics[scale=0.4]{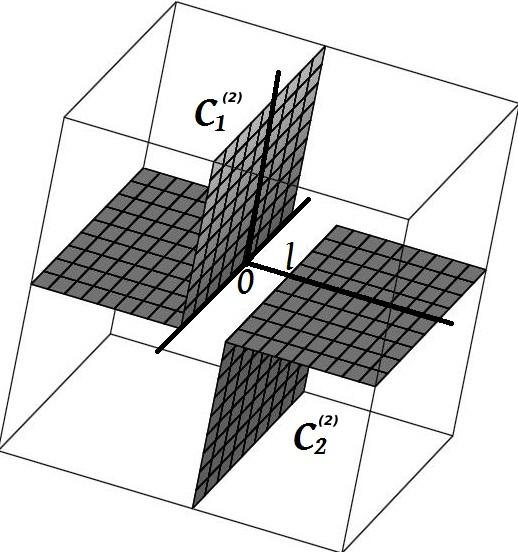}
\caption{$C_1^{(2)}(0,0)\cup C_2^{(2)}(l,0)$ with $l>0$ in Example \ref{ex:minEx}(a) 
and its boundary. The right picture is a slight rotation of left picture.}
\label{doublecone}
\end{figure}
\item[b)] The union of  $C_1^{(n)}(l_1,\gamma)$ and the rotation of 
$C_2^{(n)}(l_2,\gamma)$ around the 
vertical  axis $x_{n+1}$ of $\alpha$ radiants (see Figure \ref{U_Cones}).
 \begin{figure}[ht]
 \includegraphics[scale=0.35]{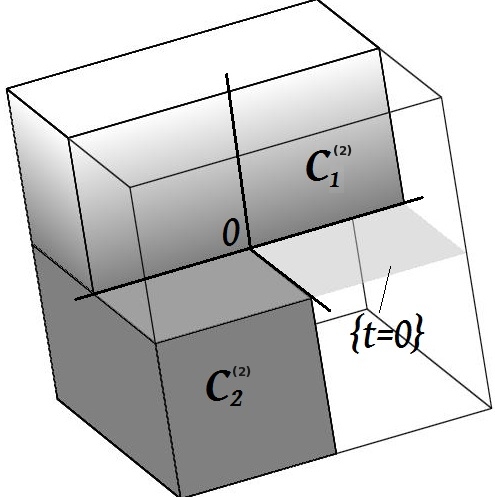}
\caption{Union $C$ of $C_1^{(2)}(0,0)$ and the 
$(-\pi/2)$-rotation of $C_2^{(2)}(0,0)$ in Example \ref{ex:minEx}(b).
Notice that $C_t$ for $t=0$ is not a minimizer of the 
Euclidean perimeter in $\R^2$; 
however, this does not affect the minimality of $C.$}\label{U_Cones}
 \end{figure}

 \end{itemize}
\end{example}

In general, a minimizer of $P_\Phi$ in $\opensetRnplusone$ for a 
cylindrical $\Phi,$ 
need {\it not}  satisfy the minimality property of  horizontal
sections  in Remark \ref{rem:minimal_sections}. 

\begin{example}[\textbf{Strips}]\label{ex:2dim_strip} Let $n=2,$ and let 
$\opensetRn=\R\times (0,\gamma)\subset\R^2$ with $\gamma>0.$ Take 
$\phisec^o(\xi_1^*,\xi_2^*) = |\xi_1^*|+|\xi_2^*|,$ so that 
$$ 
P_\phisec(\Fn,\An) = \int_{\An} |D_{x_1}\chi_{\Fn}|+ \int_{\An} 
|D_{x_2}\chi_{\Fn}|, \qquad \Fn\in BV_\loc(\opensetRn), 
\ \An\in\cK(\opensetRn).
$$

We prove that if $l>\gamma>0$ then the rectangle $\En = (0,l) \times (0,\gamma)$
is a minimizer of $P_\phisec$ in the strip $\opensetRn.$
Let  $\Fn\in BV_\loc(\opensetRn)$ 
be such that $\En\Delta \Fn\strictlyincluded \An \strictlyincluded \opensetRn.$
Let $L_{x_1}$ stand for the vertical line passing through $(x_1,0).$
If $\cH^1(\Fn\cap L_{x_1})=0$ or
$\cH^1(\Fn\cap L_{x_1})=\gamma$ for some
$0<x_1<l,$ then
$$P_\phisec(\Fn,\An) = P_\phisec(\Fn, \An\cap[(-\infty,x_1)\times(0,\gamma)])+
P_\phisec(\Fn, \An\cap[(x_1,+\infty)\times(0,\gamma)]).$$
Hence 
$$P_\phisec(\Fn, \An\cap[(-\infty,x_1)\times(0,\gamma)])\ge P_\phisec(\En, \An\cap[(-\infty,x_1)\times(0,\gamma)]),
$$
$$P_\phisec(\Fn, \An\cap[(x_1,+\infty)\times(0,\gamma)])\ge P_\phisec(\En, \An\cap[(x_1,+\infty)\times(0,\gamma)]),$$
thus $P_\phisec(\Fn,\An)\ge P_\phisec(\En,\An).$
Now assume that $0<\cH^1(\Fn\cap L_{x_1})<\gamma$ for all $x_1\in (0,l).$
In this case $\int_{\An}|D_{x_2}\chi_{\Fn}|\ge 2l.$ Indeed,
since $\En\Delta \Fn\strictlyincluded \An$,
each vertical line $L_{x_1}, $ $x_1\in(0,l)$ should cross $\p^*\Fn$ at 
least twice.
For a similar reason,  taking into account the  term
$\int_{\An}|D_{x_1}\chi_{\Fn}|$  we may assume that $\En\cap \p^*\Fn$ lies on 
two horizontal parallel lines at distance $\epsilon\in (0,\gamma).$
Then by definition of $\phisec$-perimeter 
$$P_\phisec(\Fn,\An)-P_\phisec(\En,\An) \ge 2l - 2\epsilon\ge 2(l-\gamma)>0.$$
This implies that $\En$ is a  minimizer of $P_\phisec$ in $\opensetRn.$
Notice that every horizontal section of $\En$ is $(0,l),$
which is not a minimizer of the perimeter in $\R.$

Now, let $\Phi^o(\hsecdual,\xi_3^*) = \phisec^o(\hsecdual)+|\xi_3^*|.$
By Proposition
\ref{prop:cylindrical_local_minimizers} (b) below, $\En\times \R$ is a 
minimizer of $P_\Phi$ in $\opensetRn\times\R.$ Since $\Phi$ is symmetric with respect to
relabelling the coordinate axis, the set $E = (0,l)\times \R\times(0,\gamma)$
is also a  minimizer of $P_\Phi$ in $\R\times\R\times(0,\gamma).$ Notice that
every horizontal section of $E$ is a translation of the strip $(0,l)\times\R,$
which is not a minimizer of $P_\phisec$ in $\R^2$ according to Example \ref{ex:parallel_planes}.
\end{example}

\begin{example}\label{ex:Union of cones}
Let $\Phi^o(\xi_1^*,\xi_2^*)=|\xi_1^*|+|\xi_2^*|.$
Given $l,\gamma\in\R$ suppose one of the following:
\begin{itemize}
\item[a)] $l=0;$
\item[b)] $l\ge 0\ge \gamma;$
\item[c)] $l\ge\gamma>0.$ 
\end{itemize}
Then  the set $E=C_1^{(1)}(0,0)\cup C_2^{(1)}(l,\gamma)$
is a  minimizer of $P_\Phi$
in $\R^2$ even though in case (c) for any
$t\in (0,\gamma),$ the horizontal section $E_t$ 
is  not a minimizer of the
perimeter in $\R$ (see \eqref{eq:1_dimensional_minimizers}).

Indeed, if $l\ge 0\ge \gamma$ then $E$ satisfies the property in 
Remark \ref{rem:minimal_sections}. If $l=0$ and $\gamma>0,$ 
then $\R^2\setminus E$ is union of two disjoint cones satisfying
property stated in Remark
\ref{rem:minimal_sections}. Thus, in both cases
$E$ is a minimizer of $P_\Phi$ in $\R^2.$

Assume (c). By Remark \ref{rem:minimal_sections}  both
$C_1=(-\infty,0)\times(0,+\infty)$
and
$C_2= (-\infty,\gamma)\times(l,+\infty)$
are  minimizers of $P_\Phi$ in $\R^2$ (for brevity we do not write the dependence on $l$ and $\gamma$).
Consider arbitrary $F\in BV_\loc(\R^2)$
with $E\Delta F\strictlyincluded (-M,M)^2,$ for some $M>0.$
\begin{figure}[ht]
 \includegraphics[scale=0.4]{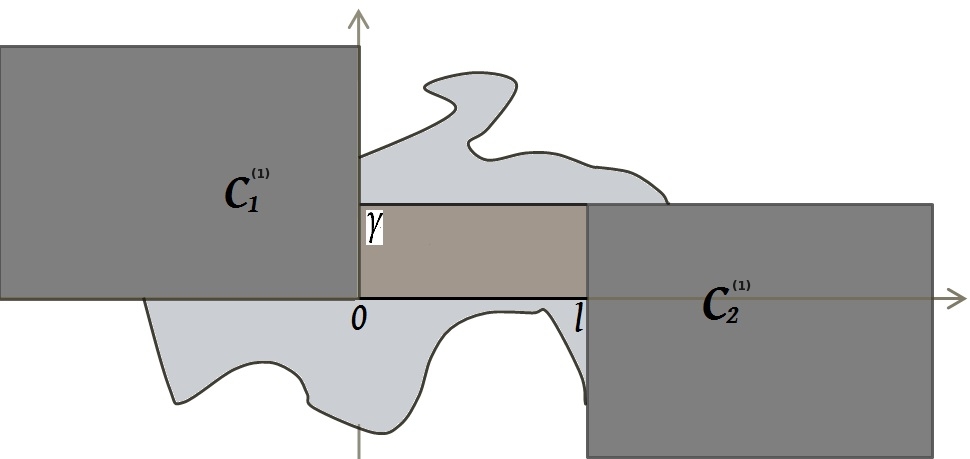}
 \caption{In case $0< \gamma\leq l,$ among all sets connecting two components
 of $E$ the strip parallel to $\xi_1$-axis has the ``smallest'' $\Phi$-perimeter.}\label{bothcomp}
\end{figure}

If $F$ perturbs the components $C_1,C_2$ of $E$ 
separately, {\it i.e.} $F=F_1\cup F_2$
and there exist disjoint open sets $A_1,A_2\subset\R^2$ such that 
$C_i\Delta F_i \subset A_i,$ $i=1,2,$ 
then by minimality of $C_1,$ $C_2$ we have
\begin{align*}
 P_\Phi(F,A_1\cup A_2)  =&P_\Phi(F_1,A_1)+ P_\Phi(F_2,A_2)
\ge  P_\Phi(C_1,A_1)+ P_\Phi(C_2,A_2) 
\\
=&  P_\Phi(E,A_1\cup A_2).
\end{align*}
On the other hand, it is not difficult to see that among all perturbations of $E$
involving both componenents, the best one is obtained by inserting
an  horizontal strip 
as in Figure \ref{bothcomp}. However, because of the assumption 
$0< \gamma\le l,$ this perturbation has larger $\Phi$-perimeter than $E.$
Consequently, $E$ is a  minimizer  of $P_\Phi$ in $\R^2.$
\end{example}

\section{Cylindrical minimizers}\label{sec:cylindrical_minimizers}

Let $\Phi$ be a norm on $\R^{n+1}$ and 
$\opensetRnplusone = \opensetRn \times \R.$

\begin{definition}[\textbf{Cylindrical minimizers}]\label{def:cylindric_min}
We say that 
 a minimizer $E\subseteq\opensetRnplusone$ of $P_\Phi$ in $\Omega$
is {\it cylindrical}   over $\widehat E$ if $E = \En \times
\R,$ where $\En \subseteq \opensetRn.$ 
 
\end{definition}

The aim of this section is to characterize cylindrical
minimizers of $P_\Phi.$ The idea here
is that the (Euclidean) normal to the boundary 
of a cylindrical minimizer is horizontal,
and therefore what matters, in the computation of the anisotropic perimeter, 
is only the horizontal section of the anisotropy.
For this reason it is natural to introduce 
the following property, which informally  requires the upper (and the lower)
part of the boundary of the Wulff shape 
to be a generalized graph (hence possibly with vertical parts) over 
its projection on the horizontal hyperplane $\Rn \times
\{0\}.$

\begin{definition}[\textbf{Unit ball as a generalized graph in the vertical direction}]
We say  that the boundary of the 
unit ball $B_\Phi$ of the norm $\Phi: \R^{n+1} \to [0,+\infty)$
is a {\it generalized graph in the vertical direction}
if
\begin{equation}\label{compRadonMeas}
\Phi(\hsectangent,\xi_{n+1})\ge \Phi(\hsectangent,0),\qquad
(\hsectangent,\xi_{n+1})\in\R^{n+1}.
\end{equation}
\end{definition}

In Lemma \ref{lem:AdjointPropertyG} we show that $\partial B_\Phi$
is a generalized graph in the vertical direction
if and only if so is $\partial B_{\Phi^o}.$

\begin{example}\label{ex:generalized_graph}
\begin{itemize}
 \item[(a)] If
$\Phi(\hsectangent,-\xi_{n+1})=\Phi(\hsectangent,\xi_{n+1})$
for all $(\hsectangent, \xi_{n+1}) \in \R^{n+1},$ then $\partial B_\Phi$  
is a generalized graph in the vertical direction.
Indeed, from convexity
\begin{equation*}
\Phi(\hsectangent,0)\le \Phi\left(\frac{\hsectangent}{2},\frac{\xi_{n+1}}{2}\right)+
\Phi\left(\frac{\hsectangent}{2},-\frac{\xi_{n+1}}{2}\right)=\Phi(\hsectangent,\xi_{n+1}),
\qquad
(\hsectangent, \xi_{n+1})\in\R^{n+1}.
\end{equation*}
\item[(b)] There exists $\partial B_\Phi$ which is a  generalized graph in the vertical direction,
but $\Phi$ does not satisfy
$\Phi(\hsectangent,-\xi_{n+1})=\Phi(\hsectangent,\xi_{n+1}).$
Fix some  $\epsilon\in(1/\sqrt{2},\sqrt{2}]$ and
consider the (symmetric convex)
plane hexagon $K_\epsilon$ with vertices
at $(1,0),$ $(\epsilon,-\epsilon),$ $(0,-1),$
$(-1,0),$ $(-\epsilon,\epsilon),$ $(0,1).$ Let
$\Phi_\epsilon:\R^2\to [0,+\infty)$ be the Minkowski functional of $K_\epsilon.$ Then
$\Phi_\epsilon$  does not satisfy
$\Phi_\epsilon(\xi_1,-\xi_2)=\Phi_\epsilon(\xi_1,\xi_2).$ But
$\partial B_{\Phi_\epsilon}$ is a generalized graph in the vertical direction. Indeed,
consider the  straight line passing through $(1,0)$ and parallel to $\xi_2$ - axis.
This line does not cross the interior of $K_\epsilon.$
Thus $\Phi_\epsilon(1,\xi_2)\ge 1=\Phi_\epsilon(1,0).$
 If $\xi_1\ne 0,$ then
$$
\Phi_\epsilon(\xi_1,\xi_2) = |\xi_1| \Phi_\epsilon(1,\xi_2/\xi_1)
\ge |\xi_1| \Phi_\epsilon(1,0)=\Phi_\epsilon(\xi_1,0).$$
If $\xi_1=0,$ the inequality $\Phi_\epsilon(\xi_1,\xi_2)\ge
\Phi_\epsilon(\xi_1,0)$ is obvious.

\item[c)] The norm $\Phi:\R^2\to [0,+\infty),$
 $\Phi(\xi_1,\xi_2)=\sqrt{\xi_1^2+\xi_1\xi_2+\xi_2^2}$
has a unit ball the boundary of which is not 
 a generalized graph in the vertical direction, since
$\Phi(2,0) = 2>\sqrt{3} = \Phi(2,-1).$
\end{itemize}
\end{example}

\begin{proposition}[\textbf{Cylindrical minimizers}]\label{prop:cylindrical_local_minimizers}
Let $\Phi:\R^{n+1}\to [0,+\infty)$ be a norm.
Let $\En
\in BV_{\rm loc}(\opensetRn).$
The following
assertions hold:
\begin{itemize}
\item[(a)] if 
$\En\times \R$	
is a  minimizer of $P_\Phi$ in $\opensetRn \times \R,$ then
$\En$ is a   minimizer of $P_\phi$
in $\opensetRn,$ where
$$\phi:= \left(\Phi_{\vert_{\{\xi_{n+1}^*=0\}}}^o \right)^o.$$
\item[(b)]
if $\partial B_{\Phi}$ is a generalized graph in the vertical direction and
$\En$ is a minimizer of $P_\phisec$ in $\opensetRn,$ where
 $\phisec:=\Phi_{\vert_{\{\xi_{n+1}=0\}}}^{},$ 
then
$\En\times \R$ is a  minimizer of $P_\Phi$
in $\opensetRn\times \R.$
\end{itemize}
\end{proposition}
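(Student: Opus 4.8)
\emph{Common computation.} The starting point for both parts is that the cylinder $E=\En\times\R$ has reduced boundary $\p^*\En\times\R$, whose Euclidean unit normal is the purely horizontal vector $(\nu_{\En},0)$. Hence $\Phi^o(\nu_E)=\Phi^o(\nu_{\En},0)=\phi^o(\nu_{\En})$ on $\p^*E$, where $\phi^o(\hsectangent)=\Phi^o(\hsectangent,0)$ by the very definition of $\phi$. Integrating and using Fubini's theorem gives the identity
\begin{equation*}
P_\Phi(\En\times\R,\An\times(a,b))=(b-a)\,P_\phi(\En,\An),\qquad a<b,\ \An\in\cK(\opensetRn),
\end{equation*}
which is the computational core of the proposition.

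\emph{Part (a).} I would fix $\An\in\cK(\opensetRn)$ and a competitor $\Fn\in BV_\loc(\opensetRn)$ with $\En\Delta\Fn\strictlyincluded\An$, and for $M>0$ cap the infinite cylinder over $\Fn$ by setting
\begin{equation*}
G:=(\En\times\R)\,\Delta\,\big[(\En\Delta\Fn)\times(-M,M)\big],
\end{equation*}
so that the horizontal section $G_t$ equals $\Fn$ for $|t|<M$ and $\En$ for $|t|>M$, and $G\,\Delta\,(\En\times\R)\strictlyincluded\An\times(a,b)$ for any $a<-M<M<b$. The reduced boundary of $G$ inside $\An\times(a,b)$ consists of the lateral cylinder over $\p^*\Fn$ (of height $2M$), the lateral cylinder over $\p^*\En$, and two horizontal caps at $t=\pm M$, each of $\cH^n$-measure $|\En\Delta\Fn|$ and vertical normal, so contributing $\Phi^o(0,\pm1)=\Phi^o(0,1)$. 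Applying the identity above to the lateral pieces, the minimality inequality for $\En\times\R$ tested against $G$ becomes, after cancelling the common term $(b-a-2M)P_\phi(\En,\An)$,
\begin{equation*}
2M\,P_\phi(\En,\An)\le 2M\,P_\phi(\Fn,\An)+2\,|\En\Delta\Fn|\,\Phi^o(0,1).
\end{equation*}
Dividing by $2M$ and letting $M\to+\infty$ yields $P_\phi(\En,\An)\le P_\phi(\Fn,\An)$, i.e. $\En$ minimizes $P_\phi$.

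\emph{Part (b).} Here I would argue by slicing. By Lemma \ref{lem:AdjointPropertyG} the generalized-graph hypothesis on $\p B_\Phi$ transfers to $\p B_{\Phi^o}$, giving $\Phi^o(\hsecdual,\xi_{n+1}^*)\ge\Phi^o(\hsecdual,0)$ for every $(\hsecdual,\xi_{n+1}^*)$. The same hypothesis forces the projection of $B_\Phi$ onto $\Rn$ to coincide with $B_\phisec$ (since $\Phi(\hsectangent,\xi_{n+1})\ge\Phi(\hsectangent,0)=\phisec(\hsectangent)$), whence $\Phi^o(\hsecdual,0)=\phisec^o(\hsecdual)$; that is, equality holds in \eqref{restHorPl} and $\phi=\phisec$ (cf.\ Lemma \ref{lem:CommutRestAdj}). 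Now take $A\in\cK(\opensetRn\times\R)$ and $F\in BV_\loc$ with $(\En\times\R)\Delta F\strictlyincluded A$, and write $\nu_F=(\widehat\nu_F,(\nu_F)_t)$. Discarding the vertical component of the normal via the graph inequality, then slicing in the horizontal variables,
\begin{equation*}
P_\Phi(F,A)\ge\int_{A\cap\p^*F}\Phi^o(\widehat\nu_F,0)\,d\cH^n=\int_{A\cap\p^*F}\phisec^o(\widehat\nu_F)\,d\cH^n=\int_\R P_\phisec(F_t,A_t)\,dt,
\end{equation*}
where $A_t,F_t$ are horizontal sections and the last equality is the horizontal slicing formula (Vol'pert's theorem; see Remark \ref{rem:miranda}), resting on $\widehat\nu_F/|\widehat\nu_F|=\nu_{F_t}$ for a.e.\ $t$. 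For a.e.\ $t$ one has $F_t\in BV_\loc(\opensetRn)$ and $\En\Delta F_t\strictlyincluded A_t$, so minimality of $\En$ gives $P_\phisec(F_t,A_t)\ge P_\phisec(\En,A_t)$. Integrating in $t$ and applying the slicing formula to the cylinder (now an equality, its normal being horizontal and $\phi=\phisec$) recovers $\int_\R P_\phisec(\En,A_t)\,dt=P_\Phi(\En\times\R,A)$, closing the chain $P_\Phi(F,A)\ge P_\Phi(\En\times\R,A)$.

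\emph{Main obstacle.} Part (a) is elementary once $G$ is book-kept correctly; the only point to monitor is that the cap contribution is $O(1)$ in $M$ against the $O(M)$ lateral terms, which is exactly why the limit $M\to+\infty$ removes it. The crux of (b) is twofold. First, the slicing identity $\int_{A\cap\p^*F}\phisec^o(\widehat\nu_F)\,d\cH^n=\int_\R P_\phisec(F_t,A_t)\,dt$ must be justified through Vol'pert's theorem on sections of sets of finite perimeter, including the a.e.\ identification of the slice normals; this is the genuinely technical step. Second, the generalized-graph hypothesis is indispensable: it is precisely what upgrades the general inequality $\phi\le\phisec$ (cf.\ \eqref{restHorPl}) to the equality $\phi=\phisec$, so that the lower bound $\Phi^o(\nu_F)\ge\phisec^o(\widehat\nu_F)$ and the exact value $P_\Phi(\En\times\R,A)=\int_\R P_\phisec(\En,A_t)\,dt$ are compatible and the chain closes. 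Without it the argument breaks down, consistently with Example \ref{ex:parallel_planes}.
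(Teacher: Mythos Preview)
Your proof is correct and follows essentially the same strategy as the paper: for (a) you compare $\En\times\R$ with a capped cylinder over $\Fn$ and let the height go to infinity to kill the $O(1)$ cap contribution, and for (b) you drop the vertical component of the normal via the generalized-graph inequality, slice horizontally using \eqref{eq:hor_per}, apply the minimality of $\En$ slice by slice, and integrate back. The only cosmetic differences are that the paper works directly on product sets $\An\times I_M$ rather than a general $A\in\cK(\opensetRn\times\R)$, and that it invokes \eqref{compRadonMeas} for $\Phi^o$ without explicitly passing through Lemma~\ref{lem:AdjointPropertyG} as you do; your closing reference to Example~\ref{ex:parallel_planes} is, however, misplaced, since that example concerns a horizontal slab (not a vertical cylinder $\En\times\R$) with a $\Phi$ that \emph{does} satisfy the generalized-graph hypothesis.
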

\begin{remark}
In general $\phi\ne \phisec$ (see Remark \ref{rem:StrInq} and Lemma \ref{lem:CommutRestAdj}).
\end{remark}

\begin{proof}
(a) Take $\bopensetRn\in\cK(\opensetRn),$
 $\Fn \in BV_{\rm loc}(\opensetRn)$ with
$\En\Delta \Fn\strictlyincluded \bopensetRn.$
For any  $m>0$ set $I_m := (-m,m),$ and
define
$$
F_m := [E\setminus (\R^n
\times I_m)] \cup [\Fn\times I_m].$$
Then
 $E\Delta F_m\strictlyincluded \bopensetRn \times I_{m+1} \subset
\subset \opensetRn\times \R$
 and, by minimality,
\begin{equation}\label{eq:minimality}
P_\Phi(E,\bopensetRn \times I_{m+1}) \le
P_\Phi(F_m, \bopensetRn \times I_{m+1}).
\end{equation}
Writing $\nu_E^{} = (\widehat{\nu_E}, (\nu_E)_t),$
we have   $\nu_E^{}=(\nu_{\En},0)$ $\mathcal H^n$-almost
everywhere on $\p^*E.$
Hence
\begin{equation}\label{eq:P_Phi_E_K}
 \begin{aligned}
  P_\Phi\left(E,\bopensetRn\times I_{m+1}\right)
= &\int_{[\bopensetRn\times I_{m+1}]
\cap \partial^* E}
\Phi^o\left(\widehat{\nu_E}, (\nu_E)_t\right)\,d\cH^{n}
\\
  =& \int_{[\bopensetRn\times I_{m+1}] \cap \partial^* E}
\phi^o( \nu_{\En} )\,d\cH^{n}
= 2(m+1)\int_{\bopensetRn \cap \p^*\En} \phi^o(\nu_{\En})\,d\cH^{n-1}
\\
=& 2(m+1)P_\phi(\En,\bopensetRn).
 \end{aligned}
\end{equation}
Similarly, $\nu_{F_m}=(\nu_{\Fn},0)$ on  $(\p^* \Fn)\times I_m,$
$\nu_{F_m} = (0,\pm1)$ on
$(\En\Delta \Fn) \times \{\pm m\}$ and $\nu_{F_m}=(\nu_{\En},0)$ on
$(\p^* \Fn)\times (I_{m+1}\setminus\overline{I_m}).$
As a consequence,
\begin{equation}\label{eq:P_Phi_F_K}
\begin{aligned}
  P_\Phi(F_m, \bopensetRn\times I_{m+1})
=& \int_{[ \bopensetRn\times I_{m} ] \cap \partial^* F_m}
\Phi^o(\nu_{F_m})\,d\cH^{n} + \int_{[ \bopensetRn\times \{\pm m\} ] \cap \partial^* F_m}
\Phi^o(\nu_{F_m})\,d\cH^{n}
\\
&+\int_{[ \bopensetRn\times (I_{m+1}\setminus \overline{I_m}) ] \cap \partial^* F_m}
\Phi^o(\nu_{F_m})\,d\cH^{n}
\\
=&
2m P_{\phi}(\Fn, \bopensetRn) + 2 \Phi^o(0,1)\cH^{n}(\En\Delta \Fn)+2P_\phi(\En,\An).
 \end{aligned}
\end{equation}
From
\eqref{eq:P_Phi_E_K},
\eqref{eq:P_Phi_F_K} and
\eqref{eq:minimality}, it follows
$$P_\phi(\En,\bopensetRn)\le P_\phi(\Fn,\bopensetRn) +
\dfrac{\Phi^o(0,1)}{m}\,\cH^{n}(\Fn\Delta \En).
$$
Letting $m\to+\infty$  we get $P_\phi(\En,\bopensetRn)\le
 P_\phi(\Fn,\bopensetRn),$ and
assertion (a)
follows.

\smallskip

(b) 
By Lemma \ref{lem:CommutRestAdj}, $\phisec^o=\Phi_{\vert_{\{\xi_{n+1}^*=0\}}}^o.$
Take $F \in BV_{\rm loc}(\opensetRn\times \R),$
and let $\bopensetRn \in\cK(\opensetRn)$ and $M>0$
be such that
 $E\Delta F \strictlyincluded \bopensetRn \times I_M,$ where
$I_M := (-M,M).$ 
Then $\En\Delta F_t\strictlyincluded \An$
for all $t\in(-M,M)$ and
since $\En$ is a minimizer of $P_\phisec$
in $\opensetRn,$ using  \eqref{compRadonMeas} and \eqref{eq:hor_per}
we get
\begin{align*}
P_\Phi(F,\bopensetRn \times I_M)
=& \int_{(\bopensetRn \times I_M)\cap\p^*F} \Phi^o( \widehat{\nu_F}, (\nu_F)_t )\,d\cH^{n} \ge
\int_{(\bopensetRn \times I_M)\cap\p^*F} \Phi^o( \widehat{\nu_F}, 0 )\,d\cH^{n}\\
=&  \int_{-M}^M P_\phisec(F_t, \bopensetRn)\,dt\ge
 \int_{-M}^M P_\phisec(\En, \bopensetRn)\,dt 
 =  P_\Phi(E,\An\times I_M),
 \end{align*}
and assertion (b) follows.
\end{proof}

\begin{example}[\textbf{Characterization of cylindrical minimizers 
for a cubic anisotropy}]\label{ex:char.cylind.min}\rm
Proposition \ref{prop:cylindrical_local_minimizers} allows us to
classify the cylindrical minimizers
of $P_\Phi$  for suitable  choices of the dimension and of the anisotropy.
Take $n=2,$ $\opensetRn=  \R^2,$
and let 
$$
\unitballPhi = [-1,1]^3;
$$
 in particular,
$\partial B_{\Phi}$ is a generalized graph in the vertical direction and
$B_\phisec$ is the 
square $[-1,1]^2$ in the (horizontal) plane. The  minimizers of $P_\phisec$
are classified
as follows \cite[Theorems 3.8 (ii) and  3.11 (2)]{NoPa:05}: the infinite
cross $\widehat C = \{|x_1| >|x_2|\}$ and its complement,
the subgraphs and epigraphs $\widehat S$
of monotone functions of one variable, and suitable unions $\widehat U$ of two
connected components, each of which is the subgraph of a monotone
function of one variable.
Then  Proposition
\ref{prop:cylindrical_local_minimizers} (b) implies that
$$
\widehat C \times \R, \quad (\R^2\setminus \widehat C) \times \R, 
\quad \widehat S \times \R, \quad \widehat U \times \R
$$
are the {\it only} cylindrical minimizers of $P_\Phi$ in $\R^3.$
The same result holds if $B_\phisec$ is a parallelogram centered at the origin,
and $\partial B_\Phi$ is any generalized graph in the vertical direction
such that $B_\phisec = B_\Phi\cap \{\xi_3=0\}.$
\end{example}

\section{Cartesian minimizers for partially monotone norms}\label{sec:cartesian_minimizers}
Let $\Phi:\R^{n+1} \to [0,+\infty)$ be a norm. 

\begin{definition}[\bf Cartesian minimizers]\label{def:cartesian_min}
We call a minimizer $E\subseteq \Omega = \opensetRn\times\R$ a {\it cartesian minimizer} of 
$P_\Phi$ in $\Omega = \opensetRn \times\R$ if $E = \sg(u)$
for some function $u : \opensetRn \to \R.$ 
\end{definition}

Let $v\in BV_{\rm loc}(\opensetRn)$; in what follows the symbol $\int_{\An}\Phi^o(-Dv,1)$
means
\begin{equation*}
\begin{aligned}
& \int_{\bopensetRn} \Phi^o( - Dv,1) = \sup\Big\{ \int_{\An} \Big(v
\sum\limits_{j=1}^n\frac{\partial \eta_j}{\partial x_j}
 +\eta_{n+1} \Big)\,dx:
\\
&\qquad \qquad \qquad \qquad  \qquad  \eta=(\eta_1,\ldots,\eta_{n+1})\in C_c^1(\An,B_\Phi)
\Big\}.
\end{aligned}
\end{equation*}
If $v\in W_\loc^{1,1}(\opensetRn)$ we have
\cite[Theorem 2.91]{AFP:00} 
\begin{equation*}
\begin{aligned}
 P_\Phi(\sg(v),\bopensetRn\times\R)
=&
\int_{(\bopensetRn\times\R)\cap \partial \sg(v) }
\Phi^o(\nu_{\sg(v)})\,d\cH^n
\\
 =&\int_{(\bopensetRn\times\R) \cap \partial \sg(v)}
\Phi^o(-\nabla v,1)\,\frac{d\cH^n}{\sqrt{1+|\nabla v|^2}}
 =\int_{\bopensetRn} \Phi^o(-\nabla v,1)\,dx.
\end{aligned}
\end{equation*}
Using the techniques in
\cite{Da:80},
the previous equality extends to
any $v\in BV_{\rm loc}(\opensetRn),$
\begin{equation}\label{minP}
 P_\Phi(\sg(v),\An\times\R)=
\int_{(\bopensetRn\times\R)\cap \partial^* \sg(v) }
\Phi^o(\nu_{\sg(v)})\,d\cH^n=
\int_{\bopensetRn} \Phi^o(-Dv,1).
\end{equation}
Accordingly, we define the functional
$\cG:BV_{\rm loc}(\opensetRn) \times \cK(\opensetRn)\to [0,+\infty)$
as follows:
$$
 \cG(v, \bopensetRn):=\int_{\bopensetRn} \Phi^o(-Dv,1),\qquad
v\in BV_\loc(\opensetRn),\,
\bopensetRn\in\cK(\opensetRn).
$$
\begin{definition}\label{def:local_minimizer_of_G}
We say that $u\in BV_{\rm loc}(\opensetRn)$ is a {\it minimizer} of
$\cG$ by {\it compact perturbations} in $\opensetRn$
(briefly, a {\it minimizer} of $\cG$ in $\opensetRn$),
and we write 
$$
u \in \cM_{\Phi^o}(\opensetRn),
$$
if for any $\bopensetRn\in\cK(\opensetRn)$ and 
$v\in BV_\loc(\opensetRn)$ with $\supp (u-v) \strictlyincluded \bopensetRn$ one has
$$\cG(u,\bopensetRn)\le \cG(v, \bopensetRn).
$$
\end{definition}

Note that
 $\cM_{\Phi^o}(\opensetRn)\ne \emptyset$ since linear functions
on $\opensetRn$ belong\footnote{
If $u$ is linear,
then  $\sg(u)$ is the intersection
of a half-space with $\opensetRn\times\R,$ hence
$\sg(u)$ is a
minimizer of $P_\Phi$  in $\opensetRn\times\R$
(Example \ref{ex:half-spaces}) and
$u\in\cM_{\Phi^o}(\opensetRn)$ (see Theorem \ref{teo:Subgraphs}(a) below).}
 to $\cM_{\Phi^o}(\opensetRn).$
Observe also that if
 $u\in\cM_{\Phi^o}(\opensetRn)$ then $u+c\in\cM_{\Phi^o}(\opensetRn)$
for any $c\in\R.$

We shall need the following standard result.
\begin{theorem}[\textbf{Compactness}]\label{teo:compactness}
Let $\Phi:\R^{n+1}\to [0,+\infty)$ be a norm.
If $u_k\in\cM_{\Phi^o}(\opensetRn),$ $u\in L_{\loc}^1(\opensetRn)$ and $u_k\to u$ in $L_{\loc}^1(\opensetRn)$
as $k \to +\infty,$
then $u\in\cM_{\Phi^o}(\opensetRn).$
\end{theorem}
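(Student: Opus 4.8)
The plan is to prove lower semicontinuity of the minimality property under $L^1_{\loc}$ convergence, using the standard fact that the functional $\cG$ is itself lower semicontinuous with respect to $L^1_{\loc}$ convergence, together with a diagonal/recovery argument to transfer competitors from the limit $u$ back to the approximating $u_k$.

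First I would fix an arbitrary $\bopensetRn\in\cK(\opensetRn)$ and a competitor $v\in BV_\loc(\opensetRn)$ with $\supp(u-v)\strictlyincluded\bopensetRn$, and set out to prove $\cG(u,\bopensetRn)\le \cG(v,\bopensetRn)$. I would choose an intermediate open set $\bopensetRn'$ with $\supp(u-v)\strictlyincluded\bopensetRn'\strictlyincluded\bopensetRn$. The lower semicontinuity of $\cG$ (which follows since $\Phi^o(-Dw,1)$ is a convex one-homogeneous integrand, so the functional is a supremum of the linear functionals appearing in the definition of $\int_{\bopensetRn}\Phi^o(-Dv,1)$, hence $L^1_\loc$-lower semicontinuous) gives
\begin{equation*}
\cG(u,\bopensetRn')\le \liminf_{k\to+\infty}\cG(u_k,\bopensetRn').
\end{equation*}

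The key step is to build, for each $k$, an admissible competitor $v_k$ for $u_k$ on $\bopensetRn$ that agrees with $u_k$ near $\p\bopensetRn'$ but is close to $v$ inside, so that minimality of $u_k$ yields an upper bound converging to $\cG(v,\bopensetRn')$. Here I would use a cut-off interpolation: choose a smooth $\zeta$ with $\zeta=1$ on a slightly smaller set and $\zeta=0$ outside $\bopensetRn'$, and set $v_k:=\zeta v+(1-\zeta)u_k$, which satisfies $\supp(u_k-v_k)\strictlyincluded\bopensetRn'$, hence by minimality of $u_k$ on $\opensetRn$ we get $\cG(u_k,\bopensetRn')\le\cG(v_k,\bopensetRn')$. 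One must then estimate $\cG(v_k,\bopensetRn')$ from above by $\cG(v,\bopensetRn')$ plus an error coming from the transition region where $\zeta$ varies; the term involving $D\zeta\,(v-u_k)$ is controlled because $u_k\to u=v$ in $L^1$ on that region (recall $v=u$ outside $\supp(u-v)$, which is compactly contained in the zone where $\zeta$ is locally constant). Letting $k\to+\infty$ and then optimizing the cut-off so the transition region shrinks toward $\p\bopensetRn'$ yields $\limsup_k\cG(u_k,\bopensetRn')\le\cG(v,\bopensetRn)$. Combining with the lower semicontinuity inequality gives $\cG(u,\bopensetRn')\le\cG(v,\bopensetRn)$, and since $\cG(u,\bopensetRn')=\cG(u,\bopensetRn)$ by the support condition (the integrands for $u$ and $v$ coincide outside $\bopensetRn'$), we conclude.

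The main obstacle I anticipate is the competitor-transfer step: ensuring the cut-off construction produces an \emph{admissible} perturbation of $u_k$ (correct support condition) while keeping the error term from the transition zone controllable. The subtlety is that $\Phi^o(-Dv_k,1)$ is not additive, so on the transition annulus one needs a careful estimate using convexity and the linear growth of $\Phi^o$ in its first argument, bounding the extra total variation created by $\zeta$ against $\|D\zeta\|_\infty\int|v-u_k|$, which tends to zero. An alternative, cleaner route avoiding explicit cut-offs is to invoke a standard localization lemma for minimizers of functionals with linear growth (as in \cite{Gi:84, AFP:00}): passing from global minimality to the inequality on $\bopensetRn'$ via the fact that the defect $\cG(u_k,\bopensetRn')-\inf\{\cG(w,\bopensetRn'):\supp(u_k-w)\strictlyincluded\bopensetRn'\}$ vanishes, and then using lower and upper semicontinuity simultaneously. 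Either way, the heart of the argument is reconciling the non-additivity of the anisotropic area integrand with the cut-and-paste needed to compare $u$ and $u_k$.
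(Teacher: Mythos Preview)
Your strategy is exactly the standard one the paper invokes (it simply cites \cite[Theorem 3.4]{NoPa:02}, using lower semicontinuity of $P_\Phi$, the bound \eqref{comparison}, and the linear growth estimate $\Phi^o(-Dw,1)\le\Phi^o(Dw,0)+\Phi^o(0,1)$): lower semicontinuity plus a cut--and--paste transfer of competitors, with the transition error controlled by the linear growth of $\Phi^o$ and the $L^1$ convergence $u_k\to u$.

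One bookkeeping slip to fix: the final sentence is not right as written. Certainly $\cG(u,\bopensetRn')\ne\cG(u,\bopensetRn)$ (the integrand $\Phi^o(0,1)$ is strictly positive even where $Du=0$). What your cut-off argument actually yields, after cancelling the common $\cG(u_k,\{\zeta=0\}\cap\bopensetRn')$ from both sides of the minimality inequality and absorbing the $(1-\zeta)\Phi^o(-Du_k,1)$ term, is
\[
\cG(u_k,\{\zeta=1\})\le \cG(v,\{\zeta>0\})+o_k(1),
\]
hence by lower semicontinuity $\cG(u,\{\zeta=1\}^\circ)\le\cG(v,\{\zeta>0\})\le\cG(v,\bopensetRn')$. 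Letting $\{\zeta=1\}^\circ\nearrow\bopensetRn'$ gives $\cG(u,\bopensetRn')\le\cG(v,\bopensetRn')$, and since $u=v$ on $\bopensetRn\setminus\bopensetRn'$ the desired inequality on $\bopensetRn$ follows. So the conclusion is correct, but you should track the domains as $\{\zeta=1\}$ versus $\{\zeta>0\}$ rather than claiming a bound for $\cG(u_k,\bopensetRn')$ directly.
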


\begin{proof}
The proof is the same as in \cite[Theorem 3.4]{NoPa:02} making use of lower
semicontinuity of
$P_\Phi,$ \eqref{comparison} and the inequality
$\Phi^o(-Dw,1)\le \Phi^o(Dw,0)+\Phi^o(0,1).$ 
\end{proof}

The aim of this section is to show the relations between 
minimizers and cartesian minimizers, under a special assumption
on the norm. 

\begin{definition}[\textbf{Partially monotone norm}]\label{def:partial_monotone_norm}
The norm $\Phi: \R^{n+1} \to [0,+\infty)$  is called {\it partially monotone} if
given $\xi = (\hsectangent,\xi_{n+1})\in
\R^{n+1}$ and $ \eta =(\widehat{\eta},\eta_{n+1})\in\R^{n+1}$ we have
\begin{equation}\label{comp_xi_eta}
\Phi(\widehat{\xi},0)\le \Phi(\widehat{\eta},0),  \ \
\Phi(0,\xi_{n+1})\le \Phi(0,\eta_{n+1}) \quad \Longrightarrow
\quad \Phi(\xi)\le \Phi(\eta). 
\end{equation}
\end{definition}

Partially monotone norms are characterized in Section \ref{subsec:partially_monotone_norms}.

\begin{example}\label{ex:Part_mon}
The following norms on $\R^{n+1}$ are partially monotone:
$\Phi(\hsectangent,\xi_{n+1}) = \max\{\phisec(\hsectangent),|\xi_{n+1}|\}$;
$\Phi(\hsectangent,\xi_{n+1}) 
= ([\phisec(\hsectangent)]^p+|\xi_{n+1}|^p)^{1/p},$ 
 where $\phisec:\Rn \to \oic$
 is a norm and  $p\in[1,+\infty).$
\end{example}

\begin{theorem}[\textbf{Minimizers and cartesian minimizers}]\label{teo:Subgraphs}
Let $\Phi:\R^{n+1} \to [0,+\infty)$
 be a norm,
and $u\in BV_{\rm loc}(\opensetRn).$
The following assertions hold:
\begin{itemize}
 \item[(a)]  if $\sg(u)$ is a  minimizer of $P_\Phi$ 
in $\opensetRn\times\R,$ then
 $u$ is a minimizer of $\cG$ in $\opensetRn$;
 \item[(b)] if $\Phi$ is partially monotone
and  $u$ is a minimizer of $\cG$  in $\opensetRn,$
then $\sg(u)$ is a  minimizer of $P_\Phi$
in $\Omega = \opensetRn\times\R.$
\end{itemize}
\end{theorem}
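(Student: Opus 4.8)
I treat the two implications separately.

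\smallskip

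\emph{Part (a).} The plan is to transfer the minimality of $\sg(u)$ to $u$ through the identity \eqref{minP}, the only difficulty being that a $\cG$-competitor $v$ yields a subgraph $\sg(v)$ differing from $\sg(u)$ on an unbounded vertical slab, hence not directly an admissible perturbation. Fix $\An\in\cK(\opensetRn)$ and $v\in BV_\loc(\opensetRn)$ with $\supp(u-v)\strictlyincluded\An$, and for $M>0$ introduce the truncated hybrid
\[
F_M:=\big[\sg(u)\setminus(\opensetRn\times I_M)\big]\cup\big[\sg(v)\cap(\opensetRn\times I_M)\big],\qquad I_M=(-M,M),
\]
which coincides with $\sg(u)$ outside $\supp(u-v)\times\overline{I_M}\strictlyincluded\opensetRn\times\R$ and is thus admissible. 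Its reduced boundary inside $\An\times I_{M+1}$ consists of the lateral part of $\partial^*\sg(v)$ in $\An\times I_M$, the lateral part of $\partial^*\sg(u)$ in $\An\times(I_{M+1}\setminus\overline{I_M})$ (which also belongs to $\partial^*\sg(u)$ and cancels in the comparison), and two horizontal caps at $t=\pm M$, of vertical normal and $\Phi^o$-cost $\Phi^o(0,1)\,\cH^n((\{u>M\}\Delta\{v>M\})\cap\An)$ and $\Phi^o(0,1)\,\cH^n((\{u<-M\}\Delta\{v<-M\})\cap\An)$. Writing $P_\Phi(\sg(u),\An\times I_{M+1})\le P_\Phi(F_M,\An\times I_{M+1})$, cancelling the common lateral term and letting $M\to+\infty$, the caps are bounded by $\cH^n(\{|u|>M\}\cap\An)+\cH^n(\{|v|>M\}\cap\An)\to0$ (since $u,v\in L^1(\An)$), while monotone convergence sends the surviving terms to $\cG(u,\An)$ and $\cG(v,\An)$; hence $\cG(u,\An)\le\cG(v,\An)$.

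\smallskip

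\emph{Part (b).} The plan is to replace an arbitrary perturbation of $\sg(u)$ by a cartesian one of equal vertical content, and then invoke the minimality of $u$. Given $A\in\cK(\opensetRn\times\R)$ and $F$ with $\sg(u)\Delta F\strictlyincluded A$, choose $\An\in\cK(\opensetRn)$ and $M>0$ with $\sg(u)\Delta F\strictlyincluded\An\times I_M$, so that $F=\sg(u)$ outside this box. To $F$ I associate
\[
w(x):=u(x)+\int_\R\big(\chi_F(x,t)-\chi_{\sg(u)}(x,t)\big)\,dt,
\]
which is finite a.e., equals $u$ off $\An$, and satisfies $\supp(u-w)\strictlyincluded\An$. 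The crux is the inequality
\begin{equation*}
P_\Phi(\sg(w),\An\times\R)\le P_\Phi(F,\An\times\R).\tag{$\star$}
\end{equation*}
Granting $(\star)$, the total variation defining $\cG(w,\An)$ is finite, so $w\in BV(\An)$, and \eqref{minP} together with the minimality of $u$ gives
\[
P_\Phi(\sg(u),\An\times\R)=\cG(u,\An)\le\cG(w,\An)=P_\Phi(\sg(w),\An\times\R)\le P_\Phi(F,\An\times\R);
\]
subtracting the contributions on $(\An\times\R)\setminus A$, which coincide for $\sg(u)$ and $F$, yields $P_\Phi(\sg(u),A)\le P_\Phi(F,A)$. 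To prove $(\star)$ I would use the dual formula $P_\Phi(F,\An\times\R)=\sup\{-\int_F\div\eta:\eta\in C^1_c(\An\times\R;B_\Phi)\}$ and test it only against fields independent of the vertical variable. Given $\zeta=(\widehat\zeta,\zeta_{n+1})\in C^1_c(\An;B_\Phi)$ and a cut-off $\theta_M$ equal to $1$ on $I_M$ and supported in $I_{M+1}$, the field $\eta(x,t):=\theta_M(t)\zeta(x)$ lies in $C^1_c(\An\times\R;B_\Phi)$ since $\theta_M\in[0,1]$ and $\Phi$ is positively one-homogeneous. Integrating the two summands of $\div\eta$ separately, and using that $\theta_M'$ concentrates at $t=\pm M$ where $F$ agrees with $\sg(u)$, a direct computation (with the symmetry of $B_\Phi$ absorbing signs) shows $-\int_F\div\eta\to\int_{\An}\big(w\,\div\widehat\zeta+\zeta_{n+1}\big)$ as $M\to+\infty$: the divergent constant vertical offset is killed by $\int_{\An}\div\widehat\zeta=0$, and $u,w\in L^1(\An)$ control the tails on $\{|u|>M\}$. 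Taking the supremum over $\zeta$ and recognising the dual expression for $\cG(w,\An)=P_\Phi(\sg(w),\An\times\R)$ yields $(\star)$.

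\smallskip

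The \textbf{main obstacle} is precisely $(\star)$: the assertion that straightening $F$ into the cartesian set $\sg(w)$ of equal vertical content does not increase the $\Phi$-perimeter. In the duality argument above it follows rather cleanly, the only delicate points being the estimate of the vertical tails and of the horizontal caps; the role of the hypothesis that $\Phi$ be partially monotone (Definition \ref{def:partial_monotone_norm}) is most transparent in an alternative, geometric proof of $(\star)$ by a monotone vertical rearrangement of the sections $F_x$ into the single downward ray $(-\infty,w(x))$. There partial monotonicity is exactly what guarantees that coalescing the several vertical boundary crossings above a given $x$ into one graph is not penalised by $\Phi^o$, so that the rearrangement lowers the anisotropic perimeter; establishing this monotonicity of the anisotropic energy under vertical rearrangement is, in either formulation, the heart of the matter.
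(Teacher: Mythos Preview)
Your argument for both parts is correct, and in fact sharper than the paper's.

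\textbf{Part (a).} The paper proceeds differently: it first compares $\sg(u)$ with $\sg(u+\psi)$ for $\psi\in C_c^1(\opensetRn)$, where the symmetric difference is automatically bounded, and then passes to general $BV$ competitors by approximation. Your direct truncation $F_M$ avoids the approximation step at the price of the cap estimate, which you handle correctly via $u,v\in L^1(\An)$. Both routes are equally short.

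\textbf{Part (b).} Your function $w$ coincides with the paper's $v$ (the limit of $v_h(x)=\int_{-h}^h\chi_F\,dt-h$). The substantive difference lies in the proof of $(\star)$. The paper establishes two \emph{separate} inequalities,
\[
\int_{\An\times\R}\phisec_1^o(D_x\chi_{\sg(v)})\le\int_{\An\times\R}\phisec_1^o(D_x\chi_F),
\qquad
\int_{\An\times\R}\phisec_2^o(D_t\chi_{\sg(v)})\le\int_{\An\times\R}\phisec_2^o(D_t\chi_F),
\]
by testing $D\chi_F$ against purely horizontal and purely vertical fields, and then invokes Lemma~\ref{lem:PartmonNorm} (which genuinely needs partial monotonicity) to recombine them into $P_\Phi(\sg(v),\cdot)\le P_\Phi(F,\cdot)$. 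Your duality argument tests instead with the \emph{full} field $\theta_M(t)\zeta(x)$, $\zeta\in C_c^1(\An;B_\Phi)$, and lands directly on $\cG(w,\An)\le P_\Phi(F,\An\times\R)$ after passing to the limit. The only ingredients you use are the positive homogeneity and central symmetry of $\Phi$; the passage to the limit in the horizontal term (killing the divergent constant $M+\tfrac12$ via $\int_{\An}\div\widehat\zeta=0$, and dominating the remainder by an $L^1$ function comparable to $|u|$) is exactly the computation the paper carries out around \eqref{domconvth}. So your proof of $(\star)$ is valid and, notably, never uses partial monotonicity: the hypothesis in (b) is needed only for the alternative rearrangement proof you sketch at the end, and for the paper's own splitting-and-recombining strategy. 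In other words, your duality route actually yields a stronger statement than the one the paper proves.
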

\begin{proof}
(a)
Let $\test\in
 C_c^1(\opensetRn)$
be such that
 $\supp(\test)\strictlyincluded \bopensetRn$ for
some $\bopensetRn\in\cK(\opensetRn).$ Then there exists $H>0$ such that
$\sg(u) \Delta \sg(u+\test) \strictlyincluded \bopensetRn\times(-H,H).$
If $\sg(u)$ is a  minimizer of $P_\Phi,$ then
$P_\Phi(\sg(u), \bopensetRn\times\R)\le 
P_\Phi(\sg(u+\test),\bopensetRn\times\R)$ and
so, by virtue of \eqref{minP},
\begin{equation}\label{eq1234}
\cG(u,\An)\le \cG(u+\test,\An). 
\end{equation}
For general $\test \in BV_\loc(\opensetRn)$ inequality \eqref{eq1234} can 
be proven by approximation.
\smallskip

(b) Let $u$ be a minimizer of
$\cG$ and  $F \in BV_{{\loc}}(\Omega)$ 
be such that $\sg(u)\Delta F\strictlyincluded A=\An\times(-M,M)$ with
$\An\in\cK(\opensetRn)$ and $M>0.$ Then \eqref{impassump} yields that
$$P_\Phi(F, \Bn\times \R) <+\infty \qquad \forall \Bn\in\cK(\opensetRn).
$$

We shall closely follow \cite{Gi:84, MM1964}, where
the argument is done in the Euclidean setting. 
For simplicity let $\phisec_1^o(\cdot)=\Phi^o(\cdot,0)$
and $\phisec_2^o(\cdot)=\Phi^o(0,\cdot).$ 
We claim that there exists $v\in BV_{\loc}(\opensetRn)$ with
$\supp (u-v)\strictlyincluded \An$ such that for any $\Bn\in \cK(\opensetRn)$
\begin{equation}\label{partper2}
\begin{aligned}
\int_{\Bn\times\R} \phisec_1^o(D_x\chi_{\sg(v)}) \le  \int_{\Bn\times\R} \phisec_1^o(D_x\chi_F),\\
\int_{\Bn\times\R} \phisec_2^o(D_t\chi_{\sg(v)})) \le  \int_{\Bn\times\R} \phisec_2^o(D_t\chi_F).
 \end{aligned}
\end{equation}
Supposing that the claim is true,
from \eqref{partper2} and from Lemma \ref{lem:PartmonNorm}
we deduce
$$
P_\Phi(\sg(v),\An\times\R) = \int_{\An\times\R} \Phi^o(D\chi_{\sg(v)}) 
\le \int_{\An\times\R} \Phi^o(D\chi_F)=P_\Phi(F,\An\times\R).$$
 Then by the minimality of $u$ and \eqref{minP}
 we get
\begin{align*}
 P_\Phi(\sg(u),\bopensetRn\times\R)=&\int_{\bopensetRn} \Phi^o(-Du,1)
\le \int_{\bopensetRn} \Phi^o(-Dv,1)
\\
 =&P_\Phi(\sg(v),\bopensetRn\times\R)\le P_\Phi(F,\bopensetRn\times\R).
 \end{align*}

Let us prove our claim. Since $\sg(u) \Delta F \subset \subset A,$ we have
\begin{equation}\label{subgraph}
\lim\limits_{t\to +\infty} \chi_F(x,t) = 0,\,\,\,
\lim\limits_{t\to -\infty} \chi_F(x,t) = 1
\quad \text{for a.e. $x\in\opensetRn.$}
\end{equation}
Then,
 by \cite[Lemma 14.7 and Theorem 14.8]{Gi:84} (see also \cite[Theorem 2.3]{MM1964})
the function
$$v_h(x) := \int_{-h}^h \chi_F(x,t) dt -h, \qquad x \in \opensetRn,
$$
belongs to $L_\loc^1(\opensetRn)$ and the sequence $\{v_h\}$ converges
 pointwise
to  $v\in BV_\loc(\opensetRn)$ as $h \to +\infty.$
To show  that $u-v$ is compactly
supported in $\An$ it is enough to
take  $\An'\in\cK(\opensetRn)$
such that $\An'\strictlyincluded\An$ and 
$\sg(u) \Delta F \subset \subset \An'\times(-M,M),$ and to observe that
since $\sg(u)\cap \big((\opensetRn\setminus \An')\times\R\big) =
F\cap \big((\opensetRn\setminus \An')\times\R\big),$ if $x\in \opensetRn\setminus \An',$
for $h$ sufficiently large  we have
\begin{align*}
v_h(x)=& \int_{-h}^{u(x)} \chi_F(x,t)dt -h= u(x).
\end{align*}

Now, define  $\eta_h:\R\to [0,+\infty)$ as $\eta_h := 1$ on $[-h,h],$
$\eta_h :=0$ on $(-\infty, -h-1]\cup [h+1,+\infty),$ and
\begin{equation*}
\eta_h(t) :=
\begin{cases}
h+1-t & {\rm if}~ h\le t\le h+1,\\
h+1+t & {\rm if}~ -h-1\le t\le -h.
\end{cases}
\end{equation*}
Being
 $1/2=\int_{-h-1}^{-h}[h+1+t]dt,$ we have
\begin{align*}
\Big| \int_\R \eta_h(t)\chi_F(x,t)dt -h - & \frac12 - v(x)\Big| 
 =  \Big|-\int_{-h-1}^{-h}[h+1+t](1-\chi_F(x,t))dt\\
+ & \int_h^{h+1} [h+1-t]\chi_F(x,t)dt+v_h(x) -v(x)\Big|
\\
 \leq & \Big|v_h(x)  -v(x)\Big|
 + \int_{-h-1}^{-h} (1-\chi_F(x,t))dt +
\int_{h}^{h+1} \chi_F(x,t)dt.
\end{align*}
Hence,
from
 \eqref{subgraph},
for almost every $x\in\opensetRn$ we get
\begin{equation}\label{domconvth}
 \lim\limits_{h\to+\infty}
 \Big| \int_\R \eta_h(t)\chi_F(x,t)dt -h - \frac12 - v(x)\Big| =0. 
\end{equation}
Let us fix $\psi\in C_c^1(\opensetRn)$ and $1\le j\le n.$
Then, using 
$\int_{\opensetRn}  D_{x_j}\psi(x)dx=0,$
the dominated convergence theorem (see \cite{Gi:84,MM1964} for more details)
and \eqref{domconvth}
we find
\begin{align*}
\int_{\opensetRn\times\R} \psi(x) D_{x_j}\chi_F(x,t) = &
\lim\limits_{h\to+\infty}
\int_{\opensetRn\times\R} \eta_h(t) \psi(x) D_{x_j}\chi_F(x,t)
\\
=&- \lim\limits_{h\to+\infty} \int_{\opensetRn} D_{x_j} \psi(x)dx \int_\R
\eta_h(t) \chi_F(x,t) dt
\\
=& - \lim\limits_{h\to+\infty}
\int_{\opensetRn} D_{x_j} \psi(x) \left[\int_\R
\eta_h(t) \chi_F(x,t)dt -h-\frac12\right]~dx\\
=& -\int_{\opensetRn} v(x) D_{x_j}\psi(x)dx.
\end{align*}
Hence for any $\An\in\cK(\opensetRn) $ and $\eta\in C_c^1(\An;B_{\phisec_1})$  one has
\begin{align*}
-\int_{\An} v(x) \sum\limits_{j=1}^n D_{x_j}\eta(x)dx= &
\int_{\An\times\R} \eta(x)\cdot D_x\chi_F(x,t)
\le  \int_{\An\times\R} \phisec_1^o(D_x\chi_F(x,t)).
\end{align*}
Since $\eta$ is
arbitrary, the definition of $\int_{\An} \phisec_1^o(D_{x}v)$  implies
\begin{equation}\label{eq:intphisec}
 \int_{\An} \phisec_1^o(D_{x}v) \le \int_{\An\times\R}
\phisec_1^o(D_{x}\chi_F).
\end{equation}
Being $|D_t\chi_F|$ a counting measure, we have \cite{MM1964}
\begin{equation}\label{eq:phi_2}
\int_{\An\times\R} \phisec_2^o(D_t\chi_F) =    \phisec_2^o(1) \int_{\An\times\R} |D_t\chi_F| \ge
  \phisec_2^o(1) |\An|.
 \end{equation}
Moreover, one checks that
 \begin{equation}\label{eq:onechecks}
\int_{\An\times\R} \phisec_1^o(D_{x}\chi_{\sg(v)}) =
 \int_{\An} \phisec_1^o(D_{x}v),\qquad
 \int_{\An\times\R} \phisec_2^o(D_{t}\chi_{\sg(v)}) =
 \phisec_2^o(1)\,|\An|.
\end{equation}
Now our claim \eqref{partper2} follows from \eqref{eq:intphisec}-\eqref{eq:onechecks}.
\end{proof}

\begin{corollary}\label{cor:locally finiteness}
Let $\Phi^o: ({\R^{n+1}})^* \to [0,+\infty)$ 
be a partially monotone norm and $u\in \mathcal M_{\Phi^o}(\opensetRn).$
Then $u \in L^\infty_{\rm loc}(\opensetRn).$
\end{corollary}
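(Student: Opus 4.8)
The plan is to show that a minimizer $u\in\mathcal M_{\Phi^o}(\opensetRn)$ is locally bounded by combining the minimality property with a comparison against linear functions. The key observation is that linear functions belong to $\mathcal M_{\Phi^o}(\opensetRn)$ (as noted right after Definition \ref{def:local_minimizer_of_G}), and, crucially, the subgraph of a linear function is a half-space, which by Example \ref{ex:half-spaces} is a \emph{global} minimizer of $P_\Phi$. Since $\Phi$ is partially monotone, Theorem \ref{teo:Subgraphs}(b) tells us that $\sg(u)$ is a minimizer of $P_\Phi$ in $\opensetRn\times\R$, so I can work geometrically with the subgraph and use comparison sets built from half-spaces.

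First I would fix a ball $B_{2r}(x_0)\strictlyincluded\opensetRn$ and aim to bound $u$ on $B_r(x_0)$ from above (the lower bound is symmetric, or follows by replacing $E$ with its complement, which is again a minimizer by the remark after \eqref{perii}). The strategy is a barrier/comparison argument: if $u$ were very large on a set of positive measure inside $B_r(x_0)$, I would cap off $\sg(u)$ with a horizontal hyperplane at a high level $L$, i.e. compare $\sg(u)$ with the set $F_L := \sg(u)\cap\{t<L\}$ (equivalently, replace $u$ by $\min\{u,L\}$). Because $\sg(u)$ is a minimizer of $P_\Phi$, the perimeter cost of the new ``lid'' $\{t=L\}\cap(\{u>L\})$, whose $\Phi^o$-weight is $\Phi^o(0,1)$ times its horizontal area, must be compensated by the saved lateral perimeter. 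The saved lateral area is controlled below by an isoperimetric-type inequality in the slab, which forces the measure of $\{u>L\}\cap B_r(x_0)$ to decay as $L\to+\infty$. Quantitatively, one gets a differential inequality (or a De Giorgi-type iteration) for the distribution function $\mu(L):=|\{x\in B_r(x_0):u(x)>L\}|$ that yields $\mu(L)=0$ for $L$ large enough, giving the essential upper bound.

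Concretely, I would set up the minimality inequality $P_\Phi(\sg(u),B_r(x_0)\times\R)\le P_\Phi(\sg(\min\{u,L\}),B_r(x_0)\times\R)$; using \eqref{minP} and the splitting into horizontal and vertical contributions (exactly as in the proof of Theorem \ref{teo:Subgraphs}(b)), the vertical term on the left exceeds the vertical term on the right by $\Phi^o(0,1)\,\mu(L)$, while on the right the horizontal contribution over the region $\{u>L\}$ is discarded. Invoking \eqref{comparison} for $\Phi^o$ (so that $\Phi^o\ge c|\cdot|$) turns the horizontal perimeter into a genuine Euclidean perimeter, and the classical relative isoperimetric inequality in the ball then bounds $\mu(L)^{(n-1)/n}$ by a constant times $-\mu'(L)$ for a.e.\ $L$. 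This gives $\mu(L)\equiv0$ beyond a finite level, whence $\esssup_{B_r(x_0)}u<+\infty$.

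The main obstacle I anticipate is making the comparison sharp enough that the isoperimetric gain strictly beats the $\Phi^o(0,1)$ lid cost, and handling the \emph{anisotropic} lateral perimeter correctly: the bound \eqref{comparison} for $\Phi^o$ is exactly what lets me pass from the $\Phi^o$-perimeter to the Euclidean one so that the standard relative isoperimetric inequality applies, but one must check the constants do not conspire against the iteration. A secondary technical point is justifying the cap-off comparison at the level of $BV$ functions (the set $\sg(\min\{u,L\})$ differs from $\sg(u)$ only by a compactly supported perturbation once $L$ exceeds $\esssup$ near $\partial B_r(x_0)$, which holds for all but a null set of radii $r$ by Fubini). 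Once the decay of $\mu(L)$ is established, local boundedness follows immediately, and the same argument applied to $\opensetRn\setminus\sg(u)$ (or to $-u$) yields the lower bound.
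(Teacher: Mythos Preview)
Your first move—invoking Theorem \ref{teo:Subgraphs}(b) to pass from $u\in\cM_{\Phi^o}(\opensetRn)$ to the $P_\Phi$–minimality of $\sg(u)$ in $\opensetRn\times\R$—is exactly the paper's starting point. The divergence, and the gap, is in the geometric step that follows.

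The truncation comparison does not produce the inequality you claim. Write out $P_\Phi(\sg(u),\An\times\R)\le P_\Phi(\sg(\min\{u,L\}),\An\times\R)$ via \eqref{minP}: the common part over $\{u\le L\}$ cancels and you are left with
\[
\int_{\{u>L\}}\Phi^o(-Du,1)\ \le\ \Phi^o(0,1)\,\mu(L).
\]
But partial monotonicity of $\Phi^o$ (hence the generalized–graph property, Lemma \ref{lem:AdjointPropertyG}) gives $\Phi^o(-Du,1)\ge\Phi^o(0,1)$ pointwise, so the left–hand side is already $\ge\Phi^o(0,1)\mu(L)$. You obtain only the tautology $\Phi^o(0,1)\mu(L)\le\Phi^o(0,1)\mu(L)$; there is no ``vertical gain'' of $\Phi^o(0,1)\mu(L)$ (indeed, for \emph{any} subgraph $\sg(v)$ one has $\int_{\An\times\R}\Phi^o(0,D_t\chi_{\sg(v)})=\Phi^o(0,1)|\An|$, so the vertical terms for $\sg(u)$ and $\sg(\min\{u,L\})$ are equal, cf.\ \eqref{eq:onechecks}). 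Consequently the ``saved lateral perimeter'' does not dominate the lid cost, and no decay of $\mu$ follows. A second issue: the differential inequality $\mu(L)^{(n-1)/n}\lesssim -\mu'(L)$ conflates two different objects—$-\mu'(L)$ is the density of the distribution function of $u$, whereas the relative isoperimetric inequality bounds $\mu(L)^{(n-1)/n}$ by the \emph{perimeter} $P(\{u>L\},\An)$, and for a $BV$ function these are unrelated. Finally, the compact–support requirement $\{u>L\}\cap B_r(x_0)\strictlyincluded\An$ presupposes an $L^\infty$ bound on $u$ near $\partial B_r(x_0)$; your appeal to Fubini gives only $u\in L^1(\partial B_r(x_0))$ for a.e.\ $r$, not boundedness, so the argument is circular.

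The paper avoids all of this by working one dimension up. Once $\sg(u)$ is a $P_\Phi$–minimizer in $\opensetRn\times\R$, the \emph{anisotropic density estimates} (valid for any norm, see e.g.\ \cite[Proposition~1.10]{Me:15}) give a constant $c>0$ such that $|\sg(u)\cap B_\rho(z)|\ge c\rho^{n+1}$ for every $z\in\partial\sg(u)$ and every $\rho<\dist(z,\partial(\opensetRn\times\R))$. If $u$ were essentially unbounded above on a compact $K\subset\opensetRn$, one would find boundary points $z_j\in\partial\sg(u)\cap(K\times\{t_j\})$ with $t_j\to+\infty$; the density lower bound then forces $|\sg(u)\cap(K_\rho\times(t_j-\rho,+\infty))|\ge c\rho^{n+1}$ for a fixed $\rho>0$, contradicting $\int_{K_\rho}u^+<\infty$. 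This is precisely the argument of \cite[Theorem~14.10]{Gi:84}, transported to the anisotropic setting.
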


\begin{proof}
It follows
repeating essentially
the same arguments in the proof of \cite[Theorem 14.10]{Gi:84},
using Theorem \ref{teo:Subgraphs}(b) and the density estimates
(see for instance \cite[Proposition 1.10]{Me:15} for 
the anisotropic setting).
 \end{proof}

\section{Classification of cartesian minimizers for cylindrical norms}\label{sec:cartesian_minimizers_for_cylindrical_norms}

The aim of this section is to give a rather complete
classification of entire cartesian minimizers, supposing
the norm $\Phi$  cylindrical. As explained in
the introduction, this case covers, in particular,
the study of minimizers of the total variation functional. 
We start with a couple of observations. 

\begin{remark}\label{rem:mult_scalar}
Suppose  that $\Phi: \R^{n+1}\to \oic$ is cylindrical over $\phisec.$
Then 
\begin{equation}\label{eq:mult_scalar}
u\in\cM_{\Phi^o}(\opensetRn) \Longrightarrow \lambda u\in\cM_{\Phi^o}(\opensetRn) \qquad \forall \lambda\in\R,
\end{equation}
since
\begin{equation*}
\cG(v, \bopensetRn)=\int_{\bopensetRn} \phisec^o(Dv)+ |\An|,
\qquad
(v,\bopensetRn)\in BV_{\rm loc}(\opensetRn)\times \cK(\opensetRn).
\end{equation*}
On the other hand,
\eqref{eq:mult_scalar} is expected to hold not for all
non cylindrical norms $\Phi.$
For example, let $\Phi$ be Euclidean, $n\ge 8$ and
$u:\R^n\to\R$ be a smooth nonlinear solution \cite{BDG:69}
of the minimal surface equation
${\rm div} \left( \frac{\nabla u}{\sqrt{1+|\nabla u|^2}} \right) =0.$
Then\footnote{$u$ is a minimizer of
${\mathcal G}_{\Phi^o}$ in  $\Rn,$
since the Euclidean unit normal (pointing upwards) to ${\rm graph}(u),$
constantly extended in the $e_{n+1}$ direction, provides a
calibration for ${\rm graph}(u)$ in the whole of
$\R^{n+1}.$} 
$u\in\cM_{\Phi^o}(\R^n),$ but if $\Delta u \vert
\nabla u\vert^2$ is not identically zero, then 
 $\lambda u\notin\cM_{\Phi^o}(\R^n)$ for any $\lambda\in \R\setminus\{ 0,\pm 1\}.$
Indeed, otherwise  $\lambda u$ solves the minimal surface equation, hence
\begin{align*}
0=&\lambda
{\rm div}\left(\frac{\nabla u}{\sqrt{1+\lambda^2|\nabla u|^2}}\right)
\\
=& \frac{\lambda}{\sqrt{1+\lambda^2|\nabla u|^2}}
\left(\Delta u -\frac{\lambda^2}{{1+\lambda^2|\nabla u|^2}}
 \sum\limits_{i,j=1}^n 
\nabla_i u\cdot \nabla_j u
\nabla_{ij}u\right)
\\
=& \frac{\lambda}{\sqrt{1+\lambda^2|\nabla u|^2}}\left(\Delta u -
\lambda^2\Delta u \frac{1+|\nabla u|^2}{1+\lambda^2|\nabla u|^2}\right)
=\frac{\lambda(1-\lambda^2)\Delta u \,|\nabla u|^2}{(1+\lambda^2|\nabla u|^2)^{3/2} }.
\end{align*}
If $\Delta u|\nabla u|^2$ is not identically zero, we get
$\lambda(1-\lambda^2)=0,$ a contradiction.
\end{remark}

\begin{remark}\label{rem:max_min}
Suppose  that $\Phi: \R^{n+1}\to \oic$ is cylindrical over $\phisec.$
Then
\begin{equation}\label{eq:sections}
 u\in\cM_{\Phi^o}(\opensetRn)
~ \Longrightarrow~ \max\{u,\level\},\,\min\{u,\level\}\in \cM_{\Phi^o}(\opensetRn) \quad \forall\level\in\R.
\end{equation}
Indeed, suppose first  $\level=0.$ If
$\{u\ge 0\}\in BV_\loc(\opensetRn),$ 
then \eqref{eq:sections} can be proven as in
\cite[Lemma 3.5]{NoPa:02}, using
\cite[Theorem 3.84]{AFP:00}.
In the general case,
 by the coarea formula there exists a sequence $\level_j\uparrow 0$ such that
$\{u\ge \level_j\}\in BV_\loc(\opensetRn).$
Clearly $u_j :=u-\level_j\in\cM_{\Phi^o}(\opensetRn),$ hence
$u_j^+\in\cM_{\Phi^o}(\opensetRn).$ Since $u_j^+\to u^+$ in $L_\loc^1(\opensetRn),$
Theorem \ref{teo:compactness}  implies  $u^+\in\cM_{\Phi^o}(\opensetRn).$
The case $\level \neq 0$ is implied by the
previous proof  and the identity
$\max\{u,\level\} = (u-\level)^++\level.$
The relation $\min\{u,\level\}\in\cM_{\Phi^o}(\opensetRn)$ then 
follows from the identity
$\min\{u,\level\} = -\max\{-u,-\lambda\}$ and 
from Remark \ref{rem:mult_scalar}.
\end{remark}

Further properties of cartesian minimizers are
listed in the following proposition, which in particular (when $\phisec$ is Euclidean) asserts
some properties of minimizers of the total variation functional \cite{Cham:09}.

\begin{proposition}[\textbf{Cartesian minimizers for cylindrical norms}]\label{prop:cartesian_min_cyl}
Suppose  that $\Phi: \R^{n+1}\to \oic$ is cylindrical over $\phisec.$
The following assertions hold:
\begin{itemize}
\item[(a)] 
if $u\in\cM_{\Phi^o}(\opensetRn)$
and $\level \in \R$ then
$\chi_{\{u > \level\}}, \chi_{\{u \geq \level\}} 
\in \cM_{\Phi^o}(\opensetRn)$;
\item[(b)] 
if $\En\subset\opensetRn$ and
$\chi_{\En}\in\cM_{\Phi^o}(\opensetRn)$
then $\En$ is a minimizer of $P_\phisec$  in $\opensetRn;$
\item[(c)] 
if $u\in\cM_{\Phi^o}(\opensetRn)$ and $\level\in\R$ then
$\{u>\level\}$ and $\{u\ge \level\}$ are
minimizers  of $P_\phisec$ in $\opensetRn$;
\item[(d)]  if $u\in BV_\loc(\opensetRn)$
and for almost every $\level \in\R$ the sets  $\{u>\level\}$ 
(resp. $\{u\ge \level\}$) are minimizers of 
$P_\phisec$ in $\opensetRn,$ then 
$u\in \cM_{\Phi^o}(\opensetRn)$;
\item[(e)] 
if $u\in\cM_{\Phi^o}(\opensetRn)$ and
$f : \R \to \R$ is monotone
then $f\circ u \in\cM_{\Phi^o}(\opensetRn)$;
\item[(f)]
let $\direction\in\R^n,$ $f:\R\to\R$ be
a  monotone function, and define
$u(x) := f(x\cdot \direction)$ for any $x\in \opensetRn.$ Then $u \in\cM_{\Phi^o}(\opensetRn).$
\end{itemize}
\end{proposition}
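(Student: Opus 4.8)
The plan is to reduce everything to the total variation by means of the identity $\cG(v,\An)=TV_\phisec(v,\An)+|\An|$ recorded in Remark \ref{rem:mult_scalar}: since the additive term $|\An|$ is independent of the competitor, $\cM_{\Phi^o}(\opensetRn)$ is precisely the class of minimizers of $TV_\phisec$, and for a characteristic function $\cG(\chi_{\En},\An)=P_\phisec(\En,\An)+|\An|$. I would then prove (a), (b) and (d) directly and obtain (c), (e), (f) as formal consequences, so that the real work is confined to the first three.

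For (b) I would argue at once: given $\Fn$ with $\En\Delta\Fn\strictlyincluded\An$, the function $\chi_{\Fn}$ is an admissible competitor for $\chi_{\En}$ in $\cG$, and the two constants $|\An|$ cancel, leaving $P_\phisec(\En,\An)\le P_\phisec(\Fn,\An)$. For (a) I would not invoke the coarea formula but instead realize $\chi_{\{u>\level\}}$ as an $L^1_\loc$-limit of minimizers: for $\mu>\level$ set $w_\mu:=(\mu-\level)^{-1}\big(\min\{\max\{u,\level\},\mu\}-\level\big)$, which lies in $\cM_{\Phi^o}(\opensetRn)$ by the truncation property of Remark \ref{rem:max_min}, the translation invariance noted after Definition \ref{def:local_minimizer_of_G}, and the scaling property of Remark \ref{rem:mult_scalar}; letting $\mu\downarrow\level$ along a sequence one checks $w_\mu\to\chi_{\{u>\level\}}$ pointwise, hence in $L^1_\loc(\opensetRn)$, so Theorem \ref{teo:compactness} gives $\chi_{\{u>\level\}}\in\cM_{\Phi^o}(\opensetRn)$. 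The set $\{u\ge\level\}$ is treated symmetrically, now taking $\mu\uparrow\level$. Part (c) then follows by combining (a) and (b).

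For (d) I would use the anisotropic coarea formula $TV_\phisec(w,\An)=\int_\R P_\phisec(\{w>s\},\An)\,ds$. Since $\supp(u-v)\strictlyincluded\An$, for every $s$ one has $\{u>s\}\Delta\{v>s\}\subseteq\supp(u-v)\strictlyincluded\An$, so $\{v>s\}$ is an admissible competitor for $\{u>s\}$; the hypothesis gives $P_\phisec(\{u>s\},\An)\le P_\phisec(\{v>s\},\An)$ for a.e. $s$, and integrating in $s$ yields $TV_\phisec(u,\An)\le TV_\phisec(v,\An)$, i.e. $\cG(u,\An)\le\cG(v,\An)$. Part (e) follows by noting that, $f$ being monotone, each superlevel set of $f\circ u$ equals $\emptyset$, $\opensetRn$, a set $\{u>s\}$, $\{u\ge s\}$, or the complement of one of these; all are $P_\phisec$-minimizers by (c) (complements of minimizers are minimizers, cf. the remark after Definition \ref{def:local_minimizer_of_F}), whence (d) applies, provided $f\circ u\in BV_\loc(\opensetRn)$. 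Finally (f) is immediate: $x\mapsto x\cdot\direction$ is linear and hence lies in $\cM_{\Phi^o}(\opensetRn)$ (its subgraph is a half-space, see the footnote after Definition \ref{def:local_minimizer_of_G}), so $u=f\circ(x\cdot\direction)\in\cM_{\Phi^o}(\opensetRn)$ by (e).

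The main obstacle I anticipate is the $BV_\loc$ membership needed in (e): a monotone $f$ may be unbounded and discontinuous, so before invoking (d) I must verify $f\circ u\in BV_\loc(\opensetRn)$. The natural route is first to record that $u\in L^\infty_\loc(\opensetRn)$ — which holds because $\Phi$ cylindrical makes $\Phi^o$ conical, hence partially monotone (the $p=1$ case of Example \ref{ex:Part_mon}), so that Corollary \ref{cor:locally finiteness} applies — and then to observe that on each relatively compact set $f$ is a bounded monotone function of the bounded $BV$ function $u$, whence $f\circ u\in BV_\loc$ by the standard chain rule for monotone compositions. A secondary point to keep in mind in (d) is that the slicing inequality is only needed for a.e. $s$, which is exactly the form of the hypothesis.
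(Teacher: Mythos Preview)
Your proofs of (b), (c), (d), (f) are correct and coincide with the paper's. For (a) you give a self-contained argument (truncate, rescale, pass to the limit via Theorem~\ref{teo:compactness}) where the paper simply cites \cite[Theorem~1]{BDG:69}; your route is a pleasant alternative and uses only tools already established in the paper.

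The one genuine gap is in (e). You correctly identify that the only obstacle to applying (d) is establishing $f\circ u\in BV_\loc(\opensetRn)$, but the justification you propose --- a ``standard chain rule for monotone compositions'' --- does not exist in the generality you invoke. For $u\in BV\cap L^\infty$ and $f$ bounded monotone, $f\circ u$ can fail to be $BV$: take disjoint balls $B_k\subset(0,1)^2$ with radii $c/k$, set $u=\sum_{k\ge1}2^{-k}\chi_{B_k}\in BV((0,1)^2)$, and $f=\chi_{(0,\infty)}$; then $f\circ u=\chi_{\{u>0\}}=\chi_{\cup B_k}$ has infinite perimeter. The standard chain rules (Vol'pert, Ambrosio--Dal~Maso) require the outer function to be Lipschitz.

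The paper avoids this by first treating $f$ Lipschitz and strictly increasing --- where the Lipschitz chain rule gives $f\circ u\in BV_\loc$ at once, so (c) and (d) apply --- and then approximating a general monotone $f$ by such functions and invoking Theorem~\ref{teo:compactness}. Your direct route can also be repaired, but it needs an additional observation specific to this setting: by (c) every set $\{u>t\}$ is a $P_\phisec$-minimizer, hence the usual comparison with balls gives a bound $P_\phisec(\{u>t\},\An)\le C(\An)$ \emph{uniform in $t$}; combined with $f\circ u\in L^\infty_\loc$ (from Corollary~\ref{cor:locally finiteness}, as you note) the coarea formula then yields $TV_\phisec(f\circ u,\An)\le |\,\mathrm{range}(f\circ u|_{\An})\,|\cdot C(\An)<\infty$. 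Either fix closes the gap.
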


Clearly, assertion (e) generalizes \eqref{eq:mult_scalar} and \eqref{eq:sections}.
We also anticipate here that the converse of statement (f) is considered in Theorem \ref{teo:Euc} below.

\begin{proof}
The proof of (a) 
is the same as in \cite[Theorem 1]{BDG:69}
and (b) is immediate.
(c) follows from (a) and (b), while (d) follows from the coarea
formula
$$
\int_{\An}\phisec^o(Dv) = \int_\R P_\phisec(\{v>\lambda\},\An)\,d\lambda,
\qquad v \in BV(\An).
$$
Let us prove (e). Without loss of generality assume that $f$ is nondecreasing.
Suppose first that $f$ is Lipschitz and strictly increasing.
Set $v=f\circ u,$ and let
$\level\in\R.$
Since $u\in\cM_{\Phi^o}(\opensetRn),$
by (c) it follows that  $\{u\ge f^{-1}(\level)\}$ and $\{u>f^{-1}(\level)\}$
are minimizers
of $P_\phisec$ in $\opensetRn,$ hence 
$\{v>\level\}$ are minimizers of $P_\phisec$ in $\opensetRn.$ Then (d) implies
 $v\in \cM_{\Phi^o}(\opensetRn).$ In the general case,
it is sufficient to approximate $f$ with a sequence of 
strictly increasing Lipschitz functions, and use Theorem \ref{teo:compactness}.
(f) follows from (e), since the
linear function $u_0(x)=x\cdot\direction,$ $x\in\opensetRn,$ is a minimizer
of $\cG$ in $\opensetRn.$
\end{proof}

Now, we show that Proposition \ref{prop:cartesian_min_cyl}(f) implies the minimality of certain cones; 
the same conclusion could be obtained by applying Remark \ref{rem:minimal_sections}.

\begin{proposition}[\textbf{Cones minimizing the anisotropic perimeter}]
\label{prop:area_minimizing_cones}
Suppose that $\Phi: \R^{n+1} \to \oic$ is cylindrical over
$\phisec.$
Let $H_1, H_2 \subset \R^{n+1}$ be two half-spaces, with outer
unit normals $\nu_1,\nu_2\in\S^{n}$ respectively. Suppose that
\begin{equation}\label{eq:partial_H_i}
\{0\} \in \p H_1\cap \p H_2 \subset \{t=0\},
\end{equation}
and that
\begin{itemize}
 \item[(a)] $\nu_1\cdot\nu_2\ge0,$  $\nu_2\cdot e_{n+1}\ge \nu_1\cdot e_{n+1}\ge0$;
 \item[(b)] $\arccos(\nu_1\cdot\nu_2)+ \arccos(\nu_2\cdot e_{n+1} ) = \arccos(\nu_1\cdot e_{n+1}).$
\end{itemize}
Then the cones $E:= H_1\cap H_2$ and $F := H_1\cup H_2$
are minimizers of $P_\Phi$ in  $\R^{n+1}.$
\end{proposition}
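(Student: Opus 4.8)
The plan is to reduce the statement to Proposition~\ref{prop:cartesian_min_cyl}(f) by exhibiting $E$ and $F$ as subgraphs of monotone functions of a single horizontal variable. The crucial geometric observation is that hypothesis (b) forces the three unit vectors $e_{n+1},\nu_1,\nu_2$ to be coplanar. Indeed, setting $\alpha_i:=\arccos(\nu_i\cdot e_{n+1})\in[0,\pi/2]$ (so that $\alpha_2\le\alpha_1$ by (a)) and $\theta:=\arccos(\nu_1\cdot\nu_2)$, condition (b) reads $\theta=\alpha_1-\alpha_2$, which is exactly the equality case of the triangle inequality for the spherical triangle on $\S^{n}$ with vertices $e_{n+1},\nu_1,\nu_2$. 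Hence these vertices lie on a common great circle, i.e. in a $2$-plane $\Pi\ni e_{n+1}$, with $\nu_2$ on the arc between $e_{n+1}$ and $\nu_1$. Writing $\direction\in\R^n$ for a unit vector spanning the horizontal line $\Pi\cap\{t=0\}$, oriented so that $\nu_i\cdot(\direction,0)\ge0$, I obtain
\[
\nu_i=\cos\alpha_i\,e_{n+1}+\sin\alpha_i\,(\direction,0),\qquad i=1,2 .
\]
In particular $\widehat{\nu_1}$ and $\widehat{\nu_2}$ are parallel, both proportional to $\direction$.

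Next I would treat the main case $\alpha_1,\alpha_2\in[0,\pi/2)$, where both normals have a strictly positive vertical component. Since $0\in\p H_i$ and $(\nu_i)_t>0$, each half-space is a subgraph, $H_i=\sg(u_i)$ with the linear function $u_i(x)=-\tan\alpha_i\,(x\cdot\direction)$. Setting $s:=x\cdot\direction$, one then has $E=\sg(g)$ and $F=\sg(h)$, where $g(s)=\min\{-\tan\alpha_1 s,-\tan\alpha_2 s\}$ and $h(s)=\max\{-\tan\alpha_1 s,-\tan\alpha_2 s\}$ depend only on $s$. Because $0\le\tan\alpha_2\le\tan\alpha_1$, both $g$ and $h$ are continuous, piecewise linear with two pieces of nonpositive slope meeting at $s=0$, hence nonincreasing; thus $x\mapsto g(x\cdot\direction)$ and $x\mapsto h(x\cdot\direction)$ are of the form treated in Proposition~\ref{prop:cartesian_min_cyl}(f) and therefore belong to $\cM_{\Phi^o}(\R^n)$. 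Since a cylindrical norm is partially monotone (Example~\ref{ex:Part_mon}), Theorem~\ref{teo:Subgraphs}(b) yields that the subgraphs $E$ and $F$ are minimizers of $P_\Phi$ in $\R^{n+1}$.

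It remains to dispose of the degenerate configurations in which some normal is horizontal. If $\alpha_1=\alpha_2=\pi/2$, then $\theta=0$, so $\nu_1=\nu_2$, $E=F=H_1$ is a single half-space, and minimality follows from Example~\ref{ex:half-spaces}. If $\alpha_1=\pi/2>\alpha_2$, the half-space $H_1$ is vertical and the subgraph representation breaks down; here I would instead verify the hypotheses of Remark~\ref{rem:minimal_sections}. Indeed, from the parallelism of $\widehat{\nu_1},\widehat{\nu_2}$ every horizontal section $E_t$ (resp. $F_t$) is the intersection (resp. union) of two half-spaces of $\R^n$ with normal proportional to $\direction$, hence again a half-space of $\R^n$, which is a minimizer of $P_\phisec$ by Example~\ref{ex:half-spaces}; and every vertical section is a half-line, empty, or all of $\R$, i.e. a minimizer of the Euclidean perimeter in $\R$ by \eqref{eq:1_dimensional_minimizers}. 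Remark~\ref{rem:minimal_sections} then gives the claim. The same section argument in fact proves the proposition in all cases simultaneously, as announced; alternatively, the degenerate case can be recovered from the non-degenerate one by approximating $\nu_1$ with normals of small positive vertical component and passing to the limit via Theorem~\ref{teo:compactness}.

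The main obstacle I anticipate is the first, geometric step: correctly extracting from the single scalar identity (b) the full coplanarity of $e_{n+1},\nu_1,\nu_2$, together with the correct orientation of $\direction$, so that the resulting $g,h$ are genuinely monotone rather than merely piecewise linear. Once the horizontal normals are seen to be parallel, the reduction to Proposition~\ref{prop:cartesian_min_cyl}(f) (and the handling of the vertical case via sections) is routine.
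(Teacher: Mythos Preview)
Your argument is correct and in the main case ($\alpha_1,\alpha_2<\pi/2$) it is essentially identical to the paper's: both write $E$ and $F$ as subgraphs of piecewise-linear monotone functions of $x\cdot\direction$ and then invoke Proposition~\ref{prop:cartesian_min_cyl}(f) together with Theorem~\ref{teo:Subgraphs}(b). Your explicit derivation of the coplanarity of $e_{n+1},\nu_1,\nu_2$ from (b) via the equality case of the spherical triangle inequality makes transparent what the paper obtains instead from the hypothesis $\p H_1\cap\p H_2\subset\{t=0\}$ together with~(b).

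The one substantive difference is the degenerate case $\alpha_1=\pi/2>\alpha_2$. The paper stays within the subgraph framework: it restricts to each slab $S_m=\R^n\times(-m,m)$ and produces a bounded monotone step/piecewise-linear function $h$ whose subgraph agrees with $F$ (resp.\ $E$) on $S_m$, so that minimality in every slab follows again from Proposition~\ref{prop:cartesian_min_cyl}(f). You instead switch to the section criterion of Remark~\ref{rem:minimal_sections}; this is equally valid (indeed the paper notes, just before the statement, that the whole proposition can be obtained that way) and arguably shorter, while the paper's choice keeps the proposition a pure consequence of Proposition~\ref{prop:cartesian_min_cyl}(f). One small caveat: your alternative ``approximate $\nu_1$ and pass to the limit via Theorem~\ref{teo:compactness}'' is not literally applicable, since that theorem concerns $L^1_{\loc}$ limits of functions in $\cM_{\Phi^o}$ and the approximating functions blow up; the correct version uses $L^1_{\loc}$ compactness of the \emph{sets}, which is standard but not Theorem~\ref{teo:compactness}.
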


Before proving the proposition, some comments are in order. 
Our assumptions on $H_1$ and $H_2$ exclude, in particular,
 that $E$ is a ``roof-like''
cone (as the one depicted in Figure \ref{Fig:rooflike_cone}). More specifically, in case $\nu_1
\neq \nu_2,$
the inclusion
$\partial H_1 \cap \partial H_2 \subset \{t=0\}$ in \eqref{eq:partial_H_i}
implies that the orthogonal
complement to $\{t=0\}$ is contained in the span of the orthogonal
complements of $\partial H_i,$ {\it i.e.}
$$
e_{n+1} \in {\rm span}(\nu_1,\nu_2).
$$
\begin{figure}
\includegraphics[scale=0.4]{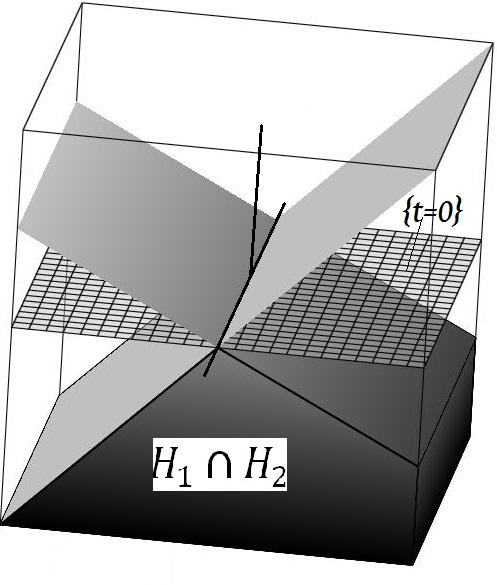}\qquad
\includegraphics[scale=0.4]{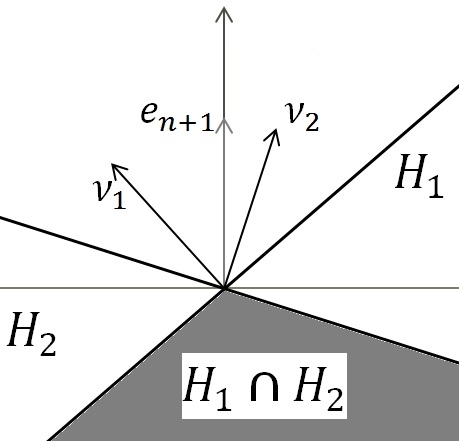}
\caption{``Roof'' like cone (left) and its section (right) along $(\p H_1\cap\p H_2)^\perp.$}
\label{Fig:rooflike_cone}
\end{figure}

Next, assumption (a) implies that $\nu_1$ and $\nu_2$ lie
``on the same side'' with respect to $e_{n+1},$  while assumption
(b) implies that $\nu_2$ lies between $\nu_1$ and $e_{n+1}$ (a condition
not satisfied in  Figure \ref{Fig:rooflike_cone}, and 
satisfied in Figure 
\ref{Fig:minCones}). 
We shall see in Example \ref{ex:non_minimal_cones}
that,  if condition (b) is not satisfied, then $E$ and $F$ need  not
be minimizers.
\begin{proof}
{}For 
$i=1,2,$
define
$$\lambda_i:=
\begin{cases}
\dfrac{\sqrt{1- (\nu_i\cdot e_{n+1})^2}}{\nu_i\cdot e_{n+1}} 
=\tan\alpha_i & {\rm if}~  \nu_i\cdot e_{n+1}\ne 0,
\\
+\infty & {\rm if}~ \nu_i\cdot e_{n+1}=0,
\end{cases}
$$ 
see Figure \ref{Fig:minCones}, left. By (a)
we have
$\lambda_2\le \lambda_1.$ 
Let $\widehat{\nu} \in\S^{n-1} \subset \{t=0\}$ 
be a unit normal 
to $\p H_1\cap\p H_2$ which, 
 according to (b), can be chosen so that
$
(\widehat{\nu},0)\cdot \nu_i\le 0,\quad i=1,2. 
$

If $\lambda_2=+\infty,$ then by conditions (a) 
and (b) we have $H_1=H_2=H,$
where $H$ is the half-space whose outer unit normal 
is $-(\widehat{\nu},0).$ By Example \ref{ex:half-spaces}
it follows that $H=E=F$  is a minimizer of $P_\Phi.$

Assume that $\lambda_2\le\lambda_1<+\infty.$ 
Define
$$
f(\sigma) := \begin{cases}
\lambda_2\sigma,&\sigma \ge 0\\
\lambda_1 \sigma,&\sigma <0
\end{cases},\qquad 
g(\sigma) := \begin{cases}
\lambda_1\sigma,&\sigma \ge 0\\
\lambda_2 \sigma,&\sigma <0
\end{cases}.
$$
Then $E=\sg(u),$ $F=\sg(v),$ where
$u(x) := f(x\cdot \widehat{\nu}),$ $v(x) := g(x\cdot \widehat{\nu}),$ $x\in\R^n.$
Since $f,g$ are {\it monotone}, by Proposition 
\ref{prop:cartesian_min_cyl}(f) we have $u,v\in \cM_{\Phi^o}(\R^n).$
Since $\Phi^o$ is partially monotone (recall Example \ref{ex:Part_mon}),
Theorem \ref{teo:Subgraphs}(b) yields that $E$ and $F$ are minimizers of $P_\Phi$
in $\R^{n+1}.$
\begin{figure}[h]\label{Fig:minCones}
\includegraphics[scale=0.4]{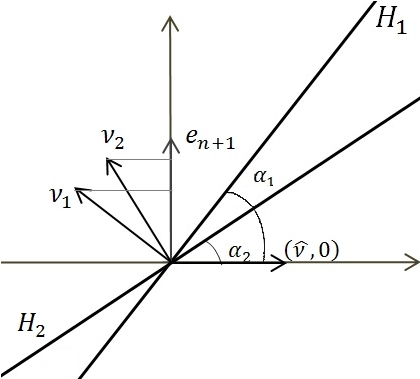}\hspace{2cm}
\includegraphics[scale=0.4]{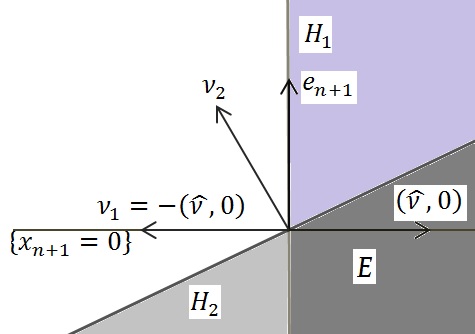}
\caption{Sections of cones when $\lambda_1<+\infty$ and $\lambda_1=+\infty.$} 
\end{figure}
Now, assume that $0\le \lambda_2<\lambda_1=+\infty.$
Then $\nu_1 = -(\widehat{\nu},0).$
We prove that $F$ is a minimizer of  $P_\Phi$ in $\R^{n+1}$
(the proof for $E$ being similar).
It is enough to show minimality of $F$ inside every
strip $S_m=\R^n\times(-m,m),$ $m>0.$
Define $h:\R\to\R$ as
$h(\sigma) := m\chi_{(0,+\infty)}(\sigma)$ 
if $\lambda_2=0$ and
$$
h(\sigma) := \begin{cases}
-m & {\rm if}~ \sigma<-\frac{m}{\lambda_2},
\\
\lambda_2 \sigma & {\rm if}~ -\frac{m}{\lambda_2}\le \sigma <0,
\\
m & {\rm if}~ \sigma \ge 0
\end{cases}
$$
if $\lambda_2>0.$ Let $w(x):=h(x\cdot \widehat{\nu}),$ $x\in\R^n.$
As before, the subgraph $\sg(w)$
of $w$ is a minimizer of $P_\Phi$  in $\R^{n+1}.$
Since $\sg(w)\cap S_m = F\cap S_m,$
it follows that $F$ is a minimizer of $P_\Phi$ in $S_m.$
\end{proof}

\begin{remark}
It is not difficult to see that in Proposition \ref{prop:area_minimizing_cones} 
the assumption $\p H_1\cap \p H_2\subset \{t=0\}$
is in general not necessary. 
Indeed, assume  $n=2,$   $\Phi^o(\xi_1^*,\xi_2^*,\xi_3^*)=|\xi_1^*|+
|\xi_2^*|+|\xi_3^*|$ and  $H_i,$ $i=1,2$ are half-spaces with outer unit normals $
\nu_1=(\frac12,\frac{1}{\sqrt2},\frac12),$ 
$\nu_2= (\frac{1}{\sqrt{10}},\frac{2}{\sqrt{5}},\frac{1}{\sqrt{10}})$ respectively.
Then both $H_1\cap H_2$ and $H_1\cup H_2$ are minimizers of $P_\Phi$ in $\R^3.$
Indeed, for the  Euclidean isometry 
$U(x,t):=(x_1,t,x_2),$ one sees that $UH_1$
and $UH_2$ satisfy the assumptions of Proposition \ref{prop:area_minimizing_cones}, 
hence $UH_1\cap UH_2$ and
$UH_1\cup UH_2$ are minimizers of $P_\Phi.$ Since $\Phi\circ U=\Phi,$ the thesis
follows.
\end{remark}

\begin{example}[\textbf{Non minimal cones}]\label{ex:non_minimal_cones}
Let $n=1,$ $\Phi^o(\xi_1^*,\xi_2^*)= |\xi_1^*|+|\xi_2^*|$  
and $H_1$ and $H_2$ be half-planes of $\R^2$ with outer 
unit normals $\nu_1,\nu_2\in\S^1$ such that
\begin{itemize}
 \item[(a)] $\p H_1\cap\p H_2 = \{0\};$
 \item[(b)] $\nu_1\cdot\nu_2\ge0,$  $\nu_2\cdot e_2\ge \nu_1\cdot e_2\ge0,$ and
 if $\nu_2\cdot e_2 =1$ then $0<\nu_1\cdot e_2<1;$
 \item[(d)] $\arccos(\nu_1\cdot\nu_2)= 
\arccos(\nu_1\cdot e_2)+
\arccos(e_2\cdot \nu_2 ).$ 
\end{itemize}
Then the cones $E:=H_1\cap H_2$ and $F:=H_1\cup H_2$
are not minimizers of $P_\Phi.$ 
Let us  prove the assertion for $E,$ the statement for $F$ being similar.
The lines $\p H_1,$ $\p H_2$ and $\{t=-1\}$
compose a nondegenerate 
triangle $T\subset E$ with sides $a_1,a_2, b>0,$ $b$ the horizontal side.
For any 
$A\in \cK(\R^2)$  with $T\strictlyincluded A$
we have
\begin{align*}
P_\Phi(E,A) - P_\Phi(E\setminus T,A) = a_1\Phi^o(\nu_1)+a_2\Phi^o(\nu_2) - b\Phi^o(e_2)\ge
a_1+a_2-b>0,
\end{align*}
since $\Phi^o(\nu)\ge 1$ for all $\nu\in \S^1.$ 
Hence,  $E$ is not a minimizer of $P_\Phi.$
\end{example}

We shall need the following relevant
result (see for instance 
\cite[Theorem 17.3]{Gi:84} and references therein).

\begin{theorem}\label{teo:giusti} 
Let $\widehat E$ be a minimizer of the Euclidean perimeter
in $\Rn.$ Then either $n \geq 8$ or $\partial \widehat E$ is a hyperplane.
\end{theorem}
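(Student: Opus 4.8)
The plan is to reduce everything to the classification of perimeter-minimizing cones in low dimensions, by blowing $\widehat E$ down at infinity and then exploiting the rigidity in the monotonicity formula. Assume $n\le 7$; I will show $\p\widehat E$ is a hyperplane (for $n\ge 8$ the disjunction holds trivially). First I would recall De Giorgi's interior regularity together with the sharp estimate on the singular set: the reduced boundary $\p^*\widehat E$ is an analytic hypersurface, and the closed singular set $\Sigma:=\p\widehat E\setminus\p^*\widehat E$ satisfies $\cH^s(\Sigma)=0$ for every $s>n-8$. In particular $\Sigma=\emptyset$ when $n\le 7$, so that $\p\widehat E$ is everywhere a smooth minimal hypersurface.

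Next comes the blow-down. For a minimizer the density ratio
$$
\Theta(x,r):=\frac{P(\widehat E,B_r(x))}{\omega_{n-1}r^{n-1}}
$$
is nondecreasing in $r$ by the monotonicity formula, and the lower density bound (the half-space being the optimal tangent competitor) gives $\Theta(x,r)\ge 1$ for every $x\in\p\widehat E$. Using the compactness of minimizers, namely lower semicontinuity of $P$ together with uniform perimeter and volume density bounds, the rescalings $\widehat E_r:=r^{-1}\widehat E$ converge, along a subsequence, in $L^1_{\loc}(\Rn)$ as $r\to+\infty$ to a minimizer $C$. Since the limiting density is constant in the radial variable, $C$ is forced to be a \emph{cone}, i.e.\ a perimeter-minimizing cone in $\Rn$.

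The heart of the matter is the classification of such cones, and this is the step where I expect the real difficulty to lie. Applying the regularity step to $C$, for $n\le 7$ the boundary $\p C$ is a smooth minimal cone, singular at most at the origin. Simons' theorem, combined with the Federer--Almgren--De Giorgi dimension reduction, asserts that there are no nonplanar minimizing cones in $\Rn$ for $n\le 7$; hence $C$ is a half-space. This is precisely where the dimensional restriction is essential: in $\R^8$ the Simons cone $\{x_1^2+\dots+x_4^2=x_5^2+\dots+x_8^2\}$ is an entire nonplanar minimizer, which is why the conclusion genuinely fails for $n\ge 8$. I would simply cite this analytic input rather than reprove it.

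Finally, rigidity. Since the blow-down $C$ is a half-space, $\Theta(x,r)\to 1$ as $r\to+\infty$ for every $x\in\p\widehat E$. As $\Theta(x,\cdot)$ is nondecreasing and bounded below by $1$, we conclude $\Theta(x,r)\equiv 1$ for all $r>0$. Equality in the monotonicity formula forces $\widehat E$ to be a cone with vertex $x$; being a cone with respect to \emph{every} boundary point, $\widehat E$ must be a half-space, so $\p\widehat E$ is a hyperplane. The only genuinely hard ingredient is the cone classification in the third paragraph; the regularity, compactness and monotonicity facts form the standard geometric measure theory package.
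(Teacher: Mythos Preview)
The paper does not prove this theorem at all: it is stated as a known classical result and simply referenced to \cite[Theorem 17.3]{Gi:84}. So there is no ``paper's own proof'' to compare against; the authors use the statement as a black box in the proof of Theorem \ref{teo:Euc}.

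That said, your sketch is the standard route to this classical Bernstein-type theorem and is essentially correct: monotonicity and compactness give a blow-down minimizing cone, Simons' theorem (plus Federer dimension reduction) rules out nonplanar minimizing cones for $n\le 7$, and the rigidity case of the monotonicity formula upgrades the tangent-at-infinity information to $\widehat E$ itself being a half-space. Two small comments. First, the opening regularity paragraph (emptiness of the singular set for $n\le 7$) is not actually needed for the argument you run; the blow-down, cone classification, and density rigidity all work at the level of sets of finite perimeter without first knowing $\p\widehat E$ is smooth. Second, you should handle the degenerate case $\p\widehat E=\emptyset$ (i.e.\ $\widehat E=\emptyset$ or $\widehat E=\Rn$) separately, since then there is no boundary point at which to run the monotonicity argument, and strictly speaking the empty set is not a hyperplane; the statement is usually read modulo this trivial case.
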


Our classification result of minimizers of $\mathcal G_{\Phi^o}$ reads as follows:

\begin{theorem}[\textbf{Entire cartesian minimizers}]\label{teo:Euc}
Suppose that 
$\Phi: \R^{n+1}\to \oic$ is cylindrical over $\phisec.$
Assume one of the two following alternatives:
\begin{itemize}
\item[(a)] $1 \leq n\le 7$ and $\phisec$ is Euclidean;
\item[(b)] $n =2$ and $\phisec^o$ is strictly convex.
\end{itemize}
If $u$ is a minimizer of $\cG$ in $\Rn$
then there exists $\direction\in\S^{n-1}$ and a monotone function $f:\R\to\R$
such that 
\begin{equation}\label{eq:one_variable}
u(x)=f(x\cdot\direction), \qquad  x \in \Rn. 
\end{equation}
\end{theorem}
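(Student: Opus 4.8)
The plan is to reduce the statement to a classification of the superlevel sets of $u$ as minimizers of $P_\phisec$ in $\Rn$, and then to exploit their monotone nesting. First I would apply Proposition \ref{prop:cartesian_min_cyl}(c): since $u\in\cM_{\Phi^o}(\Rn)$, for every $\level\in\R$ the superlevel set $E_\level:=\{u>\level\}$ is a minimizer of $P_\phisec$ in $\Rn$. The heart of the proof is to show that, in both alternatives, every nontrivial $E_\level$ is a half-space.

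In alternative (a) the norm $\phisec$ is a positive multiple of the Euclidean norm, so $P_\phisec$ is a positive multiple of the Euclidean perimeter and each $E_\level$ is a Euclidean perimeter minimizer in $\Rn$. Since $1\le n\le 7$, Theorem \ref{teo:giusti} forces $\partial E_\level$ to be a hyperplane whenever $E_\level$ is neither empty nor all of $\Rn$; hence $E_\level=\{x\cdot\direction_\level>c_\level\}$ for some $\direction_\level\in\S^{n-1}$ and $c_\level\in\R$. In alternative (b), $n=2$ and $\phisec^o$ is strictly convex; here I would invoke the classification of entire planar minimizers of $P_\phisec$ (\cite{NoPa:05}), according to which the only nontrivial minimizers are half-planes. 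Heuristically this is because the $\phisec$-length of a boundary edge with direction vector $\vec e$ equals $\phisec^o(J\vec e)$, with $J$ the rotation by $\pi/2$, a strictly convex norm of $\vec e$; thus along any tangent cone a genuine corner can be strictly shortened by cutting it, which rules out every minimizing cone except half-planes and forces each nontrivial $E_\level$ to be a half-space as well.

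Next I would use the nesting of the superlevel sets. Since $E_{\level'}\subseteq E_\level$ whenever $\level<\level'$, if two of these are proper half-spaces then the inclusion holds up to an $\cH^n$-null set. Two properly nested half-spaces necessarily share the same outward unit normal, since otherwise $E_{\level'}\setminus E_\level$ would have positive measure, contradicting the inclusion; hence $\direction_\level$ does not depend on $\level$. Calling this common value $\direction$, the inclusions also give that $\level\mapsto c_\level$ is nondecreasing. If instead every $E_\level$ is trivial, then $u$ is constant and the conclusion holds with an arbitrary $\direction$ and constant $f$, so I may assume such a common direction $\direction$ exists.

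Finally I would reconstruct $u$. For almost every $x$ and every $\level\in\mathbb{Q}$ one has $u(x)>\level\iff x\cdot\direction>c_\level$, so $u(x)=\sup\{\level\in\mathbb{Q}:\ c_\level<x\cdot\direction\}$ for a.e.\ $x$. Setting $f(s):=\sup\{\level\in\mathbb{Q}:\ c_\level<s\}$, the monotonicity of $\level\mapsto c_\level$ makes $f$ nondecreasing, and $u(x)=f(x\cdot\direction)$ a.e., which is \eqref{eq:one_variable}. The main obstacle is the classification step in alternative (b): in contrast with (a), where the deep ingredient (Theorem \ref{teo:giusti}) is readily available, ruling out all minimizing cones but half-planes and upgrading this to the full planar classification genuinely uses the strict convexity of $\phisec^o$; by comparison, the nesting and reconstruction steps are elementary.
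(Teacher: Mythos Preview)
Your proposal is correct and follows essentially the same route as the paper: classify each superlevel set $\{u>\lambda\}$ as a half-space via Proposition~\ref{prop:cartesian_min_cyl}(c) together with Theorem~\ref{teo:giusti} in case~(a) and \cite[Theorem~3.11]{NoPa:05} in case~(b), use the nesting to extract a common direction $\zeta$, and then reconstruct $f$ from the thresholds. The paper is slightly more careful in one place you gloss over: it explicitly verifies that $f$ is real-valued by extending the threshold function $a_\lambda$ to $\pm\infty$ outside $[\essinf u,\esssup u]$ (and invokes Corollary~\ref{cor:locally finiteness} for local boundedness), whereas your $\sup$ over rational $\lambda$ with $c_\lambda<s$ could a priori be $\pm\infty$ or taken over an empty set at the edges; this is easy to patch but worth stating.
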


\begin{remark}
If $\phisec$ is a noneuclidean smooth and uniformly convex norm,
the conclusion  of Theorem \ref{teo:Euc} under assumption (a) does not
necessarily hold. For example, if $n=4$ and $K$ is the cone over
the Clifford torus \cite{Mo:} -- a minimizer of $P_\phisec$ in $\R^4$ for some uniformly
convex smooth norm $\phisec$ -- then by Proposition 
\ref{prop:cartesian_min_cyl}(d), 
$u=\chi_K$ is a minimizer of $\cG$ in $\R^4$ which cannot be represented as 
in \eqref{eq:one_variable}.
We don't know if there are counterexamples also for $n=3$.
\end{remark}

\begin{proof}
Let $u \in \mathcal M_{\Phi^o}(\Rn).$ 
By Corollary \ref{cor:locally finiteness}, $u \in L^\infty_{\rm loc}(\R^n).$
Let 
$$
c_0 := \essinf\limits_{x\in\Rn} u(x) \in [-\infty,+\infty),\qquad
c_1 := \esssup\limits_{x\in\R^n} u(x) \in (-\infty, +\infty].
$$
If $c_0=c_1,$ then  $u \equiv c_0 $ a.e. on $\R^n.$ In this case
$\direction \in\S^{n-1}$ can be chosen arbitrarily and 
$f\equiv 
c_0.$

Assume that $-\infty\le c_0<c_1\le+\infty.$ 
Given $\lambda 
\in \R,$ 
 Proposition \ref{prop:cartesian_min_cyl}(c) implies that 
$\{u > \level\}$ is a minimizer of $P_\phisec$  in $\R^n.$ 
We claim that 
either $\p^*\{u>\level\}$ is a hyperplane
or $\p^*\{u>\level\}=\emptyset.$
Indeed, if  $n=1$ the claim is trivial. If
 $n=2$ and 
$\phisec$  strictly convex, the claim 
follows from \cite[Theorem 3.11]{NoPa:05}. When 
 $3\le n\le 7$ 
and $\phisec$ is  Euclidean, the claim 
 is implied by Theorem \ref{teo:giusti}.
Hence for any $\level\in (c_0,c_1)$ there exist
$\direction_\level\in\S^{n-1}$ and $a_\level\in\R$ such that
\begin{equation}\label{halfspace}
\{u>\level\} =\{x\in\R^n:\,\, x\cdot\direction_\level <  a_\level\}.
\end{equation}
In addition, these hyperplanes cannot intersect transversely,  
hence there exists $\direction\in\S^{n-1}$ such that 
$\direction_\level=\direction$ for all $\level\in (c_0,c_1).$
Since 
the function $\level
\in (c_0,c_1)
\mapsto a_\level$ is monotone, it 
 remains to construct the function $f.$
We may assume that $\level\mapsto a_\level$ is nonincreasing,
the nondecreasing case being similar.
Extend $a_\lambda$ to $\R\setminus[c_0,c_1]$ setting $a_\level:=+\infty$ 
for $\level<c_0$ if $c_0 \in \R,$  and $a_\level:=-\infty$ for $\level>c_1$
if $c_1 \in \R.$ Then, we 
define 
$$f(\sigma):=\sup\left\{\level:\,\, \sigma<a_\level \right\},\quad \sigma\in\R,
$$
which is 
 nonincreasing. 
Note that $f$ is real valued.
Indeed, if $f(\sigma)=-\infty$ for some $\sigma\in\R,$ 
then $\sigma\ge a_\level$ for all $\level\in\R$ which is impossible 
since $a_\level\to+\infty$ as $\level\to-\infty.$
Similarly, $f(\sigma) <+\infty$ for any $\sigma \in \R.$

Set $v(x):=f(x\cdot\direction).$ By construction, we have 
$\{v > \lambda\} = \{u > \lambda\}$ for a.e. $\lambda \in \R.$
It is easy to check that if $w\in L_\loc^1(\R^n)$ then for a.e. 
$x\in\R^n$ one has 
$$
w(x)= \int_0^{+\infty}\chi_{\{w>\level\}}(x)d\level+\int_{-\infty}^0(1-\chi_{\{w>\level\}}(x))d\level,
$$
hence 
$u=v$ almost everywhere  on $\R^n.$
\end{proof}

\begin{remark}\label{rem:Bernstein}
It seems not easy to generalize Theorem 
\ref{teo:Euc} to noneuclidean $\phisec$ 
(for some $n \in \{3,\dots,7\}$)\footnote{
If $\phisec$ is $\mathcal C^\infty$-uniformly convex norm and $n = 3,$ then
$\{u \geq \level\}$ is smooth \cite[Theorem II.7]{AlScYa:77}. 
}, since our argument
was   based on Theorem \ref{teo:giusti}.
\end{remark}

\begin{remark}\label{rem:opt_Euc}
Assumption (a)  of Theorem \ref{teo:Euc} is optimal in the sense that 
if $n\ge8$ there exist minimizers of $\cG$ on $\R^n$ which 
cannot be written as in \eqref{eq:one_variable}. 
Indeed, let $C\subset\R^8$ be the Simons cone minimizing the Euclidean perimeter
\cite[Theorem A]{BDG:69}. By Proposition 
\ref{prop:cartesian_min_cyl}(d) 
$u=\chi_C \in \mathcal M_{\Phi^o}(\Rn),$ 
however $u$ does not admit the  representation \eqref{eq:one_variable}.
\end{remark}

{}From Theorem \ref{teo:Euc} and Proposition \ref{prop:cartesian_min_cyl} (f) we deduce
the following result. 

\begin{corollary}[\textbf{Composition of linear and monotone functions}]\label{cor:char.min.cylindrical}
Under the assumptions of Theorem \ref{teo:Euc}, $u$ is a minimizer of 
$\mathcal G_{\Phi^o}$ in $\Rn$ if and only if there exists $\direction \in \mathbb S^{n-1}$ 
and a monotone function $f: \R \to \R$ such that $u(x) = f(x\cdot \direction)$ 
for any $x \in \Rn.$
\end{corollary}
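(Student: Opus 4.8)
The plan is to establish the two implications separately, each of which is an immediate consequence of a result already proved earlier in the excerpt, so that the corollary is essentially the conjunction of two statements. For the forward implication---that any minimizer $u$ of $\cG$ in $\Rn$ must have the form $u(x)=f(x\cdot\direction)$ with $\direction\in\S^{n-1}$ and $f$ monotone---I would simply invoke Theorem \ref{teo:Euc}. Its hypotheses (either $1\le n\le 7$ with $\phisec$ Euclidean, or $n=2$ with $\phisec^o$ strictly convex) are precisely the assumptions inherited by the corollary, and its conclusion \eqref{eq:one_variable} is exactly the representation sought. Hence no further work is needed in this direction.

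For the converse implication, the plan is to apply Proposition \ref{prop:cartesian_min_cyl}(f): given $\direction\in\S^{n-1}$ and a monotone function $f:\R\to\R$, the function $u(x)=f(x\cdot\direction)$ belongs to $\cM_{\Phi^o}(\Rn)$, that is, it is a minimizer of $\cG$ in $\Rn$. This is verbatim the desired conclusion, so again the argument reduces to a single citation. I would note in passing that the minimality here ultimately rests on Proposition \ref{prop:cartesian_min_cyl}(e) and the calibration for half-spaces in Example \ref{ex:half-spaces}, but those mechanics are already in place.

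Since each direction is supplied directly by an earlier result, there is no genuine obstacle to surmount; the substantive content lives entirely in the proof of Theorem \ref{teo:Euc} (which exploits the classification of minimizers of $P_\phisec$ through Theorem \ref{teo:giusti} and \cite{NoPa:05}) and in Proposition \ref{prop:cartesian_min_cyl}(f). The only minor bookkeeping worth recording is the compatibility of the two normalizations of the direction: Proposition \ref{prop:cartesian_min_cyl}(f) permits an arbitrary $\direction\in\R^n$, whereas the corollary restricts $\direction$ to the unit sphere. This is harmless, since any nonzero $\direction$ can be rescaled to $\direction/|\direction|$ by absorbing the factor $|\direction|$ into $f$ (which remains monotone), while $\direction=0$ produces a constant $u$, a case already handled by the degenerate alternative $c_0=c_1$ appearing at the start of the proof of Theorem \ref{teo:Euc}.
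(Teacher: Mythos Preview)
Your proposal is correct and mirrors the paper's own argument: the forward implication is exactly Theorem \ref{teo:Euc}, and the converse is Proposition \ref{prop:cartesian_min_cyl}(f). Your remark about normalizing $\direction$ is a harmless refinement the paper leaves implicit.
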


\section{Lipschitz regularity of cartesian minimizers for cylindrical norms}\label{sec: LipRegCarMin}
We recall from \cite[Theorem 3.12]{NoPa:02} that if $n=2$ and if
$\partial B_\phisec$ either does not contain 
segments, or it is locally a graph in a neighborhood of
its segments, then the graph of a minimizer of $\mathcal G_{\Phi^o}$
in $\R^2$ is locally Lipschitz. 
On the other hand, an example in \cite[Sect. 4]{NoPa:02} shows
that such a regularity result
cannot be expected for a general anisotropy.
More precisely, 
for  $\Phi^o$ cylindrical as in \eqref{eq:phi_dual_conical} 
with $\phisec^o(\hsecdual)=|\xi_1^*|+|\xi_2^*|,$
that example exhibits  a function $u\in\mathcal M_{\Phi^o}(\R^2)$
such that the set of points where the boundary of $\sg(u)$  is not locally the graph
of a Lipschitz function has positive $\mathcal H^2$-measure.  
We look for sufficient conditions on $\phisec$ which exclude such pathological
example.

Let us start with a regularity
property of cartesian minimizers of $\mathcal G_{\Phi^o}$ for {\it cylindrical norms over the 
Euclidean norm}, namely for 
$$
\Phi(\widehat \xi, \xi_{n+1}) 
 = \max(\vert \widehat \xi\vert, \vert \xi_{n+1}\vert),
$$
which is exactly the case of  the total variation functional.

We need the following regularity result, a special case of 
\cite[Theorem 1]{Ta:82}.

\begin{theorem}\label{teo:tama}
Let $\{\widehat E_h\}$ be a sequence of minimizers of the
Euclidean perimeter in 
$\widehat \Omega$ locally
converging to a set $\widehat E$ in $\widehat \Omega,$ and let
$x_h \in \partial \widehat E_h$ be such that $\displaystyle \lim_{h \to +\infty}
x_h =  x \in 
\partial^* \widehat E.$ Then there exists $\overline h\in \N$
such that  $x_h\in \p^* \widehat E_h$ for any $h \in \N,$ $h \geq \overline h,$
and   $\displaystyle \lim_{h \to +\infty} \nu_{\widehat E_h}(x_h) = 
\nu_{\widehat E}(x).$
\end{theorem}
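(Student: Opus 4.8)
The plan is to deduce the statement from the classical $\varepsilon$--regularity theory for Euclidean perimeter minimizers, combined with the stability of minimality and of the excess under local convergence. First I would record that the $L^1_\loc$ limit $\widehat E$ of the minimizers $\widehat E_h$ is itself a minimizer of the Euclidean perimeter in $\widehat\Omega$: this follows from lower semicontinuity of the perimeter together with a standard comparison argument, the uniform density estimates for minimizers \cite{Gi:84, Maggi} ruling out any loss of mass in the limit. Since $x\in\p^*\widehat E$, De Giorgi's structure theorem gives that the spherical excess
$$
\mathbf{e}(\widehat E, x, \rho) := \min_{\nu\in\S^{n-1}} \frac{1}{\rho^{n-1}} \int_{\p^*\widehat E\cap B_\rho(x)} \frac{|\nu_{\widehat E}-\nu|^2}{2}\, d\cH^{n-1}
$$
tends to $0$ as $\rho\to 0^+$. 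I would then fix once and for all a radius $\rho>0$, with $\overline{B_{2\rho}(x)}\strictlyincluded\widehat\Omega$, so small that $\mathbf{e}(\widehat E, x, \rho)<\varepsilon_0/2$, where $\varepsilon_0=\varepsilon_0(n)$ is the threshold in the $\varepsilon$--regularity theorem for perimeter minimizers.

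The second, and technically central, step is to transfer the smallness of the excess from $\widehat E$ to the sets $\widehat E_h$ along the points $x_h\to x$. Here I would use that $\widehat E_h\to\widehat E$ in $L^1_\loc$ together with minimality to obtain convergence of the perimeters as measures, $P(\widehat E_h,\cdot)\rightharpoonup P(\widehat E,\cdot)$, with no loss of mass on compact subsets of $\widehat\Omega$; the uniform density estimates again guarantee that mass neither concentrates nor disappears, in particular on the spheres $\p B_\rho(x)$ for a.e.\ $\rho$. Combined with the weak-$*$ convergence $D\chi_{\widehat E_h}\rightharpoonup D\chi_{\widehat E}$ and with $x_h\to x$, this yields $\mathbf{e}(\widehat E_h, x_h, \rho)\to \mathbf{e}(\widehat E, x, \rho)$, so that $\mathbf{e}(\widehat E_h, x_h, \rho)<\varepsilon_0$ for all $h$ large.

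Finally I would invoke the $\varepsilon$--regularity theorem itself: for each such $h$, the smallness $\mathbf{e}(\widehat E_h, x_h, \rho)<\varepsilon_0$ forces $\p\widehat E_h\cap B_{\rho/2}(x_h)$ to be a $C^{1,\alpha}$ graph, with $C^{1,\alpha}$ bounds depending only on $n$, $\alpha$ and $\rho$, hence uniform in $h$. In particular every point of this portion of the boundary, $x_h$ included, belongs to the reduced boundary $\p^*\widehat E_h$, which proves the first assertion, with $\overline h$ the index from which the smallness holds. The uniform $C^{1,\alpha}$ estimates make the normals $\nu_{\widehat E_h}$ equi--H\"older--continuous near $x_h$, so after passing to a subsequence the graphs converge in $C^1$; the $L^1_\loc$ convergence $\widehat E_h\to\widehat E$ identifies the limit graph with $\p\widehat E$ near $x$, whose normal at $x$ is $\nu_{\widehat E}(x)$. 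Since the limit is independent of the subsequence, $\nu_{\widehat E_h}(x_h)\to\nu_{\widehat E}(x)$, as claimed.

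The main obstacle I anticipate is the second step, namely the continuity of the excess under the convergence of minimizers. The delicate point is the absence of loss of mass, i.e.\ $P(\widehat E_h, B_\rho(x))\to P(\widehat E, B_\rho(x))$, which fails for arbitrary sequences of finite--perimeter sets and genuinely exploits minimality through the uniform density estimates; once this is secured, the convergence of the optimal direction in the excess, and hence of $\mathbf{e}(\widehat E_h,x_h,\rho)$, is routine.
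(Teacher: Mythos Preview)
The paper does not prove this theorem at all: it is stated as ``a special case of \cite[Theorem 1]{Ta:82}'' and simply quoted from the literature, so there is no argument in the paper to compare your proposal against. Your outline via $\varepsilon$--regularity (smallness of the excess at $x\in\p^*\widehat E$, stability of the excess under local convergence of minimizers thanks to convergence of the perimeter measures, and uniform $C^{1,\alpha}$ bounds yielding $C^1$ convergence of the boundary graphs) is the standard route to this kind of statement and is essentially how the cited result is obtained; in particular your identification of the delicate point---no loss of perimeter in the limit, secured by the density estimates for minimizers---is accurate.
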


\begin{theorem}[\textbf{Local Lipschitz regularity}]\label{teo: reg_in_all_dim}
Suppose that 
 $u\in BV_{{\rm loc}}(\opensetRn)$ is a minimizer of the total variation functional 
$$
TV(v,\opensetRn) := \int_{\opensetRn}
\vert Dv\vert, \qquad v \in BV_{\rm loc}(\opensetRn).
$$
Then there exists a closed set  $\Sigma(u) \subseteq \partial \sg(u)$  
of Hausdorff dimension at most $n-7$, 
with $\Sigma(u) = \emptyset$ if $n \leq 7,$ 
such that $\partial \sg(u) \setminus \Sigma(u)$ 
is locally Lipschitz.
\end{theorem}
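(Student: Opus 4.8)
The plan is to reduce the regularity of the $n$-dimensional hypersurface $\partial\sg(u)\subset\R^{n+1}$ to the (classical) regularity of the superlevel sets of $u$, each of which is a Euclidean perimeter minimizer in $\Rn$. Since $\Phi(\widehat\xi,\xi_{n+1})=\max(|\widehat\xi|,|\xi_{n+1}|)$ is cylindrical over the Euclidean norm $\phisec$, minimizing $TV$ coincides with minimizing $\mathcal G_{\Phi^o}$ up to an additive constant (cf. Remark \ref{rem:mult_scalar}), so $u\in\mathcal M_{\Phi^o}(\opensetRn)$. By Proposition \ref{prop:cartesian_min_cyl}(c), for every $\lambda\in\R$ the set $E_\lambda:=\{u>\lambda\}$ is a minimizer of the Euclidean perimeter in $\opensetRn$; by the classical interior regularity theory for such minimizers (De Giorgi--Federer--Simons, see \cite{Gi:84} and Theorem \ref{teo:giusti}), $\partial^* E_\lambda$ is smooth and the singular set $\Sigma_\lambda:=\partial E_\lambda\setminus\partial^* E_\lambda$ is closed with $\dim_{\cH}\Sigma_\lambda\le n-8$, and $\Sigma_\lambda=\emptyset$ when $n\le 7$. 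The structural fact I would record first is that the horizontal slice of $\partial\sg(u)$ at height $\lambda$ coincides, up to the usual density-one normalization, with $\partial E_\lambda$, since $\sg(u)\cap\{t=\lambda\}=E_\lambda\times\{\lambda\}$; moreover, for $\lambda\ge\lambda'$ one has the nesting $E_\lambda\subseteq E_{\lambda'}$.

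I would then define $\Sigma(u)$ to be the complement in $\partial\sg(u)$ of the set $R(u)$ of points near which $\partial\sg(u)$ is a Lipschitz graph; by construction $R(u)$ is relatively open, so $\Sigma(u)$ is closed. The heart of the proof is the inclusion $\Sigma(u)\subseteq\{(x,\lambda)\in\partial\sg(u):\ x\in\Sigma_\lambda\}$, i.e. that every point $p_0=(x_0,t_0)\in\partial\sg(u)$ with $x_0\in\partial^* E_{t_0}$ is a Lipschitz-graph point. Fixing such a $p_0$ and setting $\widehat\nu_0:=\nu_{E_{t_0}}(x_0)$, I would apply Theorem \ref{teo:tama} to sequences of minimizers $E_{\lambda_k}\to E_{t_0}$ with $\lambda_k\to t_0$: this yields a neighborhood of $p_0$ in which, for $\lambda$ close to $t_0$, the boundaries $\partial E_\lambda$ have unit normals as close as wished to $\widehat\nu_0$, and hence are graphs $\{y+g_\lambda(y)\,\widehat\nu_0\}$ over the hyperplane $\widehat\nu_0^\perp$ with a Lipschitz constant uniform in $\lambda$. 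By the nesting, the functions $g_\lambda$ are monotone in $\lambda$, and near $p_0$ we have $\partial\sg(u)=\{(y+g_\lambda(y)\,\widehat\nu_0,\ \lambda)\}$.

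The remaining point, which I expect to be the main obstacle, is to upgrade "uniformly Lipschitz in $y$, merely monotone (not Lipschitz) in $\lambda$" to a genuine Lipschitz graph, precisely because $\partial\sg(u)$ may become vertical (where $|\nabla u|=\infty$) or, dually, where the level sets sweep infinitely fast in $\lambda$. The device I would use is a tilt in the two-plane spanned by $\widehat\nu_0$ and $e_{n+1}$: rotating these coordinates by $45^\circ$ trades the degenerate direction for a bounded one, exactly as a monotone planar curve is always locally a $1$-Lipschitz graph after such a rotation. Using the monotonicity of $g_\lambda$ to keep the new ``vertical'' derivative in $[-1,1]$ and the uniform Lipschitz bound in $y$ to control the remaining directions, one obtains that $\partial\sg(u)$ is a Lipschitz graph near $p_0$ over a suitable hyperplane; in particular, interior points of vertical segments over the jump set of $u$ (where $g_\lambda$ is locally constant in $\lambda$) are regular, and the endpoints of such segments are handled by the same monotone-graph argument. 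This proves the inclusion.

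Finally I would run the dimension count. By the inclusion just proved, the horizontal slice $\Sigma(u)\cap\{t=\lambda\}$ is contained in $\Sigma_\lambda\times\{\lambda\}$, hence has Hausdorff dimension at most $n-8$ for every $\lambda$. A standard slicing inequality for Hausdorff dimension along the projection $(x,t)\mapsto t$ (coarea/Eilenberg type, bounding $\dim$ of a set by the dimension of the base plus the supremal dimension of the fibers) then gives $\dim_{\cH}\Sigma(u)\le(n-8)+1=n-7$. When $n\le 7$ each $\Sigma_\lambda$ is empty, so $\Sigma(u)=\emptyset$ and $\partial\sg(u)$ is everywhere locally a Lipschitz graph, which completes the proof.
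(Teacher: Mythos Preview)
Your proof is correct and follows essentially the same approach as the paper: minimality of the superlevel sets via Proposition~\ref{prop:cartesian_min_cyl}(c), Tamanini's stability result (Theorem~\ref{teo:tama}) to control normals of nearby level sets, and a $45^\circ$ tilt in the plane spanned by $\widehat\nu_0$ and $e_{n+1}$ --- your rotation is precisely the paper's choice of direction $\eta=\tfrac{1}{\sqrt2}(\nu_{\{u>\lambda\}}(x),1)$, and your monotonicity-to-Lipschitz step is the geometric counterpart of the paper's estimate $\nu_{\sg(u)}\cdot\eta\ge c$ combined with \cite[Lemma~3.10]{NoPa:02}. Two small caveats: the paper also treats the cases $x_0\in\mathrm{int}\{u=t_0\}$ and $x_0\in\partial\{u\ge t_0\}$ separately, which your identification of the horizontal slice with $\partial E_\lambda$ overlooks; and the ``standard slicing inequality'' $\dim A\le 1+\sup_\lambda\dim A_\lambda$ you invoke is \emph{not} a general theorem for Hausdorff dimension (the paper, too, asserts $\dim\Sigma(u)\le n-7$ without further justification).
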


\begin{proof}
By Proposition \ref{prop:cartesian_min_cyl}(c)
the sets $\{u >\level\}$ and $\{u\ge \level\}$ are minimizers
of the Euclidean perimeter in $\opensetRn$ for every $\level\in\R.$
Let $\lambda\in\R$ be such 
that $\p\{u>\level\}$ (resp. $\{u\ge \level\}$) is nonempty.  
{}From  classical regularity results (see for 
instance \cite[Theorem 11.8]{Gi:84} and references therein)
it follows that $\p\{u>\lambda\}$ (resp. $\p\{u\ge \level\}$) {\it is of class 
$C^\infty$} out of a closed set $\Sigma_\lambda^>(u)$ (resp. $\Sigma_\lambda^\ge(u)$) 
of Hausdorff dimension at most $n-8.$
Define
$$
\Sigma(u) :=\{(x,\lambda)\in \p\sg(u):\,\,x\in\Sigma_\level^>(u)\text{~or} ~ x\in \Sigma_\level^\ge(u)\},
$$  
so that 
$\Sigma(u)$ has dimension at most $n-7.$ {}From Theorem \ref{teo:tama}
it follows that $\Sigma(u)$ is closed.

Fix
\begin{equation}\label{eq:x_lambda}
(x,\level)\in\p{\sg}(u)\setminus \Sigma(u).
\end{equation}
One of the following  three (not necessarily mutually exclusive)  cases holds:
\begin{itemize}
 \item[a)] $x\in {\rm int}(\{u=\lambda\})$;
 \item[b)] $x\in\p\{u>\level\};$
 \item[c)] $x\in\p\{u\ge\level\}.$
\end{itemize}

In case a) 
$u$ is locally
constant around $x,$ thus, the assertion is immediate.

Assume b). 
We prove that there exists $r_x>0$ such that 
$\p\{u>\mu\}$ is a graph in direction $\nu_{\{u > \lambda\}}(x)$  
for every $\mu\in\R$ such that $\p\{u>\mu\} \cap B_{r_x}(x)\ne\emptyset.$ 
Indeed, otherwise there would exist $\epsilon>0$ and an infinitesimal
sequence $(r_h) \subset 
(0,+\infty),$ and sequences 
$(\mu_h)\subset \R,$ $(x_h)$ with 
$x_h\in \p^*\{u>\mu_h\}\cap B_{r_h}(x)$ and 
\begin{equation}\label{equ12}
|\nu_{\{u>\level\}}(x) - \nu_{\{u>\mu_h\}}(x_h)|\ge\epsilon
\qquad \forall h\in\N. 
\end{equation}
By Corollary \ref{cor:locally finiteness} $u$ is locally bounded, thus 
$(\mu_h)$ is bounded and we can extract a (not relabelled) 
subsequence converging 
to some $\overline{\lambda}\in\R.$
There is no loss of generality in assuming $(\mu_h)$ nondecreasing.
Then $\{u>\mu_h\}\to \{u\geq \overline{\lambda}\}$ 
in $L_\loc^1(\opensetRn)$ as $h \to +\infty.$ 
By \eqref{eq:x_lambda} we have 
$x\in \p^*\{u\geq \overline{\lambda}\},$ hence from Theorem \ref{teo:tama}
it follows
\begin{equation}\label{equ21}
\nu_{\{u>\mu_h\}}(x_h)\to \nu_{\{u\geq \overline{\lambda}\}}(x)\quad {\rm as}~ h\to+\infty.
\end{equation}
Clearly, either $\{u\geq \overline{\lambda}\}\subseteq \{u>\level\}$ or
$\{u\geq \overline{\lambda}\}\supseteq \{u>\level\}.$ Since  
$$x\in \p^*\{u>\level\}\cap \p^*\{u\geq \overline{\lambda}\}$$
and
$\p\{u\geq \overline{\lambda}\}$ and $\p\{u>\level\}$ are smooth around $x,$
necessarily
$$
\nu_{\{u>\level\}}(x)=\nu_{\{u\ge \overline{\lambda}\}}(x).
$$
But then from \eqref{equ12} and \eqref{equ21} we get 
$$
\epsilon \le |\nu_{\{u>\mu_h\}}(x_h) - \nu_{\{u>\level\}}(x)|\to 0 \quad {\rm as}~ h\to+\infty,
$$
a contradiction.

Thus, for every $x\in\p^*\{u>\level\}$ there exist $r_x>0$ and 
$\epsilon\in(0,1)$ such that for any $\mu \in (\level-r_x,\level+r_x)$ and 
$y\in \p^*\{u>\mu\}\cap B_{r_x}(x)$ one has 
$$
\nu_{\{u>\level\}}(x)\cdot \nu_{\{u>\mu\}}(y)\ge \epsilon.
$$

Notice that for any $(y,\mu)\in\p^*{\sg}(u) \setminus \Sigma(u)$ 
one has that 
$$
{\rm either} \quad \nu_{\sg(u)}(y,\mu)=\dfrac{(\nu_{\{u>\mu\}}(x),\sigma)}{\sqrt{1+\sigma^2}} ~
{\rm for ~some}~ \sigma \ge0, \quad
\text{or}\quad \nu_{\sg(u)} (y,\mu)=e_{n+1}.
$$
We want to prove 
that there exist $\rho>0,$ $\eta\in\S^{n}$ and $c\in(0,1)$ such that  
$\cH^{n}$-every $(y,\mu)\in \p\,\sg(u)\cap B_{\rho}(x,\level)$ there holds 
\begin{equation}\label{lipscitz}
\eta\cdot \nu_{\sg(u)}(y,\mu)\ge c,
\end{equation}
so that 
 \cite[Lemma 3.10]{NoPa:02}
implies that $\p\,\sg(u)\cap B_{\rho}(x,\level)$ is a Lipschitz graph 
in the direction $\eta$
with Lipschitz constant $L=\sqrt{1/c^2-1}.$

Set
$$
\rho=r_x,\quad \eta=\frac{1}{\sqrt2}\,(\nu_{\{u>\level\}}(x),1).
$$
Then for any $(y,\mu)\in\p^*\sg(u)\cap B_\rho(x,\level)$ we have 
\begin{equation}\label{yvert}
 \nu_{\sg(u)}(y,\mu)\cdot \eta = \frac{1}{\sqrt2},
\end{equation}
if 
$\nu_{\sg(u)}(y,\mu)=e_{n+1},$
and 
\begin{equation}\label{yonbound}
\nu_{\sg(u)}(y,\mu)\cdot \eta = \frac{\nu_{\{u>\mu\}}(y) \cdot \nu_{\{u>\level\}}(x) +s 
}{\sqrt{2}\sqrt{1+s^2}}\ge \frac{\epsilon+s}{\sqrt{2}\sqrt{1+s^2}}\ge 
\dfrac{\epsilon}{\sqrt2}, 
\end{equation}
if $y\in \p\{u>s\}$ (here we use $\frac{a+s}{\sqrt{1+s^2}}\ge a$ for any 
$a\in(0,1)$ and $s\ge0$). Formulas
\eqref{yvert} and \eqref{yonbound} imply \eqref{lipscitz} with $c=\epsilon/\sqrt2.$

Finally, case c) can be treated as case b).

\end{proof}

\begin{remark}\label{rem:opt_Bernstein}
The assertion of Theorem \ref{teo: reg_in_all_dim} cannot be improved:
if $n\ge 8,$ there exists a minimizer $u$ of $\cG$  
such that 
the points where $\p\sg(u)$ is  not locally Lipschitz
have positive $(n-7)$-dimensional Hausdorff measure. 
For the Simons cone in $\R^8$ (and with the Euclidean norm), 
the graph of $u=\chi_C$ cannot be represented as the graph of a 
Lipschitz function in a neighborhood of the origin.
\end{remark}

Theorem \ref{teo: reg_in_all_dim} can be generalized 
as follows.

\begin{theorem}\label{teo:Al}
Suppose that  $\Phi: \R^{n+1} \to \oic$ is cylindrical
over $\phisec$  with
$$
\phisec^2\in C^3(\R^{n})\quad{\rm is~ uniformly~ convex}.
$$
If $u$ is a minimizer of $\cG$ in $\opensetRn,$
then 
$\p\sg(u) \setminus \Sigma(u)$
is locally Lipschitz,
where $\Sigma(u) \subseteq \partial \sg(u)$ is a closed set
of Hausdorff dimension at most $n-2$ if $n>3,$ and $\Sigma(u) = \emptyset$ if $n=2,3.$
\end{theorem}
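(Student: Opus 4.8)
The plan is to repeat, almost verbatim, the proof of Theorem~\ref{teo: reg_in_all_dim}, replacing everywhere the Euclidean perimeter by $P_\phisec$, the Euclidean regularity theory by its anisotropic counterpart, and Tamanini's stability result (Theorem~\ref{teo:tama}) by its anisotropic analogue. The starting point is unchanged: by Proposition~\ref{prop:cartesian_min_cyl}(c), for every $\level\in\R$ the superlevel sets $\{u>\level\}$ and $\{u\ge\level\}$ are minimizers of $P_\phisec$ in $\opensetRn$, and by Corollary~\ref{cor:locally finiteness} the function $u$ is locally bounded. The whole argument then reduces to understanding the regularity of these anisotropic minimizers and to controlling the convergence of their normals.

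First I would set up the anisotropic regularity input. Since $\phisec^2\in C^3(\Rn)$ is uniformly convex, its dual $\phisec^o$ is a smooth and uniformly convex norm, hence a uniformly elliptic integrand, and the regularity theory \cite{AlScYa:77} (see also \cite{Bom:82}) applies to the $P_\phisec$-minimizers $\{u>\level\}$ and $\{u\ge\level\}$: each of $\p\{u>\level\}$ and $\p\{u\ge\level\}$ is a $C^{1,\alpha}$ embedded hypersurface outside a closed singular set $\Sigma_\level^>(u)$, respectively $\Sigma_\level^\ge(u)$, of Hausdorff dimension at most $n-3$; moreover this singular set is empty when $n\le3$, the case $n=3$ relying on the smoothness statement \cite[Theorem~II.7]{AlScYa:77} already quoted in Remark~\ref{rem:Bernstein}, and the case $n=2$ being immediate. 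Defining, exactly as in the Euclidean case,
$$
\Sigma(u):=\{(x,\level)\in\p\sg(u):\ x\in\Sigma_\level^>(u)\ \text{or}\ x\in\Sigma_\level^\ge(u)\},
$$
the one-parameter union over $\level$ raises the dimension by at most one, so $\Sigma(u)$ has Hausdorff dimension at most $n-2$ when $n>3$, and $\Sigma(u)=\emptyset$ when $n=2,3$, as claimed.

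The key technical point, and the step I expect to be the main obstacle, is to replace Tamanini's Theorem~\ref{teo:tama}, used twice in the proof of Theorem~\ref{teo: reg_in_all_dim} (to show that $\Sigma(u)$ is closed, and in the contradiction argument of case~b), by the following anisotropic analogue: if $\{\widehat E_h\}$ are $P_\phisec$-minimizers converging locally to $\widehat E$ and $x_h\in\p\widehat E_h$ with $x_h\to x\in\p^*\widehat E$, then $x_h\in\p^*\widehat E_h$ for $h$ large and $\nu_{\widehat E_h}(x_h)\to\nu_{\widehat E}(x)$. This should follow from the anisotropic $\varepsilon$-regularity (excess-decay) estimates contained in \cite{AlScYa:77, Bom:82} by the same scheme as in the Euclidean case: at a reduced-boundary point the anisotropic excess is small at some scale, convergence of the $\widehat E_h$ makes the excess of $\widehat E_h$ at $x_h$ small at the same scale, and the $\varepsilon$-regularity theorem then forces $x_h$ to be regular with $C^{1,\alpha}$ estimates yielding the convergence of the normals. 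Verifying that the excess-decay theory is available in the required quantitative, scale-invariant form for the $C^3$ integrand $\phisec^o$ is the heart of the matter.

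With this anisotropic stability at hand, the remainder is literally the proof of Theorem~\ref{teo: reg_in_all_dim}. The closedness of $\Sigma(u)$ follows from the stability result; one then fixes $(x,\level)\in\p\sg(u)\setminus\Sigma(u)$ and distinguishes the three cases $x\in\mathrm{int}(\{u=\level\})$, $x\in\p\{u>\level\}$, $x\in\p\{u\ge\level\}$. In the first case $u$ is locally constant; the other two are symmetric, and the contradiction argument based on the anisotropic stability produces $r_x>0$ and $\varepsilon\in(0,1)$ with $\nu_{\{u>\level\}}(x)\cdot\nu_{\{u>\mu\}}(y)\ge\varepsilon$ for all $y\in\p^*\{u>\mu\}\cap B_{r_x}(x)$ and $\mu$ near $\level$. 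I would stress that from here on only the \emph{Euclidean} unit normal to $\sg(u)$ enters: the alternative $\nu_{\sg(u)}(y,\mu)=(\nu_{\{u>\mu\}}(y),\sigma)/\sqrt{1+\sigma^2}$ for some $\sigma\ge0$, or $\nu_{\sg(u)}(y,\mu)=e_{n+1}$, is a purely geometric property of subgraphs, independent of $\phisec$, so the estimate~\eqref{lipscitz} confining $\nu_{\sg(u)}$ to a cone around $\eta=\tfrac{1}{\sqrt2}(\nu_{\{u>\level\}}(x),1)$ is obtained exactly as before. Then \cite[Lemma~3.10]{NoPa:02} gives that $\p\sg(u)\cap B_\rho(x,\level)$ is a Lipschitz graph, completing the proof.
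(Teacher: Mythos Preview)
Your proposal is correct and follows essentially the same approach as the paper, which simply states that the proof of Theorem~\ref{teo: reg_in_all_dim} goes through verbatim once one replaces \cite{Gi:84} by \cite[Theorem~II.7]{AlScYa:77} for the regularity of the anisotropic level sets, and \cite{Ta:82} by \cite[Theorem~4.5]{Ne:2015} for the stability of normals. The only difference is that where you sketch how the anisotropic Tamanini-type result should follow from the $\varepsilon$-regularity/excess-decay theory of \cite{AlScYa:77,Bom:82}, the paper invokes the ready-made statement in Neumayer~\cite{Ne:2015}; this is a citation choice rather than a genuine divergence in strategy.
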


\begin{proof}
The proof is the same as in Theorem \ref{teo: reg_in_all_dim}, 
using 
\cite[Theorems II.7]{AlScYa:77} in place of \cite{Gi:84},
and \cite[Theorem 4.5]{Ne:2015} in place of \cite{Ta:82}.
\end{proof}

\begin{remark}
In 
\cite{NoPa:02}
it is proven that
if $n=2,$ 
$B_\phisec$ is not a quadrilateral, and 
$u$ is a minimizer of $\cG$ in $\opensetRn,$
then the graph of $u$ is locally Lipschitz around any point of $\p\sg(u).$
\end{remark}

\begin{remark}
Using the regularity result in \cite[Theorem II.8]{AlScYa:77}, under
the assumption that $\phisec$ is uniformly convex, smooth and sufficiently 
close to the Euclidean norm, 
one can improve Theorem \ref{teo:Al} by showing that 
$\Sigma(u)$ has Hausdorff dimension at most $n-5.$
\end{remark}

\appendix \section{}

\subsection{A Fubini-type theorem}

%
\begin{proposition}\label{prop:Miranda_Fubini}
Let $E\in BV_\loc(\opensetRnplusone).$
Then for any $A\in\cK(\opensetRnplusone)$ 
\begin{align}\label{eq:hor_per}
\int_{A\cap \p^*E} \Phi^o(\widehat\nu_E,0)d\cH^n = 
\int_\R dt\int_{A_t\cap \p^*E_t} \Phi^o(\nu_{E_t},0)d\cH^{n-1},
\end{align}
\begin{align}\label{eq:ver_per}
\int_{A\cap \p^*E} \Phi^o(0, (\nu_E)_t)d\cH^n = 
\int_{\R^n} dx\int_{A_x\cap \p^*E_x} \Phi^o(0,1)d\cH^{0}.
\end{align}
where $E_t$ and $E_x$ are defined as \eqref{eq:Sections},  
$\nu_{E_t}$ is a outer unit normal to $\p^*E_t$ and 
$\nu_{E_x}$ is a outer unit normal to $\p^*E_x.$ 
\end{proposition}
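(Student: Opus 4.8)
The plan is to derive both identities from a single principle: disintegrate the perimeter measure $\cH^n\llcorner\p^*E$ over the level sets of the height function $\pi(x,t):=t$, and combine this disintegration with Vol'pert's theorem on sections of sets of finite perimeter. Throughout I would use that $\Phi^o$ is one-homogeneous and even, so that $\Phi^o(0,s)=|s|\,\Phi^o(0,1)$ for $s\in\R$, that $\Phi^o(\widehat{\nu_E},0)$ vanishes where $\widehat{\nu_E}=0$, and that $\Phi^o(\lambda\,\widehat{\nu_E},0)=|\lambda|\,\Phi^o(\widehat{\nu_E},0)$.

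I would first dispatch the vertical identity \eqref{eq:ver_per}, which is the easier one. Writing $D\chi_E=\nu_E\,\cH^n\llcorner\p^*E$, its vertical component has total variation $|D_t\chi_E|=|(\nu_E)_t|\,\cH^n\llcorner\p^*E$; hence, by the evenness and homogeneity of $\Phi^o$,
\[\int_{A\cap\p^*E}\Phi^o(0,(\nu_E)_t)\,d\cH^n=\Phi^o(0,1)\int_{A\cap\p^*E}|(\nu_E)_t|\,d\cH^n=\Phi^o(0,1)\,|D_t\chi_E|(A).\]
The one-dimensional slicing theorem for $BV$ functions \cite{AFP:00, Maggi} represents $|D_t\chi_E|$ as a counting measure along vertical lines: for a.e.\ $x\in\Rn$ the section $E_x$ has finite perimeter in $\R$, and $|D_t\chi_E|(A)=\int_{\Rn}\cH^0(A_x\cap\p^*E_x)\,dx$. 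Since $\Phi^o(0,1)\,\cH^0(A_x\cap\p^*E_x)=\int_{A_x\cap\p^*E_x}\Phi^o(0,1)\,d\cH^0$, identity \eqref{eq:ver_per} follows.

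For the horizontal identity \eqref{eq:hor_per} I would use the coarea formula for the restriction of the Lipschitz map $\pi$ to the countably $\cH^n$-rectifiable set $\p^*E$ \cite{AFP:00, Fed:69}. At $\cH^n$-a.e.\ point the approximate tangent plane to $\p^*E$ is $\nu_E^\perp$, so the coarea factor of $\pi$ equals $|P_{\nu_E^\perp}e_{n+1}|$, the length of the orthogonal projection of $e_{n+1}$ onto $\nu_E^\perp$; since $|\widehat{\nu_E}|^2+((\nu_E)_t)^2=1$, this factor is $\sqrt{1-((\nu_E)_t)^2}=|\widehat{\nu_E}|$. The coarea formula then reads, for every nonnegative Borel $g$,
\[\int_{\p^*E} g\,|\widehat{\nu_E}|\,d\cH^n=\int_\R\Big(\int_{(\p^*E)_t} g(\cdot,t)\,d\cH^{n-1}\Big)\,dt,\qquad (\p^*E)_t:=\{y:(y,t)\in\p^*E\}.\]
I would apply this with $g:=\chi_A\,\Phi^o(\widehat{\nu_E},0)/|\widehat{\nu_E}|$ on $\{\widehat{\nu_E}\neq0\}$ and $g:=0$ otherwise, a Borel function of $\nu_E$ alone, which sidesteps any measurability concern about $\nu_{E_t}$. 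On the left, the degenerate set $\{\widehat{\nu_E}=0\}$ contributes nothing, so the left-hand side equals $\int_{A\cap\p^*E}\Phi^o(\widehat{\nu_E},0)\,d\cH^n$. For the right-hand side I would invoke Vol'pert's theorem \cite{AFP:00}: for $\cL^1$-a.e.\ $t$ the section $E_t$ has locally finite perimeter in $\Rn$, $\p^*E_t$ coincides with $(\p^*E)_t$ up to $\cH^{n-1}$-null sets, and $\widehat{\nu_E}(y,t)/|\widehat{\nu_E}(y,t)|=\nu_{E_t}(y)$ there; by one-homogeneity $g(y,t)=\chi_A(y,t)\,\Phi^o(\nu_{E_t}(y),0)$, so the inner integral equals $\int_{A_t\cap\p^*E_t}\Phi^o(\nu_{E_t},0)\,d\cH^{n-1}$, which yields \eqref{eq:hor_per}.

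The main obstacle is the precise version of Vol'pert's theorem invoked above, namely that slicing commutes with taking the reduced boundary and that the sliced normal is the normalized horizontal normal. This is exactly what renders the purely vertical part $\{\widehat{\nu_E}=0\}\cap\p^*E$ invisible to both sides: on the left the coarea factor $|\widehat{\nu_E}|$ annihilates it, while on the right it appears in no slice $\p^*E_t$. Beyond this, the only nonroutine points are the short linear-algebra check $|P_{\nu_E^\perp}e_{n+1}|=|\widehat{\nu_E}|$ and the $\cH^{n-1}$-a.e.\ identification $\p^*E_t=(\p^*E)_t$; the remaining steps are bookkeeping with the homogeneity of $\Phi^o$.
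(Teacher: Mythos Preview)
Your proof is correct and follows essentially the same route as the paper: for \eqref{eq:ver_per} both use homogeneity of $\Phi^o$ to reduce to $\Phi^o(0,1)\,|D_t\chi_E|(A)$ and then slice along vertical lines, while for \eqref{eq:hor_per} both apply the coarea formula on the rectifiable set $\p^*E$ with the height function (computing the coarea factor as $|\widehat{\nu_E}|$) together with the Vol'pert--type slicing result identifying $(\p^*E)_t$ with $\p^*E_t$ and $\nu_{E_t}$ with $\widehat{\nu_E}/|\widehat{\nu_E}|$. The only cosmetic difference is that the paper cites \cite[Theorem 18.11]{Maggi} and \cite[Theorem 3.3]{MM1964} explicitly, whereas you cite \cite{AFP:00} for the same facts.
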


\begin{proof}
Let us prove \eqref{eq:hor_per}.
Notice that by \cite[Theorem 18.11]{Maggi} for a.e. $t\in\R$ 
$$
\cH^{n-1}(\p^*E_t \Delta (\p^*E)_t) =0, \quad \widehat\nu_E\ne0,\quad 
\nu_{E_t}=\frac{\widehat\nu_E}{|\widehat\nu_E|}.$$
We can use the coarea formula \cite[Theorem 2.93]{AFP:00} with the function 
$f:\R^{n+1}\to\R,$ $f(x,t)=t.$
Then $\nabla f=e_{n+1}$ and its orthogonal projection $\nabla^E f$ on the 
approximate tangent space to $\p^*E$ is 
$\nabla^E f= e_{n+1} - (e_{n+1}\cdot \nu_E)\nu_E.$
Thus,
\begin{align*}
& \int_\R dt\int_{A_t\cap \p^*E_t} \Phi^o(\nu_{E_t},0)d\cH^{n-1} =
\int_\R dt\int_{(A\cap\p^*E)\cap \{f=t\}} \Phi^o(\nu_{E_t},0)d\cH^{n-1} 
\\
=& \int_{A\cap\p^*E} \Phi^o(\nu_{E_t},0)|e_{n+1} - (e_{n+1}\cdot \nu_E)\nu_E|\,d\cH^{n-1} 
= \int_{A\cap\p^*E} \Phi^o(\nu_{E_t},0)\sqrt{1 - |(\nu_E)_t|^2} \,d\cH^{n-1} \\
=& \int_{A\cap\p^*E} \Phi^o(\nu_{E_t},0)|\widehat\nu_E| \,d\cH^{n-1} 
= \int_{A\cap\p^*E} \Phi^o(\widehat\nu_E,0) \,d\cH^{n-1}.
\end{align*}
Now, \eqref{eq:ver_per} follows from \eqref{perii} 
and \cite[Theorem 3.3]{MM1964}:
\begin{align*}
\int_{A\cap \p^*E} \Phi^o(0, (\nu_E)_t)d\cH^n = & 
\Phi^o(0,1) \int_{A\cap \p^*E} |(\nu_E)_t|d\cH^n 
= \Phi^o(0,1) \int_A|D_t\chi_E|\\
=&\Phi^o(0,1)  \int_{\R^n} dx\int_{A_x\cap \p^*E_x} d\cH^{0}. 
\end{align*}

\end{proof}

\begin{remark}\label{rem:miranda} 
Let 
$\Phi:\R^{n+1}\to\oic$ 
be a norm. For notational simplicity set
$\phisec_1:=\Phi_{ \vert_{ \{\xi_{n+1} = 0\} } }^{},$ $\phisec_2:=\Phi_{\vert_{\{\widehat{\xi} = 0\}}}^{}.$
For $f\in BV_{\loc}(\opensetRnplusone)$ and $A\in\cK(\opensetRnplusone)$ we define
\begin{align*}
\int_{A} \phisec_1^o(D_xf)=\sup\Big\{ \int_A f(x,t) &\sum\limits_{i=1}^n
\frac{\partial \eta_i(x,t)}{\partial x_i} \,dxdt:\,
 \eta\in C_c^1(A; B_{\phisec_1})\Big\},\\
\int_{A} \phisec_2^o(D_tf)=\sup\Big\{ \int_A f(x,s) &D_t\eta(x,s) \,dxds:\,
 \eta\in C_c^1(A),\,\,\phisec_2(\eta)\le 1\Big\}.
\end{align*}
With this notation \eqref{eq:hor_per} and \eqref{eq:ver_per} can be rewritten 
respectively as\footnote{Following \cite[Theorem 3.3]{MM1964} one can prove 
a more general statement, namely, 
if $f\in BV_{\rm loc}(A),$ then
\begin{align*}
\int_A \phisec_1^o(D_xf) =\int_{\R} dt
\int_{A_t} \phisec_1^o(D_x (f\big|_{A_t})),\qquad 
\int_A \phisec_2^o(D_tf) =\int_{\R^n} dx
\int_{A_x} \phisec_2^o(D_t (f\big|_{A_x})).
\end{align*}
}
\begin{align*}
\int_{A} \phisec_1^o(D_x\chi_E) = &
\int_\R dt\int_{A_t} \phisec_1^o(D_x\chi_{E_t}),\\
\int_{A} \phisec_2^o(D_t\chi_E) = &
\int_{\R^n} dx\int_{A_x} \phisec_2^o(D_t\chi_{E_x}). 
\end{align*} 
\end{remark}

\subsection{Norms with generalized graph property}\label{subsec:norms_with_generalized_graph_property}

\begin{lemma}\label{lem:AdjointPropertyG}
$\partial B_\Phi$
is a generalized graph in the vertical direction
if and only if $\partial B_{\Phi^o}$
is a generalized graph in the vertical direction.
\end{lemma}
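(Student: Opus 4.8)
The plan is to read the generalized graph property as the single inequality it encodes and to exploit the involution $\Phi^{oo}=\Phi.$ Writing $\phisec:=\Phi_{\vert_{\{\xi_{n+1}=0\}}}^{},$ the condition \eqref{compRadonMeas} for $\partial B_\Phi$ is precisely $\Phi(\hsectangent,\xi_{n+1})\ge\Phi(\hsectangent,0)=\phisec(\hsectangent)$ for all $(\hsectangent,\xi_{n+1})\in\R^{n+1},$ while the analogous condition for $\partial B_{\Phi^o}$ is $\Phi^o(\hsecdual,\xi_{n+1}^*)\ge\Phi^o(\hsecdual,0)$ for all $(\hsecdual,\xi_{n+1}^*).$ Since $\Phi^{oo}=\Phi,$ it suffices to prove a single implication, say that the property for $\Phi$ forces the property for $\Phi^o$; the converse then follows by applying the same argument with $\Phi^o$ in place of $\Phi.$

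First I would record the elementary, \emph{unconditional} bound $\Phi^o(\hsecdual,\xi_{n+1}^*)\ge\phisec^o(\hsecdual),$ valid for every $(\hsecdual,\xi_{n+1}^*)$ with no hypothesis on $\Phi.$ This is obtained by restricting the supremum defining $\Phi^o$ to the horizontal test vectors $(\hsectangent,0)$: since $\Phi(\hsectangent,0)=\phisec(\hsectangent),$ every $\hsectangent$ with $\phisec(\hsectangent)\le1$ yields an admissible competitor, so $\Phi^o(\hsecdual,\xi_{n+1}^*)\ge\sup\{\hsecdual\cdot\hsectangent:\phisec(\hsectangent)\le1\}=\phisec^o(\hsecdual).$ In particular $\Phi^o(\hsecdual,0)\ge\phisec^o(\hsecdual),$ which is just \eqref{restHorPl}.

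The heart of the argument is to show that the generalized graph property for $\Phi$ upgrades this last inequality to an equality, namely $\Phi^o(\hsecdual,0)=\phisec^o(\hsecdual).$ For the reverse inequality I would estimate the supremum defining $\Phi^o(\hsecdual,0)$: for any competitor $(\hsectangent,\xi_{n+1})$ with $\Phi(\hsectangent,\xi_{n+1})\le1,$ the hypothesis \eqref{compRadonMeas} gives $\phisec(\hsectangent)=\Phi(\hsectangent,0)\le\Phi(\hsectangent,\xi_{n+1})\le1,$ whence $\hsecdual\cdot\hsectangent\le\phisec^o(\hsecdual)$ by the duality inequality \eqref{duality}; taking the supremum yields $\Phi^o(\hsecdual,0)\le\phisec^o(\hsecdual).$ Combining the two bounds gives $\Phi^o(\hsecdual,0)=\phisec^o(\hsecdual),$ and then the unconditional bound of the previous step reads $\Phi^o(\hsecdual,\xi_{n+1}^*)\ge\phisec^o(\hsecdual)=\Phi^o(\hsecdual,0),$ which is exactly the generalized graph property for $\Phi^o.$

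I do not expect a genuine obstacle here: the only point requiring care is that the nontrivial inequality must be run on the side of the \emph{constraint}, using \eqref{compRadonMeas} to shrink the admissible set in the definition of $\Phi^o(\hsecdual,0)$ to the horizontal slice, rather than on the test vectors, so that it combines correctly with the free lower bound of the second step. Once the equality $\Phi^o(\hsecdual,0)=\phisec^o(\hsecdual)$ is in hand, closing the equivalence via $\Phi^{oo}=\Phi$ is immediate.
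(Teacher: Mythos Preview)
Your proof is correct and follows essentially the same line as the paper's: both use the generalized graph hypothesis to pass from $\Phi(\hsectangent,\xi_{n+1})\le1$ to $\Phi(\hsectangent,0)\le1$ and then test the horizontal vector $(\hsectangent,0)$ against $(\hsecdual,\xi_{n+1}^*)$, closing the equivalence via $\Phi^{oo}=\Phi$. The only organizational difference is that you route the argument through the auxiliary equality $\Phi^o(\hsecdual,0)=\phisec^o(\hsecdual)$ (which is in fact one direction of the paper's Lemma~\ref{lem:CommutRestAdj}), whereas the paper works directly with a single extremizer for $\Phi^o(\hsecdual,0)$ and never names $\phisec^o$.
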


\begin{proof}
Suppose that  $\partial B_\Phi$ 
is a generalized graph in the vertical direction. 
Let $\xi^*=(\hsecdual,0)\in\R^{n+1},$ and take
$\xi = (\widehat \xi, \xi_{n+1})\in\R^{n+1}$ such that $\Phi(\xi)=1$ and
\begin{equation}\label{eq:hat_xi_dot_hat_xi_star}
\hsectangent\cdot\hsecdual =
\Phi^o(\hsecdual,0) = (\hsectangent,\xi_{n+1})\cdot(\hsecdual,0).
\end{equation}
Since $\partial B_\Phi$ is a generalized graph in the vertical direction, 
we have $\Phi(\hsectangent,0)\leq \Phi(\xi)=1.$ Thus,
 by \eqref{duality} and \eqref{eq:hat_xi_dot_hat_xi_star}
we get $\Phi^o(\hsecdual,0)
\le \Phi^o(\hsecdual,\xi_{n+1}^*),$ 
hence $\partial B_{\Phi^o}$ 
is a generalized graph in the vertical direction. The converse conclusion
follows then from the equality $\Phi^{oo} = \Phi.$
\end{proof}

\begin{lemma}\label{lem:CommutRestAdj}
Equality holds in \eqref{restHorPl} if and only if $\partial B_{\Phi}$
is a generalized graph in the vertical direction.
\end{lemma}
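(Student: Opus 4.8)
The plan is to rephrase both sides of \eqref{restHorPl} as support functions of two convex bodies extracted from the Wulff shape $B_\Phi$, and then to read off equality geometrically. First I note that, with $\phisec=\Phi_{\vert_{\{\xi_{n+1}=0\}}}$ and $\phi=\big(\Phi_{\vert_{\{\xi_{n+1}^*=0\}}}^o\big)^o$, equality in \eqref{restHorPl} is precisely the identity $\phisec^o=\phi^o$, where $\phi^o(\hsecdual)=\Phi^o(\hsecdual,0)$. On the one hand, $\phisec^o(\hsecdual)=\sup\{\hsectangent\cdot\hsecdual:\Phi(\hsectangent,0)\le 1\}$ is the support function of the horizontal slice $S:=B_\Phi\cap\{\xi_{n+1}=0\}$. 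On the other hand, the crucial computation is that $\phi^o(\hsecdual)=\Phi^o(\hsecdual,0)=\sup\{(\hsectangent,\xi_{n+1})\cdot(\hsecdual,0):(\hsectangent,\xi_{n+1})\in B_\Phi\}=\sup\{\hsectangent\cdot\hsecdual:(\hsectangent,\xi_{n+1})\in B_\Phi\}$, which depends on its argument only through the horizontal component and is therefore the support function of the horizontal projection $\Pi$ of $B_\Phi$ onto $\{\xi_{n+1}=0\}$.

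With this dictionary in hand, the lemma reduces to two elementary equivalences. Since $B_\Phi$ is compact (by \eqref{comparison}), convex and symmetric, so are both $S$ and $\Pi$; hence equality of their support functions $\phisec^o$ and $\phi^o$ is equivalent to $S=\Pi$. Now $S\subseteq\Pi$ always holds (which reproves \eqref{restHorPl}), so $S=\Pi$ amounts to the reverse inclusion $\Pi\subseteq S$, i.e. $(\hsectangent,\xi_{n+1})\in B_\Phi\Rightarrow(\hsectangent,0)\in B_\Phi$, that is $\Phi(\hsectangent,\xi_{n+1})\le 1\Rightarrow\Phi(\hsectangent,0)\le 1$. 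By one-homogeneity this last implication is exactly the generalized-graph inequality \eqref{compRadonMeas}. Chaining the equivalences, equality in \eqref{restHorPl} $\iff$ $\phisec^o=\phi^o$ $\iff$ $S=\Pi$ $\iff$ \eqref{compRadonMeas}, proves the lemma.

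The only nonelementary ingredient is the step ``equal support functions imply equal bodies'', i.e. the fact that a closed convex symmetric set is recovered from its support function; this is where I expect the real content to sit, and I would supply it by a separation argument if a self-contained proof is preferred. Indeed, for the direction ``\eqref{compRadonMeas} $\Rightarrow$ equality'' no separation is needed: if $(\hsectangent,\xi_{n+1})\in B_\Phi$ then $\phisec(\hsectangent)=\Phi(\hsectangent,0)\le\Phi(\hsectangent,\xi_{n+1})\le 1$ by \eqref{compRadonMeas}, so $\hsectangent\cdot\hsecdual\le\phisec^o(\hsecdual)$ and taking the supremum gives $\phi^o(\hsecdual)\le\phisec^o(\hsecdual)$, hence equality with \eqref{restHorPl}. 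The converse is the \emph{delicate} one: assuming $\phisec^o=\phi^o$ but $\Phi(\hsectangent_0,0)>\Phi(\hsectangent_0,\xi_{n+1}^0)$ for some point, after normalizing so that $(\hsectangent_0,\xi_{n+1}^0)\in B_\Phi$ while $\hsectangent_0\notin S$, I would separate $\hsectangent_0$ from the closed convex set $S$ by some $\hsecdual$ with $\hsectangent_0\cdot\hsecdual>\phisec^o(\hsecdual)=\phi^o(\hsecdual)\ge\hsectangent_0\cdot\hsecdual$, a contradiction. Lemma \ref{lem:AdjointPropertyG} could be used to phrase the argument symmetrically in $\Phi$ and $\Phi^o$, but it is not needed here.
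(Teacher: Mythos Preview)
Your proof is correct. The geometric reformulation---identifying $\phisec^o$ as the support function of the horizontal \emph{slice} $S=B_\Phi\cap\{\xi_{n+1}=0\}$ and $\phi^o=\Phi^o(\cdot,0)$ as the support function of the horizontal \emph{projection} $\Pi$ of $B_\Phi$---is a nice conceptual packaging, and the chain of equivalences $\phisec^o=\phi^o\iff S=\Pi\iff\eqref{compRadonMeas}$ is sound.

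The paper's argument is analytically the same but phrased directly via the duality inequality \eqref{duality} rather than support-function language. For the direction ``\eqref{compRadonMeas} $\Rightarrow$ equality'' the two proofs coincide. For the converse, where you invoke Hahn--Banach separation of $\hsectangent_0\notin S$ from $S$, the paper instead selects, for a given $\hsectangent$, a vector $\hsecdual$ with $\phisec^o(\hsecdual)=1$ attaining $\phisec(\hsectangent)=\hsectangent\cdot\hsecdual$; then the assumed equality gives $\Phi^o(\hsecdual,0)=1$ as well, and \eqref{duality} yields $\Phi(\hsectangent,0)=\hsectangent\cdot\hsecdual=(\hsectangent,\xi_{n+1})\cdot(\hsecdual,0)\le\Phi(\hsectangent,\xi_{n+1})$. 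This is the same separation in disguise (the existence of such $\hsecdual$ is a supporting-hyperplane statement), but it avoids arguing by contradiction and makes the role of the equality assumption transparent at the single step $\phisec^o(\hsecdual)=\Phi^o(\hsecdual,0)$. Your slice-versus-projection picture, on the other hand, makes the geometric content of the lemma immediate and explains \eqref{restHorPl} itself as the trivial inclusion $S\subseteq\Pi$.
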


\begin{proof}
Set $\phisec := \Phi_{ \vert_{ \{ \xi_{n+1} \} }}^{}.$
Assume that $\partial B_{\Phi}$
is a generalized graph in the vertical direction.
Let  $(\hsecdual,0)\in\R^{n+1}$ and take 
$\xi=(\hsectangent,\xi_{n+1})\in\R^{n+1}$ such that
$\Phi(\xi)=1$ and \eqref{eq:hat_xi_dot_hat_xi_star} holds.
By our assumption on $\p B_\Phi$ it follows that 
$\phisec(\hsectangent) = \Phi(\hsectangent,0)\le  \Phi(\hsectangent,\xi_{n+1})=1,$ 
hence by \eqref{duality}
$$
\Phi^o(\hsecdual,0)\le  \phisec^o(\hsecdual)\phisec(\hsectangent)\le \phisec^o(\hsecdual).
$$
This and \eqref{restHorPl} imply $\phisec^o(\hsecdual) = \Phi^o(\hsecdual,0),$ {\it i.e.}
$
\left(\Phi_{ \vert_{ \{\xi_{n+1} =0\} } }^{} \right)^o =
\Phi_{\vert_{\{\xi_{n+1}^* =0\}}}^o.
$

Now assume that equality in \eqref{restHorPl} holds. 
Take any $\xi=(\hsectangent,\xi_{n+1})\in\R^{n+1}$
and select $\hsecdual\in\R^n$ such that
$\phisec^o(\hsecdual) = \Phi^o(\hsecdual,0)=1$ and 
$\phisec(\hsectangent) = \hsectangent\cdot\hsecdual.$ 
Then by \eqref{duality}
\begin{align*}
\Phi(\hsectangent,0) = &\phisec(\hsectangent) = \hsectangent\cdot\hsecdual
=(\hsectangent,\xi_{n+1})\cdot
(\widehat{\xi^*},0)\le \Phi(\hsectangent,\xi_{n+1}).
\end{align*}
\end{proof}

\subsection{Partially monotone norms}\label{subsec:partially_monotone_norms}

\begin{proposition}[\bf Characterization of partially monotone norms]\label{prop:part_mon}
The norm $\Phi:\R^{n+1}\to\oic$ is partially monotone if and only if 
there exists a positively one-homogeneous convex function 
$\omega:[0,+\infty)\times[0,+\infty)\to\oic$ satisfying 
\begin{equation}\label{Omega_monotone}
\omega(1,0),\omega(0,1)>0,\,\,\omega(s_1,s_2)\le 
\omega(t_1,t_2),\quad 0\le s_i\le t_i,\,\,i=1,2,
\end{equation}
such that 
\begin{equation}\label{ppp}
\Phi(\widehat{\xi},\xi_{n+1}) = \omega(\phisec(\widehat{\xi}), |\xi_{n+1}|),
\end{equation}
where $\phisec=\Phi_{\vert_{ \{\xi_{n+1} = 0\} }}^{}.$ 
\end{proposition}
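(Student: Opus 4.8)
The plan is to prove the two implications separately, the backward one being essentially immediate and the forward one requiring a well-definedness argument. For the backward direction I would assume the representation \eqref{ppp} with $\omega$ as in \eqref{Omega_monotone} and simply unwind the definitions: since $\phisec=\Phi_{\vert_{\{\xi_{n+1}=0\}}}$, one has $\Phi(\hsectangent,0)=\omega(\phisec(\hsectangent),0)=\phisec(\hsectangent)$ and, by one-homogeneity of $\omega$, $\Phi(0,\xi_{n+1})=\omega(0,|\xi_{n+1}|)=|\xi_{n+1}|\,\omega(0,1)$ with $\omega(0,1)>0$. Hence the two hypotheses in \eqref{comp_xi_eta} translate into $\phisec(\hsectangent)\le\phisec(\widehat\eta)$ and $|\xi_{n+1}|\le|\eta_{n+1}|$, and the monotonicity in \eqref{Omega_monotone} gives $\Phi(\xi)=\omega(\phisec(\hsectangent),|\xi_{n+1}|)\le\omega(\phisec(\widehat\eta),|\eta_{n+1}|)=\Phi(\eta)$, i.e. $\Phi$ is partially monotone.

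For the forward direction the guiding idea is that partial monotonicity forces $\Phi(\xi)$ to depend on $\xi$ only through the pair $(\phisec(\hsectangent),|\xi_{n+1}|)$. First I would record that the restriction of $\Phi$ to the vertical axis is a one-dimensional norm, so $\Phi(0,s)=|s|\,\Phi(0,1)$ with $\Phi(0,1)>0$ by \eqref{comparison}; this makes the condition $|\xi_{n+1}|=|\eta_{n+1}|$ equivalent to $\Phi(0,\xi_{n+1})=\Phi(0,\eta_{n+1})$, which is what lets me invoke Definition \ref{def:partial_monotone_norm} symmetrically. I would then fix once and for all a vector $\widehat e\in\R^n$ with $\phisec(\widehat e)=1$ (it exists since $n\ge1$) and set, for $(a,b)\in\oic\times\oic$,
$$
\omega(a,b):=\Phi(a\,\widehat e,\,b).
$$
The point to verify is that this $\omega$ represents $\Phi$ correctly. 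Given $(\hsectangent,\xi_{n+1})$, put $a=\phisec(\hsectangent)$ and $b=|\xi_{n+1}|$; then $\Phi(\hsectangent,0)=\phisec(\hsectangent)=a=\phisec(a\widehat e)=\Phi(a\widehat e,0)$ and $\Phi(0,\xi_{n+1})=b\,\Phi(0,1)=\Phi(0,b)$, so applying \eqref{comp_xi_eta} in both directions yields $\Phi(\hsectangent,\xi_{n+1})=\Phi(a\widehat e,b)=\omega(a,b)$, which is exactly \eqref{ppp}.

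It then remains to check that $\omega$ enjoys the properties of \eqref{Omega_monotone}. Positive one-homogeneity and positivity I expect to be immediate: $\omega(\lambda a,\lambda b)=\Phi(\lambda a\widehat e,\lambda b)=\lambda\,\omega(a,b)$ for $\lambda>0$ by one-homogeneity of $\Phi$, while $\omega(1,0)=\Phi(\widehat e,0)=\phisec(\widehat e)=1$ and $\omega(0,1)=\Phi(0,1)>0$. Monotonicity follows by the same mechanism as in the backward direction, comparing the representatives $(s_1\widehat e,s_2)$ and $(t_1\widehat e,t_2)$ through \eqref{comp_xi_eta}. The key observation for convexity is that the map $T(a,b)=(a\widehat e,b)$ is the restriction to the convex cone $\oic\times\oic$ of the linear map $\R^2\to\R^{n+1}$, and that $\omega=\Phi\circ T$; since $\Phi$ is convex, $\omega$ is convex as the composition of a convex function with a linear map on a convex domain. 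I expect the main obstacle to be the well-definedness/representation step rather than the verification of these individual properties: the whole argument hinges on being able to reduce the equality $\Phi(\hsectangent,\xi_{n+1})=\Phi(a\widehat e,b)$ to the two scalar comparisons that trigger partial monotonicity symmetrically, and in particular on the identity $\Phi(0,s)=|s|\,\Phi(0,1)$ that makes the vertical comparison reversible.
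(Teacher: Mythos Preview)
Your argument is correct and follows essentially the same route as the paper: fix a unit vector $\widehat e$ for $\phisec$, define $\omega(a,b):=\Phi(a\widehat e,b)$, use partial monotonicity symmetrically (via the identity $\Phi(0,s)=|s|\Phi(0,1)$) to obtain the representation \eqref{ppp}, and read off the properties of $\omega$ from those of $\Phi$. The paper is more terse and leaves the ``if'' direction and the convexity of $\omega$ to the reader, but your expanded justifications (in particular $\omega=\Phi\circ T$ with $T$ linear) are exactly the intended ones.
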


\begin{proof}
Concerning the  ``if'' part,
one checks that the function $\Phi$ defined as \eqref{ppp} is 
a partially monotone norm on $\R^{n+1}.$ 
Now, let us prove the ``only if'' part. 
Choose any $\widehat\eta\in\R^n$  with $\phisec(\widehat\eta)=1$ and define the function
$\omega:\oic\times\oic\to\oic$ as 
$\omega(s,t) := \Phi(s\widehat\eta,t)$ for $s,t\ge0.$
Since $\Phi$ is convex and positively one-homogeneous, so is $\omega.$ Moreover, 
the relations  $\Phi(\widehat\eta,0)=1,$ $\Phi(0,1)>0$ and partial monotonicity of 
$\Phi$ imply that $\omega$ satisfies \eqref{Omega_monotone}.
Now it remains to prove \eqref{ppp}.
Comparing 
$\xi=(\widehat{\xi},\xi_{n+1})\in\R^{n+1}$ with
$\eta=(\phisec(\widehat{\xi})\widehat\eta,|\xi_{n+1}|)\in\R^{n+1}$ in \eqref{comp_xi_eta}
and using the relation $\Phi(0,\xi_{n+1})=\Phi(0,|\xi_{n+1}|)$ 
and partial monotonicity, we find
$$
\Phi(\widehat\xi,\xi_{n+1}) = \Phi(\phisec(\widehat{\xi})\widehat\eta,|\xi_{n+1}|) = 
\omega(\phisec(\widehat{\xi}),|\xi_{n+1}|).
$$
\end{proof}

Notice that for $\Phi$ as in \eqref{ppp} we have 
\begin{equation}\label{pmnormdual}
\Phi^o(\widehat{\xi^*},\xi_{n+1}^*)=\omega^o(\phisec^o(\widehat{\xi^*}),|\xi_{n+1}^*|),
\end{equation}
where
$\omega^o:[0,+\infty)\times[0,+\infty)\to\oic$ is defined as
\begin{equation}\label{eq:dual_omega}
\omega^o(s_1^*,s_2^*)=\sup\{s_1s_1^*+s_2s_2^*:\,\,s_1,s_2\in\oic,\,\,\omega(s_1,s_2)\le1\},\,\,
s_1^*,s_2^*\in[0,+\infty). 
\end{equation}
Indeed, 
take any $\xi^*=(\widehat{\xi^*},\xi_{n+1}^*)\in(\R^{n+1})^*.$ 
Let $\xi=(\widehat{\xi},\xi_{n+1})\in\R^{n+1}$ be such that 
$\Phi(\xi)=\omega(\phisec(\widehat{\xi}),|\xi_{n+1}|)\le 1$ and
$\xi\cdot \xi^* = \Phi^o(\xi^*).$ Then using \eqref{duality} twice we get
\begin{equation}\label{less}
\begin{aligned}
\Phi^o(\xi^*)= & \widehat{\xi}\cdot\widehat{\xi^*}+ \xi_{n+1}\cdot\xi_{n+1}^*
\le \phisec(\widehat{\xi})\phisec^o(\widehat{\xi^*})+ |\xi_{n+1}|\cdot|\xi_{n+1}^*|\\
\le & \omega(\phisec(\widehat{\xi}),|\xi_{n+1}|) \omega^o(\phisec^o(\widehat{\xi^*}),|\xi_{n+1}^*|)\le 
\omega^o(\phisec^o(\widehat{\xi^*}),|\xi_{n+1}^*|).
\end{aligned} 
\end{equation}

On the other hand, for any $\xi^*\in (\R^{n+1})^*$ there exist $\widehat{\xi}\in\R^{n}$  such that 
$\phisec(\widehat{\xi}) \le1$ and 
$$\widehat{\xi} \cdot\widehat{\xi^*} =\phisec^o(\widehat{\xi^*}).$$
Moreover, by definition of $\omega^o$ one can find $(s_1,s_2)\in\oic\times\oic$ such that 
$\omega(s_1,s_2)\le1$
and  $\omega^o( \phisec^o(\widehat{\xi^*}),|\xi_{n+1}^*| ) =
s_1 \phisec^o(\widehat{\xi^*})+s_2|\xi_{n+1}^*|.$
Using \eqref{Omega_monotone} for $(s_1\phisec(\widehat{\xi}),s_2\sign(\xi_{n+1}^*))$
and $(s_1,s_2)$ one has 
$$\Phi(s_1\widehat{\xi}, s_2\sign(\xi_{n+1}^*) ) =  
\omega(s_1\phisec(\widehat{\xi}),s_2|\sign(\xi_{n+1})| ) \le 
\omega(s_1,s_2)\le1.$$
Thus,
\begin{equation}\label{more}
\begin{aligned}
\omega^o(\phisec^o(\widehat{\xi^*}),|\xi_{n+1}^*|)=&  s_1 \phisec^o(\widehat{\xi^*})+s_2|\xi_{n+1}^*| =
(s_1\widehat{\xi}) \cdot \widehat{\xi^*}+ (s_2\sign(\xi_{n+1}^*))\cdot \xi_{n+1}^*\\
\le& \Phi(s_1\widehat{\xi},s_2\sign(\xi_{n+1}^*) ) \Phi^o(\widehat{\xi^*},\xi_{n+1}^*)\le
\Phi^o(\widehat{\xi^*},\xi_{n+1}^*).
\end{aligned}
\end{equation}
From \eqref{less}-\eqref{more} we get \eqref{pmnormdual}.

\begin{remark}
The norm $\Phi:\R^{n+1}\to\oic$ 
is partially monotone if  and only if $\Phi^o$ is partially monotone.
\end{remark}

We give the proof of the following lemma which is used in 
the proof Theorem \ref{teo:Subgraphs}.

\begin{lemma}\label{lem:PartmonNorm}
Suppose that $\Phi:\R^{n+1}\to\oic$ is a partially monotone norm, 
$E,F\in BV_\loc(\opensetRn\times\R)$ such that for every 
$\bopensetRn\in\cK(\opensetRn)$ 
\begin{equation}\label{cond_to_meas}
\int_{\bopensetRn\times\R} \Phi^o(D_x\chi_E,0)\le 
\int_{\bopensetRn\times\R} \Phi^o(D_x\chi_F,0),\quad
\int_{\bopensetRn\times\R} \Phi^o(0,D_t\chi_E)\le 
\int_{\bopensetRn\times\R} \Phi^o(0,D_t\chi_F).
\end{equation}
Then for any  $\bopensetRn\in\cK(\opensetRn)$ we have 
\begin{equation}\label{fin_comp}
\int_{\bopensetRn\times\R} \Phi^o(D_x\chi_E,D_t\chi_E)\le 
\int_{\bopensetRn\times\R} \Phi^o(D_x\chi_F,D_t\chi_F).
\end{equation}
\end{lemma}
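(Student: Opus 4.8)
The plan is to put $\Phi^o$ into the normal form of Proposition~\ref{prop:part_mon} and then reduce all three integrals to a single anisotropic total variation of one $\R^2$-valued measure, on which the monotonicity of the profile of $\Phi^o$ can be used. Since $\Phi$ is partially monotone, so is $\Phi^o$, and by Proposition~\ref{prop:part_mon} together with \eqref{pmnormdual} there is a convex, positively one-homogeneous $\omega^o:[0,+\infty)\times[0,+\infty)\to[0,+\infty)$, nondecreasing in each variable as in \eqref{Omega_monotone}, with
\[
\Phi^o(\hsecdual,\xi_{n+1}^*)=\omega^o\big(\phisec^o(\hsecdual),|\xi_{n+1}^*|\big),\qquad \phisec:=\Phi_{\vert_{\{\xi_{n+1}=0\}}} .
\]
By one-homogeneity $\Phi^o(\hsecdual,0)=\omega^o(1,0)\,\phisec^o(\hsecdual)$ and $\Phi^o(0,\xi_{n+1}^*)=\omega^o(0,1)\,|\xi_{n+1}^*|$ with $\omega^o(1,0),\omega^o(0,1)>0$; hence, dividing by these positive constants, the two hypotheses say exactly that the nonnegative measures $\mu_E^1:=\phisec^o(D_x\chi_E)$ and $\mu_E^2:=|D_t\chi_E|$ have, over every cylinder $\bopensetRn\times\R$, no more mass than $\mu_F^1$ and $\mu_F^2$.

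Writing $D\chi_E=\nu_E\,|D\chi_E|$ with $\nu_E=(\widehat{\nu_E},(\nu_E)_t)$ and $a_E:=\phisec^o(\widehat{\nu_E})$, $b_E:=|(\nu_E)_t|$, the three integrands are $\omega^o(a_E,0)$, $\omega^o(0,b_E)$ and $\omega^o(a_E,b_E)$; thus each integral is an $\omega^o$-total variation of the $\R^2$-valued measure $(\mu_E^1,\mu_E^2)$, which by one-homogeneity equals $\int\omega^o(\tfrac{d\mu^1}{d\theta},\tfrac{d\mu^2}{d\theta})\,d\theta$ for any common dominating $\theta$, independently of $\theta$. I would then project by $(x,t)\mapsto x$ onto $\opensetRn$: the hypotheses give $\int_{\bopensetRn\times\R}d\mu_E^i\le\int_{\bopensetRn\times\R}d\mu_F^i$ for all open $\bopensetRn\strictlyincluded\opensetRn$, and since open sets are a determining class for Radon measures this upgrades to $\pi_\#\mu_E^i\le\pi_\#\mu_F^i$ as measures on $\opensetRn$. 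Fixing one reference $\lambda$ on $\opensetRn$ dominating the four projected measures and taking densities $\alpha_E^i:=\tfrac{d\pi_\#\mu_E^i}{d\lambda}\le\tfrac{d\pi_\#\mu_F^i}{d\lambda}=:\alpha_F^i$, the monotonicity \eqref{Omega_monotone} gives $\omega^o(\alpha_E^1,\alpha_E^2)\le\omega^o(\alpha_F^1,\alpha_F^2)$ $\lambda$-a.e., hence
\[
\int_{\bopensetRn}\omega^o(\alpha_E^1,\alpha_E^2)\,d\lambda\le\int_{\bopensetRn}\omega^o(\alpha_F^1,\alpha_F^2)\,d\lambda .
\]

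It remains to tie these projected quantities to the combined perimeters $\int_{\bopensetRn\times\R}\Phi^o(D\chi_{\,\cdot\,})$. On the side of $F$ this is the easy direction: pushforward never increases the $\omega^o$-total variation of a vector measure (test with fields of the form $\eta\circ\pi$ in the dual formulation), so $\int_{\bopensetRn}\omega^o(\alpha_F^1,\alpha_F^2)\,d\lambda\le\int_{\bopensetRn\times\R}\Phi^o(D\chi_F)$. The step I expect to be the main obstacle is the reverse, \emph{no-loss} estimate on the side of $E$, i.e. $\int_{\bopensetRn\times\R}\Phi^o(D\chi_E)\le\int_{\bopensetRn}\omega^o(\alpha_E^1,\alpha_E^2)\,d\lambda$: this is precisely where the graph structure of the minimizer enters, since projection combines the two components only subadditively when $\p^*E$ is multi-sheeted over $\opensetRn$, whereas for a subgraph the projection is one-to-one on the graph and \eqref{minP} gives the identity $\int_{\bopensetRn\times\R}\Phi^o(D\chi_{\sg(v)})=\int_{\bopensetRn}\Phi^o(-Dv,1)$, which is exactly the projected $\omega^o$-combination of the two components of $\sg(v)$. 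I would therefore prove the $E$-side identity by splitting $\p^*E$ into its vertical part (where $b_E=0$, $\omega^o(a_E,0)=\omega^o(1,0)\,a_E$, projecting onto the jump set without loss) and its single-sheeted graph-like part, treat the two contributions separately, and then chain the three displayed inequalities to conclude.
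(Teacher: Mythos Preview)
Your projection/Radon--Nikodym route is different from the paper's, which is a short duality computation: choose $\eta\in C_c^1(\bopensetRn\times\R;B_\Phi)$ nearly achieving $\int\Phi^o(D\chi_E)$, bound $\eta\cdot D\chi_E\le \phisec(\widehat\eta)\,d\phisec^o(D_x\chi_E)+|\eta_{n+1}|\,d|D_t\chi_E|$, replace $E$ by $F$ in each summand via \eqref{cond_to_meas}, and then recombine using $s_1 s_1^*+s_2 s_2^*\le\omega(s_1,s_2)\,\omega^o(s_1^*,s_2^*)\le\omega^o(s_1^*,s_2^*)$ together with \eqref{pmnormdual} to land below $\int\Phi^o(D\chi_F)$. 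No pushforwards or densities appear.

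You have, however, put your finger on a genuine issue, and it is exactly the step the paper's argument glosses over. In the paper's chain the ``replace $E$ by $F$'' step reads $\int\phisec(\widehat\eta)\,d\phisec^o(D_x\chi_E)\le\int\phisec(\widehat\eta)\,d\phisec^o(D_x\chi_F)$; since the weight $\phisec(\widehat\eta)$ depends on both $x$ and $t$, while \eqref{cond_to_meas} only compares masses over cylinders $\Bn\times\R$, this weighted inequality is not justified for an arbitrary $E$. In your language the same obstruction is the ``no-loss on the $E$-side'' inequality, which indeed points the wrong way for a multi-sheeted $E$ (pushforward can only decrease the $\omega^o$-total variation of a vector measure). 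Your proposed fix --- take $E=\sg(v)$ and use \eqref{minP} and \eqref{eq:onechecks} so that the projected $\omega^o$-combination becomes an identity rather than an inequality --- is correct, and this is the only case needed in Theorem~\ref{teo:Subgraphs}. So: neither argument proves the lemma as literally stated for arbitrary $E$, but your outline (with the subgraph hypothesis on $E$ made explicit) gives a correct proof of the version the paper actually uses, via a route that makes the role of partial monotonicity more transparent than the paper's duality shortcut.
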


\begin{proof}
We may assume that $\int_{\bopensetRn\times\R} \Phi^o(D_x\chi_F,D_t\chi_F)<+\infty.$ 
Then $\int_{\bopensetRn\times\R} \Phi^o(D_x\chi_E,D_t\chi_E)<+\infty.$
Indeed, since all norms in $\R^{n+1}$ are comparable, there exists $c,C>0$ such that 
$$c\Phi^o(\xi)\le \Phi^o(\widehat{\xi},0)+\Phi^o(0,\xi_{n+1}) \le C\Phi^o(\xi),\quad 
\xi\in\R^{n+1},$$
thus
\begin{align*}
& c\int_{\bopensetRn\times\R} \Phi^o(D_x\chi_E,D_t\chi_E)\le 
\int_{\bopensetRn\times\R} \Phi^o(D_x\chi_E,0)+
\int_{\bopensetRn\times\R} \Phi^o(0,D_t\chi_E)
\\ 
\le& \int_{\bopensetRn\times\R} \Phi^o(D_x\chi_F,0)+
\int_{\bopensetRn\times\R} \Phi^o(0,D_t\chi_F)
\le  C\int_{\bopensetRn\times\R} \Phi^o(D_x\chi_F,D_t\chi_F).
\end{align*}

By definition of $\Phi$-perimeter and Proposition \ref{prop:part_mon}, 
for any $\epsilon>0$ there exists $\eta\in C_c(\bopensetRn\times\R; B_\Phi)$ 
such that $\Phi(\eta)=\omega(\phisec(\widehat{\eta}),|\eta_{n+1}|)\le1$ and 
\begin{equation}\label{fin1_23}
 \int_{\bopensetRn\times\R} \Phi^o(D\chi_E)-\epsilon < - \int_{\bopensetRn\times\R} \div \eta dx =
\int_{\bopensetRn\times\R} \eta\cdot D\chi_E. 
\end{equation}
Then from \eqref{cond_to_meas}, \eqref{ppp}, \eqref{eq:dual_omega} and \eqref{pmnormdual}  we get 
\begin{align*}
& \int_{\bopensetRn\times\R} \eta\cdot D\chi_E  =
\int_{\bopensetRn\times\R}\left( \sum\limits_{j=1}^n \eta_j\cdot D_{x_j}\chi_E + \eta_{n+1}D_t\chi_E\right) 
\\
\le & 
\int_{\bopensetRn\times\R} \left(\phisec(\widehat{\eta})\,d\phisec^o(D_x\chi_E) +|\eta_{n+1}|\,d|D_t\chi_E|\right) 
\le 
\int_{\bopensetRn\times\R} \left(\phisec(\widehat{\eta})\,d\phisec^o(D_x\chi_F) +|\eta_{n+1}|\,d|D_t\chi_F|\right) 
\\ 
\le &\int_{\bopensetRn\times\R} \omega(\phisec(\widehat{\eta}),|\eta_{n+1}|)\, d\omega^o(\phisec^o(D_x\chi_F),|D_t\chi_F|)
\le  \int_{\bopensetRn\times\R} \omega^o(\phisec^o(D_x\chi_F),|D_t\chi_F|))
\\
=&  \int_{\bopensetRn\times\R}  \Phi^o(D\chi_F).
\end{align*}
This inequality, \eqref{fin1_23} and arbitrariness of $\epsilon$ yield \eqref{fin_comp}. 

\end{proof}

\end{document}